\documentclass[reqno, a4paper]{amsart}
\usepackage{amssymb, amscd, amsfonts,  amsthm}
\usepackage[mathscr]{eucal}
\usepackage{graphicx}
 
\newtheorem{theorem}{Theorem}[section]
\newtheorem{lemma}[theorem]{Lemma}
\newtheorem{corollary}[theorem]{Corollary}
\newtheorem{proposition}[theorem]{Proposition}

\theoremstyle{definition}
\newtheorem{definition}[theorem]{Definition}

\newtheorem{example}[theorem]{Example}
\newtheorem{problem}[theorem]{Problem}
 
 \newcommand{\restrict}{\,{\mathbin{\vert\mkern-0.3mu\grave{}}}\,}

\newcommand{\luk}{\L u\-ka\-s\-ie\-wicz}
\newcommand{\commento}[1]{}
\newcommand{\remove}[1]{}

\DeclareMathOperator{\McNn}{\mathcal M([0,1]^{\it n})}
\DeclareMathOperator{\McNtwo}{{\mathcal M}([0,1]^{\rm 2})}

\DeclareMathOperator{\McNk}{\mathcal M([0,1]^{\kappa})}
\DeclareMathOperator{\McN}{\mathcal M}
\DeclareMathOperator{\conv}{\rm conv}
\DeclareMathOperator{\den}{\rm den}

\DeclareMathOperator{\I}{[0,1]}
\DeclareMathOperator{\cube}{[0,1]^{\it n}}
\DeclareMathOperator{\aff}{\rm aff}

\DeclareMathOperator{\cl}{\rm cl}
\DeclareMathOperator{\interior}{\rm int}
\DeclareMathOperator{\relint}{\rm relint}

\DeclareMathOperator{\id}{\rm id}
\DeclareMathOperator{\ret}{\mathcal{M}}
\DeclareMathOperator{\zret}{\mathcal Z}
\DeclareMathOperator{\range}{\rm range}

\DeclareMathOperator{\gen}{\rm gen}

\DeclareMathOperator{\ind}{\iota}

\newcommand{\McNr}{\McN_{\mathbb R}}

 \title[Retractions of free MV-algebras and unital $\ell$-groups]
{Retractions of free MV-algebras and unital $\ell$-groups}


\author{Leonardo Manuel Cabrer and Daniele Mundici }
\thanks{ The first author was supported by the FWF Austrian Science Fund project START Y544-N23 (Project Leader: Prof. Agata Ciabattoni)} 

\address[L.M. Cabrer]{Department of Statistics,
Institute of Computer Languages\\ 
Technische Universit\"at Wien\\
Favoritenstrasse 9-11, 
A-1040\\ Wien \\
Austria}
\email{ leonardo.cabrer@tuwien.ac.at }

\address[D. Mundici]{Department of
Mathematics and Computer Science  ``Ulisse Dini'' \\
University of Florence\\
Viale Morgagni 67/a \\
I-50134,  Florence \\
Italy}
\email{ mundici@math.unifi.it }

\date{\today}

\begin{document}

\thanks{2000 {\it Mathematics Subject Classification.}
Primary:  06D35   Secondary:     03B50, 08B30, 52B70, 55U10, 57Q05}
\keywords{Projective algebra,
 unital abelian $\ell$-group,  MV-algebra,
 retraction, retract, rational polyhedron,  piecewise linear, 
$\mathbb Z$-map, $\mathbb Z$-homeomorphism,  
$\mathbb Z$-retract, Fibonacci sequence}

\begin{abstract} \,\,\,\,\,\,  

A number of
papers 
deal with the problem of counting the
number  
of retractions of a structure  $S$  onto a substructure  $T.$ 
In the particular case when  $S$ is a free algebra, 
this number is $\geq 1$  iff $T$ is projective.  
In this paper we consider the case 
when $T$ is a projective
 lattice-ordered abelian group with a distinguished
strong order unit,  or equivalently, a projective  MV-algebra. 
 Let  \,$A$\, be a retract of the free
$n$-generator 
MV-algebra  $\McNn$ of McNaughton functions on $\I^n$. 
We prove that the 
 number $\mathsf{r}(A)$ of retractions
 of  $\McNn$ onto $A$ is finite 
 if, and only if,   the maximal spectral space  $\mu_A$ is homeomorphic
to a (Kuratowski) closed domain $M$ of $\I^n$,  in the sense that  $M=\cl(\interior(M))$. 
 Further, the closed domain condition  is decidable and  
 $\mathsf{r}(A)$ is
 computable, once a retraction onto $A$ is 
 explicitly given. Thus every  
finitely generated projective MV-algebra  $B$  comes equipped  
with a new invariant  
$\iota(B)=\sup\{\mathsf{r}(A) \mid \mbox{$A\cong B$ 
\mbox{\,\,for\,} $A$ a retract of $\McN(\I^{k})$} \},$
 where $k$ is the smallest number of generators of $B$.
We compute  $\iota(B)$ for many projective MV-algebras $B$ 
considered  in  the literature.  Various  problems 
concerning retractions of free MV-algebras
 are shown to be decidable. 
Via the $\Gamma$ functor,
our results and computations automatically transfer to finitely generated
projective abelian $\ell$-groups with a distinguished strong unit.  
\end{abstract}

\maketitle

%
%
%
\section{Foreword}
Several papers 
deal with the problem of counting the
number $\mathsf{r}(T)$  
of {\it retractions} (= idempotent endomorphisms) 
of a structure  $S$  onto a substructure  $T \subseteq S.$
See,  e.g., 
\cite{one, usp, sch, war},  \cite[p.174]{eva}, 
\cite[p.122]{mon}.
In the particular case when  $S$ is a free algebra, 
$\mathsf{r}(T)\geq 1$ iff $T$ is projective.  

In this paper we will compute   $\mathsf{r}(T)$
when $T$ is a   
projective  MV-algebra or equivalently, a projective
{\it unital $\ell$-group}, which is short for
``lattice-ordered abelian  group with a distinguished
strong order unit''.  
As a particular case
of the equivalence $\Gamma$ established in
 \cite[Theorem 3.9]{mun-jfa}, finitely 
 presented   MV-algebras are categorically equivalent to 
finitely presented 
unital $\ell$-groups.
Further, both categories are dually equivalent to
{\it  rational polyhedra},
i.e., finite unions of simplexes with rational vertices in the same
euclidean space  $\mathbb R^n,\,\,\,n=1,2,\dots$, 
with morphisms given by  {\it $\mathbb Z$-maps},
  i.e, piecewise-linear maps $f$
with a finite number of linear pieces,  such that each linear
piece of $f$  has integer coefficients, \cite{marspa},
\cite[\S 3]{mun11}. 
The synergy between these three categories
has received increasing attention in the last few years,
\cite{cab}--\cite{carrus}, \cite{dubpov},
 \cite{fuc}, \cite{marspa}, \cite{mun11}.
 


Differently from  finitely presented $\ell$-groups, 
finitely presented  {\it unital}  $\ell$-groups,
as well as finitely presented MV-algebras  $A$ and their dual
rational polyhedra,
are endowed with  a  wealth of computable  invariants,   
such as:  the number of rational points  of
a given denominator $d=1,2,\dots$ in the maximal spectral space
$\mu_A$,  \cite[Proposition 3.15 and Theorem 4.16]{mun11};\,\, the 
  rational measure  of the $t$-dimensional
part of $\mu_A,\,\,(t=0,\ldots,\dim(\mu_A))$,\,\, \cite[\S 14]{mun11}, \cite{mun-cpc}.
None of these
 invariants makes sense for finitely presented $\ell$-groups. 
 
 A new numerical invariant,  the {\it index} $\iota(A)$,  will be
introduced in this paper, by counting the maximum number of
retractions  of a free $n$-generator algebra onto $A$, where
$n$ is the smallest number of generators of $A$.

%
Not surprisingly, the isomorphism problem for finitely presented
unital   $\ell$-groups is still open, although
Markov's celebrated  unrecognizability theorem \cite{sht} 
is to the effect  that the isomorphism problem  for finitely presented
 $\ell$-groups is undecidable, \cite{glamad}.
 
 Another main point of distinction between $\ell$-groups and
 unital $\ell$-groups is the characterization of finitely generated 
 projectives.  On the one hand, from the  Baker-Beynon duality
 \cite{bak,bey77uno,bey77due}  one easily obtains that   
  finitely generated projective $\ell$-groups  coincide with
   the finitely
presented ones. On the other hand,  
finitely generated projective unital $\ell$-groups (resp., 
finitely generated projective MV-algebras) are a proper
subclass of finitely presented  unital $\ell$-groups (resp., 
finitely presented   MV-algebras).
Their characterization is a tour de force in  
 algebraic topology,  \cite{cab, cabmun}.

In this paper we focus on 
$n$-generator projective
MV-algebras, $n=1,2,\dots,$  using their rich algebraic, geometric,
arithmetic and algorithmic structure. It is well known that
any such MV-algebra   is isomorphic to a retract $A$ of the free
MV-algebra  $\McNn$  of McNaughton functions over the
unit $n$-cube  $\I^n.$
  Let  $\mathsf{r}(A)$ denote the number  of retractions
of $\McNn$ onto $A$.  In
Theorem \ref{theorem:computable-multiplicity}
we prove that    $\mathsf{r}(A)$ is Turing computable.

Following Kuratowski,  \cite[p.20]{eng},  we say that 
a subset $D$ of a topological space $X$ is a
{\it closed domain} in $X$ if  $D$ coincides with the closure of
its interior in $X$, in symbols,  
$\cl(\interior(D))=D$.  
For any finitely generated projective MV-algebra $B$, letting $k_B$
be the smallest number of its generators,  we
 define the {\it index}  $\iota(B)$ as
the sup of all $\mathsf{r}(A)$ as $A$ ranges over retracts of $\McN([0,1]^{k_B})$
isomorphic to $B$. Then in Corollary \ref{corollary:criterion} we prove that 
 $\iota(B)$  is finite iff  the maximal ideal space of $B$
is homeomorphic to a closed domain  in $\mathbb R^{k_B}$.
Depending on $B$, $\iota(B)$ can be an arbitrarily large
finite  number, already
in the two-dimensional case,  (Theorem \ref{theorem:fibonacci}).   
  Various estimates  and computations of the multiplicity and of the index
    are carried on (respectively in \S\S 3-6  and  \S 7), and various related
 problems are shown to be Turing decidable. 

Via  the mentioned $\Gamma$ equivalence,   
the results of this paper
 automatically transfer to finitely generated 
projective unital $\ell$-groups.
Anyway,  in this paper we will mostly work in the MV-algebraic
framework, because  all the algebraic
machinery concerning finite presentations and projectives,
 (resp., all the algorithmic  machinery needed to compute invariants) 
 naturally arises from
MV-algebras  (resp., from the underlying  \luk\ calculus
of MV-algebras).
For all necessary background  on MV-algebras 
we refer to the monographs
  \cite{cigdotmun} and \cite{mun11}.

\section{Polyhedra and retracts of  free MV-algebras and unital $\ell$-groups  }
\label{section:introductory}
A {\it rational polyhedron}  $P\subseteq \mathbb R^n$  is the 
union of finitely many simplexes in $\mathbb R^n$ with rational vertices.
By a
$\mathbb Z$-{\it map}  $\zeta\colon P \to \I^m$ we mean a piecewise
linear map where each linear piece has integer coefficients, and the
number of linear pieces is finite.
(Throughout this paper the adjective
``linear'' is understood in the affine sense.)
A $\mathbb Z$-{\it homeomorphism}  $\theta$
of a rational polyhedron $P \subseteq \I^n$ onto a
rational polyhedron $Q \subseteq \I^m$ is a
$\mathbb Z$-map of $P$ onto $Q$ such that
also the inverse $\theta^{-1}$ is a $\mathbb Z$-map. 
A $\mathbb Z$-map $\sigma\colon \I^n\rightarrow  \I^n$ is
said to be  a {\it
$\mathbb Z$-retraction of $\I^n$} if it satisfies the idempotence condition  
$\sigma\circ\sigma = \sigma$. The set
$$
\mathsf{R}_\sigma = \range(\sigma)\subseteq \I^n
$$
  is
said to be  a
$\mathbb Z${\it-retract of} $\I^n$. 
$\mathsf{R}_\sigma$ is
a rational polyhedron, and we have the identity
$\mathsf{R}_\sigma=\{x\in \I^n\mid x=\sigma(x)\}.$ 

For $n=1,2,\dots,$ we let 
 $\McNn$ denote 
the MV-algebra  of   $\I$-valued $\mathbb Z$-maps
defined
over     $\cube$, equipped with the
pointwise operations of the standard MV-algebra  $\I.$
$\McNn$ is a free MV-algebra, which throughout this paper  comes equipped
with the free generating set $\{\pi_1,\ldots,\pi_n\}$, where  
$\pi_i\colon \I^n\to \I$ is   the $i$th coordinate map.
Elements of  $\McNn$ are known as {\it McNaughton functions}.

For any MV-term  $q(X_1,\ldots,X_n)$ 
we write    $\hat q\colon \I^n\to \I$ 
for  the   McNaughton function
associated to  $q$.
In the notation of  \cite[\S 3.1]{cigdotmun},
$\hat q$ is written $q^{\McNn}$.
In particular,  $\hat{X_i}$ is the $i$th coordinate function
$\pi_i\colon \I^n\to \I.$
More generally, for any  $n$-tuple  $t=(t_1,\ldots,t_n)$ of MV-terms,
where all  $t_i$ are in the same variables  $X_1,\dots,X_n$,
we let $\hat{t}$ denote the $\mathbb Z$-map 
$(\hat t_1,\ldots,\hat t_n) \colon \I^n\to \I^n.$ 

 %

%
%
%

Following  \cite{marspa}, let  $\mathcal M$ denote  
  the functor from the category of rational 
polyhedra with $\mathbb Z$-maps to finitely presented MV-algebras,
 \cite[\S 3]{mun11}, \cite{marspa}.  For any rational polyhedron
 $P\subseteq [0,1]^n$, the MV-algebra $\McN(P)$ is defined by
 restricting to $P$ every element of $ \McNn$, in symbols,  
$\McN(P)=\{f\restrict P\mid f\in \McNn\},$ where $\restrict$ denotes
restriction.
Further, the
action of $\mathcal M$ on any $\mathbb Z$-map  $\sigma$ is given by
\begin{equation}
\label{equation:functor}
\McN_\sigma=-\circ\sigma.
\end{equation}

If in particular
$\sigma\colon \I^n\to \I^n$  is a $\mathbb Z$-retraction,
  $\McN_\sigma$ is a retraction that  maps
$\McNn$ onto the
MV-subalgebra 
$\gen(\sigma_1,\ldots,\sigma_n)$  of $\McNn$
generated by $\sigma_1,\ldots,\sigma_n$.
Thus by \eqref{equation:functor},  $\McN_\sigma$ is the
uniquely determined homomorphism of $\McNn$ into $\McNn$
extending  the map  $\pi_i\mapsto \sigma_i,\,\,\,(i=1,\ldots,n).$  
%
%
%

Conversely, for   any   retraction 
 $\epsilon\colon \McNn\to \McNn$,
 the $n$-tuple $\zret_\epsilon=( \epsilon(\pi_1),
\dots,\epsilon(\pi_n))\colon \I^n\to\I^n$
 is a  $\mathbb Z$-retraction of $\I^n.$
 The range   $\mathsf{R}_{\zret_\epsilon}$ of $\zret_\epsilon$
is a rational polyhedron and coincides with the set   
$\{x\in \I^n\mid x=\zret_\epsilon(x)\}.$

It is easy to see that
  two maps  $\ret$ and $\zret$ are inverses of each other, 
\begin{equation}
\label{equation:correspondence}
\ret_{\zret_\epsilon}=\epsilon
\,\,\,\mbox{  and  }\,\,\,\zret_{\ret_\sigma}=\sigma.
\end{equation}
%

\noindent
Throughout we let  ${\rm id_X}$ denote
 the identity map on a set $X$.
 By a {\it retract} we mean the range of a retraction.

\begin{theorem}
\label{Theorem:bella}   Let $\sigma=(\sigma_1,\dots,\sigma_n)$ be
a $\mathbb Z$-retraction of  $\I^n$  onto the rational polyhedron
$\mathsf R_\sigma.$ 
Let  $\gen(\sigma_1,\dots,\sigma_n)$ be the retract of 
$\McNn$ associated with~$\sigma$. 

\begin{itemize}
\item[(a)]
The map  $\tau\mapsto \mathsf R_\tau$ 
yields a one-one correspondence
between: 
\begin{itemize}
\item[---] $\mathbb Z$-retractions $\tau=(\tau_1,\dots,\tau_n)$ of
$\I^n$  such that $\gen(\tau_1,\dots,\tau_n)$
$=\gen(\sigma_1,\dots,\sigma_n), $ and

\item[---] rational polyhedra  $Q\subseteq \I^n$ such that
$\sigma\restrict Q\colon Q\cong_{\mathbb Z} \mathsf R_\sigma$.
\end{itemize}

\medskip
\item[(b)]
Thus there exists  a
one-one correspondence
between:
\begin{itemize}

\item[---] retractions of $\McNn$ onto the MV-algebra 
$\gen(\sigma_1,\dots,\sigma_n)$.

\item[---] rational polyhedra  $Q\subseteq \I^n$ such that
$\sigma\restrict Q\colon Q\cong_{\mathbb Z} \mathsf R_\sigma$, and

\end{itemize}
\end{itemize}
\end{theorem}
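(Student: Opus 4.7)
The plan is to deduce (b) immediately from (a) via the bijection \eqref{equation:correspondence}. Under $\epsilon\mapsto\zret_\epsilon$ the range of a retraction $\epsilon$ of $\McNn$ is exactly $\gen((\zret_\epsilon)_1,\dots,(\zret_\epsilon)_n)$, since $\epsilon$ is the unique endomorphism sending $\pi_i\mapsto(\zret_\epsilon)_i$. Hence retractions of $\McNn$ onto $\gen(\sigma_1,\dots,\sigma_n)$ correspond bijectively with $\mathbb Z$-retractions $\tau$ satisfying $\gen(\tau_1,\dots,\tau_n)=\gen(\sigma_1,\dots,\sigma_n)$, which by (a) correspond bijectively with the polyhedra $Q$ described in the statement.

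For (a), the key tool is that the hypothesis $\gen(\tau_1,\dots,\tau_n)=\gen(\sigma_1,\dots,\sigma_n)$ is equivalent to the existence of two $\mathbb Z$-maps $\hat q,\hat p\colon\I^n\to\I^n$ with $\sigma=\hat q\circ\tau$ and $\tau=\hat p\circ\sigma$: each $\sigma_i$ (resp.\ $\tau_i$) is the McNaughton function associated with an MV-term in $\tau_1,\dots,\tau_n$ (resp.\ $\sigma_1,\dots,\sigma_n$). On $\mathsf R_\tau$ one then has $\sigma(x)=\hat q(\tau(x))=\hat q(x)\in\mathsf R_\sigma$. Injectivity of $\sigma\restrict\mathsf R_\tau$ is immediate from $\sigma(x_1)=\sigma(x_2)\Rightarrow x_i=\tau(x_i)=\hat p(\sigma(x_i))\Rightarrow x_1=x_2$, and surjectivity follows because for $y\in\mathsf R_\sigma$ the point $\tau(y)\in\mathsf R_\tau$ satisfies $\sigma(\tau(y))=\hat q(\tau(\tau(y)))=\hat q(\tau(y))=\sigma(y)=y$. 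By symmetry $\tau\restrict\mathsf R_\sigma$ is the $\mathbb Z$-inverse, yielding $\sigma\restrict\mathsf R_\tau\colon\mathsf R_\tau\cong_{\mathbb Z}\mathsf R_\sigma$.

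For the converse, let $Q\subseteq\I^n$ satisfy $\sigma\restrict Q\colon Q\cong_{\mathbb Z}\mathsf R_\sigma$, with $\mathbb Z$-inverse $\rho\colon\mathsf R_\sigma\to Q$. By the standard extension property for $\mathbb Z$-maps between rational polyhedra (each component of $\rho$ belongs to $\McN(\mathsf R_\sigma)$ and hence is the restriction of a McNaughton function on $\I^n$; see \cite{marspa}, \cite[\S 3]{mun11}), pick a $\mathbb Z$-map $\tilde\rho\colon\I^n\to\I^n$ agreeing with $\rho$ on $\mathsf R_\sigma$, and define $\tau=\tilde\rho\circ\sigma$. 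Then $\tau$ takes values in $Q$ and fixes every point of $Q$ (since $\tilde\rho=\rho$ on $\mathsf R_\sigma\supseteq\sigma(Q)$ and $\rho$ inverts $\sigma\restrict Q$), so $\tau$ is an idempotent $\mathbb Z$-retraction with $\mathsf R_\tau=Q$. The identity $\sigma\circ\tau=\sigma$ places each $\sigma_i$ in $\gen(\tau_1,\dots,\tau_n)$, while $\tau_i=\tilde\rho_i\circ\sigma$ exhibits $\tau_i\in\gen(\sigma_1,\dots,\sigma_n)$, so the two generated subalgebras coincide. For uniqueness, any $\mathbb Z$-retraction $\tau'$ with $\mathsf R_{\tau'}=Q$ and the same generated subalgebra satisfies $\sigma\circ\tau'=\sigma$ by the forward direction; since $\tau'(x)\in Q$, applying $\rho$ gives $\tau'(x)=\rho(\sigma(\tau'(x)))=\rho(\sigma(x))=\tau(x)$.

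The principal subtlety is the extension of $\rho$ from $\mathsf R_\sigma$ to a $\mathbb Z$-map on $\I^n$, which is standard in MV-algebraic duality and was the technical content of \cite{marspa}; the rest of the argument reduces to a direct verification of the retraction axioms and the algebraic translation between $\gen(\tau_i)=\gen(\sigma_i)$ and the composition identities $\sigma=\hat q\circ\tau$, $\tau=\hat p\circ\sigma$.
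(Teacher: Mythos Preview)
Your proof is correct and follows essentially the same route as the paper's: both directions of (a) hinge on translating $\gen(\tau_i)=\gen(\sigma_i)$ into composition identities $\sigma=\hat q\circ\tau$, $\tau=\hat p\circ\sigma$, then using the extension of the $\mathbb Z$-inverse $(\sigma\restrict Q)^{-1}$ to all of $\I^n$ (the paper cites \cite[Proposition~3.2]{mun11} rather than \cite{marspa}) to manufacture the retraction $\tau=\tilde\rho\circ\sigma$; your uniqueness argument via $\tau'(x)=\rho(\sigma(\tau'(x)))=\rho(\sigma(x))=\tau(x)$ is a clean rephrasing of the paper's verification that $\tau\mapsto\mathsf R_\tau$ and $Q\mapsto\zeta\circ\sigma$ are mutually inverse, using the same identity $(\sigma\restrict Q)^{-1}=\tau\restrict\mathsf R_\sigma$. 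Part~(b) is handled identically in both.
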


\begin{proof} (a)  Let $\tau\colon \I^n\to\I^n$ be a 
$\mathbb Z$-retraction satisfying the condition 
$$\gen(\tau_1,\dots,\tau_n)=\gen(\sigma_1,\dots,\sigma_n).$$
 Then there are MV-terms $t_1,\ldots,t_n$ and $
 s_1,\ldots,s_n$ such that $\tau_i=t_i(\sigma_1,\ldots,\sigma_n)$ 
 and $\sigma_i=s_i(\tau_1,\ldots,\tau_n)$.
Hence  $\hat{t}=(\hat{t}_1,\ldots,\hat{t}_n)$ and 
$\hat{s}=(\hat{s}_1,\ldots,\hat{s}_n) \colon \I^n\to \I^n$
 are $\mathbb Z$-maps satisfying   
\begin{equation}
\label{equation:alpha-new}
\sigma=\hat{s}\circ\tau\mbox{ and }\tau=\hat{t}\circ\sigma.
\end{equation}

\medskip
\noindent {\it Claim.}
$\sigma\restrict \mathsf{R}_\tau$ is a
$\mathbb Z$-homeomorphism
onto  $\mathsf{R}_\sigma$
satisfying the identity 
\begin{equation}\label{eq:inverse}
(\sigma\restrict \mathsf{R}_\tau)^{-1}=\tau\restrict \mathsf{R}_\sigma.
\end{equation}

As a matter of fact, let us pick
an arbitrary   $x\in \mathsf{R}_\sigma$.
The identities   $(\sigma\circ\tau)(x) =(\hat{s}\circ\tau\circ\tau)(x)
=(\hat{s}\circ \tau)(x)=\sigma(x)=x$
show that $\sigma\restrict \mathsf{R}_\tau$ is onto 
$\mathsf{R}_\sigma$. Similarly, for all
  $y\in \mathsf{R}_\tau$
  we have
  $(\tau\circ\sigma)(y)=y$.
  It follows  that $\sigma\restrict \mathsf{R}_\tau $ is one-one.
The identity \eqref{eq:inverse} is now immediate, and the
claim is proved.
 
\smallskip
To complete the proof of (a), let us assume that, conversely, 
 $Q\subseteq \I^n$ is a rational polyhedron such that 
 $\sigma\restrict Q\colon Q\cong_{\mathbb Z} \mathsf R_\sigma$.
Let us write $\zeta=(\zeta_1,\dots,\zeta_n)$ 
as an abbreviation of the $\mathbb Z$-homeomorphism 
$(\sigma\restrict Q)^{-1}$ of $\mathsf R_\sigma$ onto $Q$,
$$
\zeta=(\sigma\restrict Q)^{-1}\colon
\mathsf R_\sigma\cong_{\mathbb Z} Q.
$$
Observe  that  $\zeta$ is piecewise linear with integer coefficients,
and is defined over the rational polyhedron  $\mathsf R_\sigma$.
For short,  $\zeta$ is a $\mathbb Z$-map on  $\mathsf R_\sigma\subseteq \I^n$.  
So by  \cite[Proposition 3.2]{mun11}  
we have  a $\mathbb Z$-map   $\overline{\zeta}\colon\I^n\to\I^n$ extending $\zeta.$
By McNaughton theorem, \cite[Theorem 9.1.5]{cigdotmun}, 
 for each  $i+1,\dots,n$,\,\,
  $\overline{\zeta_i}$ is the McNaughton function of some
$n$-variable MV-term.
The composite map $ \rho=\zeta\circ\sigma$
is  a $\mathbb Z$-retraction    of  $\I^n$ onto $Q$, because 
  $\zeta\circ\sigma\circ\zeta\circ\sigma=
\zeta\circ\, {\rm id}_{\mathsf{R}_\sigma}\circ\sigma=\zeta\circ\sigma.$
Since $\mathsf{R}_\sigma$    is a $\mathbb Z$-retract of $\I^n$ then so
is the rational polyhedron $Q=\zeta(\mathsf{R}_\sigma)$.  
From $ \rho=\zeta\circ\sigma$ we get 
$\gen( \rho_1,\dots, \rho_n)\subseteq \gen( \sigma_1,\dots, \sigma_n)$.
From 
$\sigma=\zeta^{-1}\circ \rho=\sigma\circ\rho$ 
we get $\gen( \sigma_1,\dots, \sigma_n)\subseteq \gen( \rho_1,\dots, \rho_n)$. 
Further, by \eqref{equation:alpha-new} and \eqref{eq:inverse} we can write 
$
\zeta\circ \sigma=
\tau\restrict R_{\sigma}\circ\sigma=\tau\circ\sigma=
\hat{t}\circ\sigma\circ\sigma
=\hat{t}\circ\sigma=\tau,
$
and 
$
R_{\zeta\circ \sigma}=\zeta
\circ \sigma(\I_n)
=\zeta(\mathsf{R}_{\sigma})=Q.
$
Thus
 the maps $\tau\mapsto \mathsf{R}_{\tau}$ and 
 $Q\mapsto\zeta\circ \sigma$ are inverse of each other,
 and (a) is proved.

\medskip
(b)  This immediately follows from (a) and  \eqref{equation:correspondence}.  
\end{proof}

For the proof of  
 Theorem~\ref{theorem:finite} below,
  we record the following elementary fact: 
 
\begin{lemma}
\label{Lem:SeparationConComp}
Let $\eta\colon\I^n\to \I^n$ be a $\mathbb Z$-map and $P,Q\subseteq \I^n$ 
be rational polyhedra satisfying the following conditions:

\smallskip
\begin{itemize}
\item[(i)] both $\interior(P)$ and $\interior(Q)$ are connected;

\smallskip
\item[(ii)] $P=\cl (\interior(P))$ and $Q=\cl (\interior(Q))$;

\smallskip
\item[(iii)] $\eta(P)=\eta(Q)$;

\smallskip
\item[(iv)] $\eta\restrict P\colon P\cong_{\mathbb Z}\eta(P)$\,\, 
and \,\,$\eta\restrict Q\colon Q\cong_{\mathbb Z}\eta(Q)$.
\end{itemize} 
Then either $P=Q$ or ${\rm int}(P)\cap{\rm int}(Q)=\emptyset$.
\end{lemma}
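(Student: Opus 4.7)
The plan is to exploit the composite
\[
\psi:=(\eta\restrict P)^{-1}\circ(\eta\restrict Q)\colon Q\to P,
\]
which is a $\mathbb Z$-homeomorphism by (iii) and (iv). By construction $\psi(x)$ is the unique element of $P$ that $\eta$ sends to $\eta(x)$, so for $x\in Q$ one has $\psi(x)=x$ iff $x\in P$. Consequently the fixed-point set of $\psi$ inside $Q$ equals $P\cap Q$, and in particular $\psi$ restricts to the identity on $P\cap Q$.

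Assuming $\interior(P)\cap\interior(Q)\neq\emptyset$ (the only nontrivial case), I would examine
\[
F\;=\;\{x\in\interior(Q):\psi(x)=x\}\;=\;\interior(Q)\cap P,
\]
which is closed in $\interior(Q)$ as the trace of the fixed-point set of $\psi$. Invariance of domain applied to the PL homeomorphism $\psi$ yields $\psi(\interior(Q))\subseteq\interior(P)$; since $\psi(x)=x$ on $F$, the elements of $F$ then automatically lie in $\interior(P)$, giving the sharper description $F=\interior(P)\cap\interior(Q)$. This is manifestly an open subset of $\I^n$, so $F$ is also open in $\interior(Q)$. By (i) the set $\interior(Q)$ is connected; a nonempty clopen subset must be the whole space, so $F=\interior(Q)$ and hence $\interior(Q)\subseteq\interior(P)$. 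Taking closures and using (ii) yields $Q=\cl(\interior(Q))\subseteq\cl(\interior(P))=P$; the symmetric argument with the roles of $P$ and $Q$ swapped gives $P\subseteq Q$, whence $P=Q$.

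The main obstacle I foresee is the invariance-of-domain step $\psi(\interior(Q))\subseteq\interior(P)$. For interior points of $Q$ that lie in the open cube $(0,1)^n$ this is Brouwer's classical theorem applied to the continuous injection $\psi$ on a small Euclidean ball. For interior points on $\partial\I^n$ the classical statement does not apply directly, but the PL structure of $\psi$ rescues the argument: on the top-dimensional simplexes of a common triangulation $\psi$ acts as an affine map with nonsingular integer linear part (by injectivity), and such a piece carries $\I^n$-open neighborhoods of a point $x$ in $Q$ onto $\I^n$-open neighborhoods of $\psi(x)$ inside $P$.
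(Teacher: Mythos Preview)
Your overall strategy is sound and genuinely different from the paper's. The paper argues by contradiction using a path: assuming $P\neq Q$ and $\interior(P)\cap\interior(Q)\neq\emptyset$, it joins a point of $\interior(P)\cap\interior(Q)$ to a point of $\interior(P)\setminus Q$ by a path inside $\interior(P)$, and picks the last point $z$ of the path lying in $Q$; then $z\in\interior(P)\cap(Q\setminus\interior(Q))$, and applying $\eta$ gives a point simultaneously in $\interior(\eta(P))$ and in the boundary of $\eta(Q)=\eta(P)$, a contradiction. Your clopen argument via the auxiliary homeomorphism $\psi=(\eta\restrict P)^{-1}\circ(\eta\restrict Q)$ is cleaner and more conceptual; both routes, however, hinge on the same invariance-of-domain step, which the paper simply asserts.

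Your justification of that step for points on $\partial\I^n$ does not hold as written. The claim that a single affine piece with nonsingular integer linear part carries $\I^n$-open neighborhoods to $\I^n$-open neighborhoods is false: for instance, the map $(x,y)\mapsto(x+y,y)$ sends the $\I^2$-open corner $\{(x,y)\in\I^2:x+y<\varepsilon\}$ onto the wedge $\{(u,v)\in\I^2:0\le v\le u<\varepsilon\}$, which is not open in $\I^2$ (the diagonal points are boundary points). So the PL structure alone does not rescue the boundary case.

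The clean fix is to replace $\interior_{\I^n}$ by the Euclidean interior $\interior_{\mathbb R^n}$ throughout your argument. Hypotheses (i) and (ii) transfer: for $P\subseteq\I^n$ one has $\cl(\interior_{\mathbb R^n}P)=\cl(\interior_{\I^n}P)$, and $\interior_{\mathbb R^n}Q=\interior_{\I^n}Q\cap(0,1)^n$ is still connected (it is the manifold interior of the connected open subset $\interior_{\I^n}Q$ of the manifold-with-corners $\I^n$; locally the intersection with $(0,1)^n$ is connected, and a dense open subset with locally connected trace in a connected space is connected). With Euclidean interiors, every point of $\interior_{\mathbb R^n}Q$ lies in $(0,1)^n$, so classical Brouwer invariance of domain applies directly to $\psi$, and your clopen argument goes through verbatim to give $\interior_{\mathbb R^n}Q\subseteq\interior_{\mathbb R^n}P$, hence $Q\subseteq P$ by (ii), and $P=Q$ by symmetry.
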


 \begin{proof}
By way of contradiction, let us assume
 $P\neq Q$ and there is $x\in \interior(P)\cap\interior(Q).$   
Without loss of generality assume that $y\in P\setminus Q$ for some $y$. 
By (ii),   $P=\cl (\interior(P))$, whence we may insist that  $y\in \interior(P)$.
  Since by (i) the interior of $P$   is an open  connected 
  subset of  $\mathbb R^n$,  it is also path connected.
  (See Figure \ref{figure:boundary}.) 
  Let   $\gamma\colon\I\to P$ be a path
  such that $\gamma(\I)\subseteq \interior(P)$, $\gamma(0)=x$, and $\gamma(1)=y$.  
Since $\gamma$ is continuous and $Q$ is closed, 
 the set $J=\{\delta\in\I\mid \gamma(\delta)\in Q\}\subseteq \I$ is closed.
Let  $\lambda$ be the largest element of $J$. 
From $\gamma(1)=y\notin Q$ we get $\lambda<1$. 
Let $z=\gamma(\lambda)$. Then $z\in \interior(P)$ and  $z\in Q\setminus \interior(Q)$.
\commento{L to D: THis follows only from (iv)}
By (iv), 
$\eta$ maps  $z$ to a point  $\eta(z)$ that simultaneously
belongs to the interior of  $\eta(P)$ 
and  
 to the boundary of  $\eta(Q)$, 
\commento{L to D: Is it worth to be mor precise here?}
which contradicts (iii).
\end{proof}

\begin{figure} 
    \begin{center}                                     
    \includegraphics[height=8.0cm]{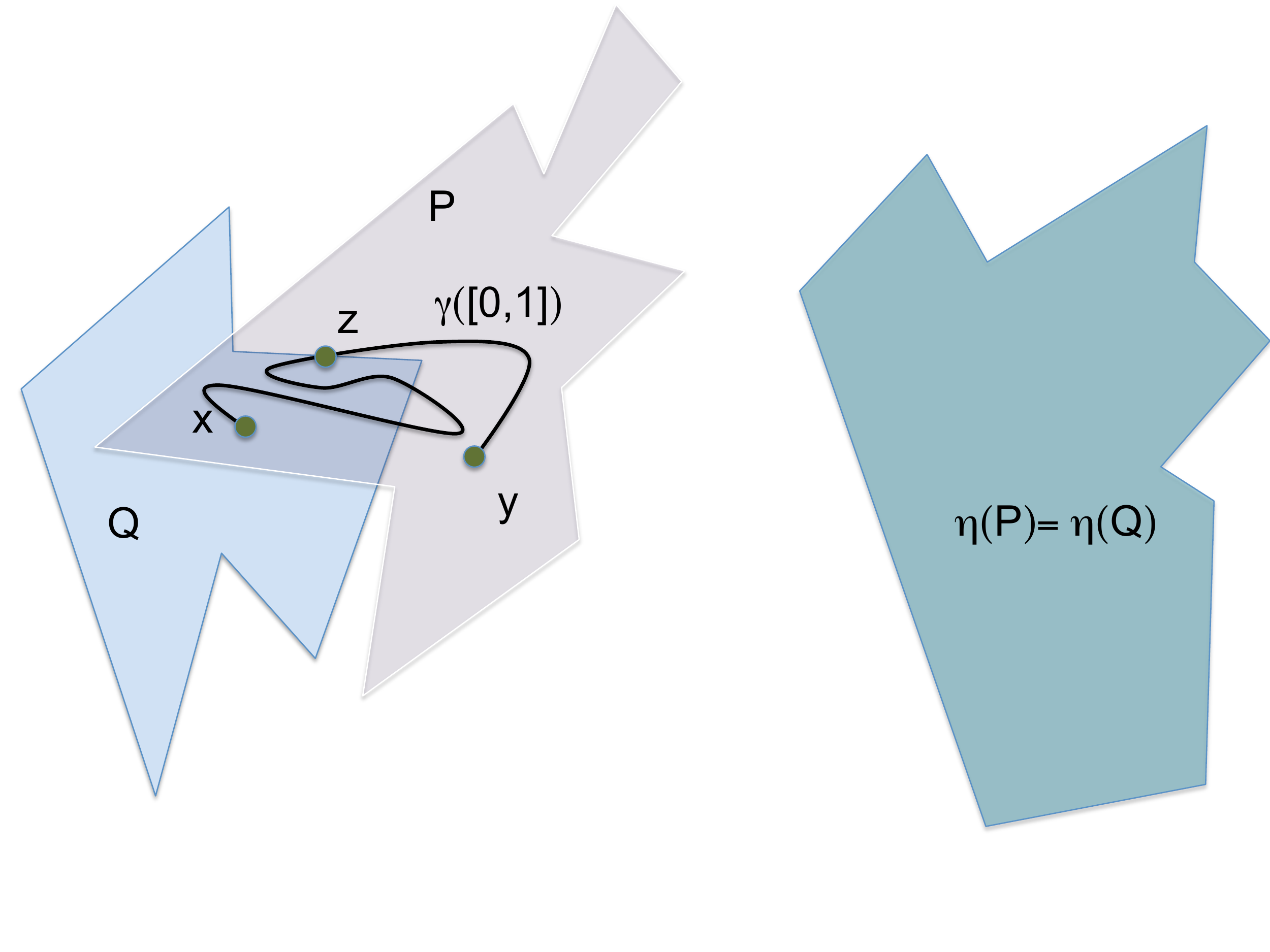}    
    \end{center}                                       
 \caption{\small   The path $\gamma\colon[0,1]\to \interior(P)$
 in the proof of Lemma \ref{Lem:SeparationConComp}
joins 
 $x\in\interior(P)\cap\interior(Q)$ and
 $y\in P\setminus Q$, and  has a nonempty intersection with
 the boundary of  $Q$.}  
    \label{figure:boundary}                                                 
   \end{figure}

 Up to isomorphism, any $n$-generator projective MV-algebra $B$  has the
form $\McN(P)=\{f\restrict P\mid f\in \McNn\}$ for  some  $\mathbb Z$-retract $P$ of  $\I^n$.   
Specifically,  by
 \cite[Theorem 5.1]{cabmun} or   \cite[Proposition 17.5]{mun11},  there is
a $\mathbb Z$-retraction $\sigma$ of $\I^n$
 such that $B\cong\McN(\mathsf R_\sigma)\cong
 \range(\mathcal M_\sigma)=\range(-\circ\sigma)$.
By \cite[Corollary 4.18]{mun11},  
the $\mathbb Z$-retract $Q=\mathsf R_\sigma$ is homeomorphic to the
maximal spectral space  $\mu(B).$  If another
$\mathbb Z$-retract $Q'$ of $\I^n$ is chosen such that
$B\cong\McN(Q')$, then $Q$ is $\mathbb Z$-homeomorphic to $Q'$
(\cite[Corollary 3.10]{mun11}). Thus in particular  $Q$ is a 
closed domain 
in $\I^n$ iff so is  $Q'.$  (See \cite[p.20]{eng} for this terminology,
going back to Kuratowski.) This state of affairs can be
unambiguously described by saying that the maximal spectral
space $\mu_B$ is a closed domain in $\I^n$.

\begin{theorem}
\label{theorem:finite}
Suppose  $A$ is a
 retract  of $\McNn$  and  $\mu_A$ is a closed domain
in $\I^n.$ 
Then the number of retractions of $\McNn$ onto $A$
is finite.  
\end{theorem}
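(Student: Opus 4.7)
The plan is to use Theorem~\ref{Theorem:bella}(b) to reduce the problem to showing that the set of rational polyhedra $Q\subseteq\I^n$ satisfying $\sigma\restrict Q\colon Q\cong_{\mathbb{Z}}P$ is finite, where $\sigma$ is any fixed $\mathbb{Z}$-retraction realizing $A=\gen(\sigma_1,\dots,\sigma_n)$ and $P:=\mathsf R_\sigma$. Since $\mu_A$ is a closed domain in $\I^n$ and this property is independent of the chosen $\mathbb{Z}$-retract representative, we may take $P$ itself to be a closed domain; in particular $\dim P=n$ and $P$ equals the union of the $n$-dimensional simplices in any rational triangulation of $P$.

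I would then fix rational simplicial complexes $\Delta,\Delta'$ triangulating $\I^n$ so that $\sigma\colon|\Delta|\to|\Delta'|$ is simplicial and $P$ is the underlying space of a subcomplex $\Delta'_P\subseteq\Delta'$; this is a standard construction for $\mathbb{Z}$-maps after sufficiently fine rational subdivision. For each $n$-dimensional simplex $T$ of $\Delta'_P$, the set
\[\mathcal S(T):=\{S\in\Delta\colon\dim S=n,\ \sigma(S)=T\}\]
is finite and contains $T$ itself (because $\sigma$ fixes $P$). Simpliciality then yields the key fibre identity
\[\sigma^{-1}(\interior(T))=\bigsqcup_{S\in\mathcal S(T)}\interior(S),\]
which I would verify as follows: if $x\in\relint(S')$ with $\sigma(x)\in\relint(T)$, then the $\Delta'$-simplex $\sigma(S')$ contains the point $\sigma(x)\in\relint(T)$, forcing $T\subseteq\sigma(S')$ and hence $\sigma(S')=T$ by top-dimensionality of $T$, so $S'\in\mathcal S(T)$.

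Given any valid $Q$, let $\zeta:=(\sigma\restrict Q)^{-1}\colon P\to\I^n$. For each top-dimensional $T$ of $\Delta'_P$, the continuous image $\zeta(\interior(T))$ is connected and contained in the disjoint union of open sets $\bigsqcup_{S\in\mathcal S(T)}\interior(S)$, so it lies entirely in a single $\interior(S_Q(T))$ for a uniquely determined $S_Q(T)\in\mathcal S(T)$; by continuity $\zeta\restrict T=(\sigma\restrict S_Q(T))^{-1}$ and hence $\zeta(T)=S_Q(T)$. The closed-domain hypothesis gives $P=\bigcup\{T\colon T\text{ top-dimensional in }\Delta'_P\}$, so $Q=\zeta(P)=\bigcup_T S_Q(T)$ is entirely determined by the finite assignment $T\mapsto S_Q(T)$, and the number of valid $Q$'s is at most $\prod_T|\mathcal S(T)|<\infty$.

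The main obstacle I anticipate is the standard-but-fiddly PL bookkeeping of the triangulation step, simultaneously making $\sigma$ simplicial and retaining $P$ as a subcomplex; once this is set up, the finiteness follows from the elementary connectivity argument. As an alternative route, Lemma~\ref{Lem:SeparationConComp} (applied after splitting $P$ into pieces with connected interior) forces distinct valid $Q$'s to have pairwise disjoint $\I^n$-interiors, and combined with the observation that $\mathbb{Z}$-homeomorphisms act by unimodular affine maps on each piece and so preserve $n$-dimensional Lebesgue measure, it also yields finiteness via a volume bound on each component.
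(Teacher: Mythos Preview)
Your primary argument is correct and takes a genuinely different route from the paper. The paper proceeds exactly along your ``alternative route'': it splits $\mathsf R_\sigma$ into the finitely many connected components $O_{\sigma,1},\dots,O_{\sigma,k}$ of its interior, uses Lemma~\ref{Lem:SeparationConComp} to show that the corresponding components of different admissible $Q$'s are pairwise disjoint, and then invokes the fact that $\mathbb Z$-homeomorphisms preserve $n$-dimensional Lebesgue measure to get a volume bound $\mathsf r(A)\le\binom{\lfloor 1/\lambda\rfloor}{k}$, where $\lambda$ is the smallest component volume.

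Your simplicial approach trades that measure-theoretic step for a purely combinatorial one: once $\sigma$ is made simplicial with $P$ a subcomplex, the fibre decomposition $\sigma^{-1}(\interior T)=\bigsqcup_{S\in\mathcal S(T)}\interior S$ and connectedness of $\zeta(\interior T)$ pin down $Q$ by a finite choice function $T\mapsto S_Q(T)$, giving the bound $\prod_T|\mathcal S(T)|$. This avoids Lemma~\ref{Lem:SeparationConComp} and any appeal to Lebesgue measure, at the cost of the triangulation bookkeeping you flag (which is indeed routine in the PL category). It is worth noting that your argument is very close in spirit to the paper's later proof of Theorem~\ref{theorem:computable-multiplicity}, where a linearizing triangulation is used to show that each admissible $Q$ is a union of simplices from a fixed finite complex; so your approach effectively anticipates that computability argument and extracts finiteness from it directly. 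The paper's proof, by contrast, yields an explicit volume-based upper bound that is reused elsewhere (Proposition~\ref{proposition:raffinanda}(b)).
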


\begin{proof} 
Let us choose  a 
retraction  $\epsilon$  of $\McNn$ onto $A$, along with its associated  
$\mathbb Z$-retraction  $\mathcal Z_\epsilon=\sigma$
as given by 
\eqref{equation:correspondence}.
Since $\mathsf{R}_\sigma$ is a polyhedron (it is compact and) 
the connected components of $\interior(\mathsf{R}_\sigma)\subseteq \I^n$ are finitely many. Let  
$O_{\sigma,1},\ldots,O_{\sigma,k}\subseteq \interior(\mathsf{R}_\sigma)\subseteq \I^n$ 
be the list of these connected components. 

With reference to the notation
\eqref{equation:functor} for the functor $\mathcal M$, 
let $\zeta$ be a $\mathbb Z$-retraction of  $\I^n$  such that
$\McN_\zeta$ is a retraction of $\McNn$ onto $A$.   
By Theorem~\ref{Theorem:bella},
 $\sigma\restrict\mathsf{R}_\zeta$  is $\mathbb Z$-homeomorphism onto $\mathsf{R}_\sigma$. 
Therefore,  $\interior(\mathsf{R}_\zeta)$ has $k$ connected components
$O_{\zeta,1},\ldots,O_{\zeta,k}$, and we can write 
$\sigma(O_{\tau,j})=O_{\sigma,j}.$
We have  $\mathbb Z$-homeomorphisms
\begin{equation}\label{eq:restrictZhomeo}
\sigma\restrict\cl(O_{\zeta,j}) \colon \cl(O_{\zeta,j}) \to \cl(O_{\sigma,j}),\,\,\,(j=1,\dots,k). 
\end{equation}

Let the family
$\mathcal O$ of open sets in $\I^n$ be defined by
$$\mathcal O =\{O_{\zeta,j} \mid j=1,\dots,k,
 \mbox{ and the map $ -\circ\zeta$ is  a retraction
of $\McNn$ onto $A$}\}.$$
Let $O_{\zeta,j},O_{\zeta',j'}\in\mathcal O$. If $O_{\zeta,j}\neq O_{\zeta',j'}$,
 then either $j\neq j'$ or $\zeta\neq \zeta'$. If $j\neq j'$,
  then $\sigma (O_{\zeta,j})\cap\sigma(O_{\zeta',j'})=\emptyset$, 
whence $O_{\zeta,j}\cap O_{\zeta',j'}=\emptyset$. If $j=j'$, then  $\zeta\neq \zeta'$. 
From Lemma~\ref{Lem:SeparationConComp}, 
(with $\eta=\sigma$, 
$P=\cl(O_{\zeta,j})$, 
and $Q=\cl(O_{\zeta',j'})$) it follows that
 $O_{\zeta,j}\cap O_{\zeta',j'}=\emptyset$. Therefore, the elements of $\mathcal O$ are
 pairwise  disjoint.

Since $\mathbb Z$-homeomorphisms preserve
the Lebesgue measure of $n$-dimensional  polyhedra in $\I^n$ 
(\cite[Lemma 14.3]{mun11}, \cite[Theorem 2.1(iii)]{mun-dcds}), by \eqref{eq:restrictZhomeo} each $O_{\zeta,j}\in\mathcal O$   
 has the same ($n$-dimensional)  Lebesgue measure  
as $O_{\sigma,j}$,
because $O_{\sigma,j}$ has the same Lebesgue measure as $\cl(O_{\zeta,j})$. 
Let $O_{\sigma,j}$ be chosen among $O_{\sigma,1},\ldots,O_{\sigma,k}$
as having the smallest $n$-dimensional
Lebesgue measure. Say that $\lambda$ is its measure. 
Since  the elements of $\mathcal O$ are pairwise disjoint, we have
\begin{equation}
\label{equation:uniform-bound}
\mbox{number of elements in }\mathcal O\leq \lfloor {1}/{\lambda}\rfloor
=\max\{l\in\mathbb Z\mid l\leq {1}/{\lambda}\}.
\end{equation}
By Theorem~\ref{Theorem:bella},  the number $\mathsf{r}(A)$ of 
retractions of  $\I^n$ onto~$A$ satisfies the inequality
\begin{equation}
\label{equation:upperbound}
\mathsf{r}(A) \leq \binom{\lfloor 1/\lambda\rfloor}{k}. 
\end{equation}
This completes the proof. 
\end{proof}

\medskip

Throughout we let   $\McNr(\I^n)$ denote the unital $\ell$-group of
piecewise linear  functions $f\colon \I^n\to \mathbb R$,  where each linear
piece of $f$ has integer coefficients.  In view of
 the categorical equivalence  $\Gamma$ between unital $\ell$-groups and
MV-algebras, \cite[Theorem 3.9]{mun-jfa},
such notions as ``free unital $\ell$-group'' and ``finitely presented
$\ell$-group'' make perfect sense, not only as the $\Gamma$-correspondents
of free and finitely presented MV-algebras, but also from the categorical viewpoint, 
(respectively see \cite[Corollary 4.16]{mun-jfa} and
\cite[Remark 5.10]{carrus}.) 

 The maximal spectral space
$\mu_G$
of every unital $\ell$-group $(G,u)$
 is canonically homeomorphic to the maximal spectral space
of  its associated MV-algebra  $\Gamma(G,u),$
\,\,\cite[\S 7.2]{cigdotmun}.  Precisely as in the case of
MV-algebras,  it makes perfect 
mathematical sense to say  that  $\mu_G$  is a closed
domain in $\I^n$.

By   \cite[Theorem 4.15]{mun-jfa}, 
$\Gamma(\McNr(\I^n))=\McNn$. Thus
by \cite[\S 7.2]{cigdotmun}, up to unital $\ell$-isomorphism 
every finitely generated projective unital $\ell$-group  has the
form $\McNr(P)=\{f\restrict P\mid f\in \McNr(\I^n)\}$ for some  $n=1,2,\dots$ and
    $\mathbb Z$-retract $P$ of 
$\I^n$.

\smallskip


\begin{corollary}
\label{theorem:finite-bis}
Given a retract $(G,u)$ of $\McNr(\I^n)$,  suppose $\mu_G$ is a closed domain
in $\I^n.$ 
Then the number of retractions of $\McNr(\I^n)$ onto $(G,u)$
is finite.  
\end{corollary}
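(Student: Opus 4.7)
The plan is to reduce the statement to Theorem~\ref{theorem:finite} via the categorical equivalence $\Gamma$ between unital $\ell$-groups and MV-algebras. Being an equivalence of categories, $\Gamma$ preserves and reflects retractions: a morphism $\epsilon\colon (G,u)\to(G,u)$ is an idempotent endomorphism whose range is $(G,u)$ itself iff $\Gamma(\epsilon)\colon\Gamma(G,u)\to\Gamma(G,u)$ is an idempotent endomorphism whose range is $\Gamma(G,u)$. Hence the assignment $\epsilon\mapsto\Gamma(\epsilon)$ yields a bijection between retractions of $\McNr(\I^n)$ onto $(G,u)$ and retractions of $\Gamma(\McNr(\I^n))=\McNn$ onto $\Gamma(G,u)$.

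Next, I would invoke the fact, recalled in the paragraphs preceding the corollary, that $\mu_G$ is canonically homeomorphic to $\mu_{\Gamma(G,u)}$, the homeomorphism being natural with respect to $\Gamma$. In particular, since $(G,u)$ is a retract of $\McNr(\I^n)$, the MV-algebra $A=\Gamma(G,u)$ is a retract of $\McNn$, and its maximal spectral space $\mu_A$ sits inside $\I^n$ through the same embedding (up to the canonical homeomorphism) as $\mu_G$. Therefore the closed-domain hypothesis transfers verbatim: $\mu_G$ is a closed domain in $\I^n$ iff $\mu_A$ is a closed domain in $\I^n$.

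Combining these two observations, the hypotheses of Theorem~\ref{theorem:finite} are satisfied by $A=\Gamma(G,u)$, so $\mathsf{r}(A)$ is finite. By the bijection between retractions induced by $\Gamma$, the number of retractions of $\McNr(\I^n)$ onto $(G,u)$ equals $\mathsf{r}(A)$, which is finite. This completes the proof.

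There is no real obstacle here: the entire content of the corollary is that the $\Gamma$ functor makes all relevant structure (retractions, ranges, and maximal spectral spaces) transport faithfully between the two categories, and each of these facts is already recorded in the excerpt. The only point that requires a moment's care is verifying that the canonical homeomorphism $\mu_G\cong\mu_{\Gamma(G,u)}$ respects the ambient identification with the $\mathbb Z$-retract of $\I^n$ that represents the retract, so that ``closed domain in $\I^n$'' is an unambiguous condition on either side; this is guaranteed by the naturality of the homeomorphism and the fact that finitely generated projective unital $\ell$-groups are represented, via $\Gamma$, by precisely the same $\mathbb Z$-retracts that represent their MV-algebra counterparts.
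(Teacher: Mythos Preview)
Your argument is correct and follows exactly the paper's approach: the paper's proof is the single line ``Immediate from Theorem~\ref{theorem:finite}, using the preservation properties of the $\Gamma$ equivalence,'' and you have simply spelled out in detail which preservation properties are being invoked (retractions, ranges, and the canonical homeomorphism of maximal spectral spaces).
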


\begin{proof} Immediate from Theorem \ref{theorem:finite},
 using the
preservation properties of the $\Gamma$ equivalence,  
\cite[\S 7.2]{cigdotmun}.
\end{proof}

\medskip
\section{The index of a  projective MV-algebra and of a unital $\ell$-group}



%
%

\begin{definition} 
The {\it multiplicity}  
$\mathsf{r}(A)$ of a retract $A$ of $\McNn$ is
the  number of distinct retractions of $\McNn$ onto 
$A$ if this number is finite,  and $\infty$ otherwise. 
The {\it index} \/ $\ind(B)\in\{1,2,\ldots\}\cup\{\infty\}$
 of a finitely generated projective MV-algebra
$B$ is the supremum   of the multiplicities of all retracts
$A \cong B$ of  $\McNk$, with $k$
 the smallest number of generators of $B$.
 One similarly defines 
 the index of
finitely generated  projective unital $\ell$-groups.
\end{definition}

\begin{proposition}
\label{proposition:raffinanda}
(a) Let  $P\subseteq \I^n$   be a   $\mathbb Z$-retract and 
a  closed domain in $\I^n$.   Let 
 $m$ be the maximum number of $\mathbb Z$-homeomorphic 
 pairwise disjoint copies of $P$ in $\I^n$.  Then   
$
\iota(\McN(P))\geq m.
$

(b) An upper bound for the index $\iota(\McN(P))$ 
 is given by \eqref{equation:upperbound}.
%
%
\end{proposition}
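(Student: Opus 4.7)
My plan is to prove (a) by direct construction and (b) by an invariance argument on top of Theorem \ref{theorem:finite}.

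For (a), I would fix any maximum family $P_1,\ldots,P_m$ of pairwise disjoint $\mathbb Z$-homeomorphic copies of $P$ in $\I^n$, together with $\mathbb Z$-homeomorphisms $\theta_i\colon P_1\to P_i$ (with $\theta_1=\id_{P_1}$). The goal is to produce a single $\mathbb Z$-retraction $\sigma$ of $\I^n$ onto $P_1$ whose restriction to each $P_i$ is already a $\mathbb Z$-homeomorphism onto $P_1$. Theorem \ref{Theorem:bella}(b) would then convert the $m$ rational polyhedra $P_1,\ldots,P_m$ into $m$ distinct retractions of $\McNn$ onto $\gen(\sigma_1,\ldots,\sigma_n)\cong\McN(P_1)\cong\McN(P)$, yielding $\iota(\McN(P))\geq m$.

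To build $\sigma$, first define $\sigma_0\colon\bigcup_i P_i\to P_1$ piecewise by $\sigma_0\restrict P_i=\theta_i^{-1}$. Pairwise disjointness of the compact $P_i$ forces positive mutual distance in $\mathbb R^n$, so $\sigma_0$ is continuous and hence a $\mathbb Z$-map on the rational polyhedron $\bigcup_i P_i$. Next, I would invoke \cite[Proposition 3.2]{mun11} to extend $\sigma_0$ to a $\mathbb Z$-map $\tilde\sigma\colon\I^n\to\I^n$. Then I would exhibit a $\mathbb Z$-retraction $\tau\colon\I^n\to P_1$ by conjugating any $\mathbb Z$-retraction $\tau_0\colon\I^n\to P$ with $\mathbb Z$-extensions of the $\mathbb Z$-homeomorphism $P\to P_1$ and its inverse, and set $\sigma=\tau\circ\tilde\sigma$. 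On each $P_i$ one computes $\sigma(x)=\tau(\theta_i^{-1}(x))=\theta_i^{-1}(x)$ (since $\theta_i^{-1}(x)\in P_1$ and $\tau$ fixes $P_1$), so in particular $\sigma\restrict P_1=\id_{P_1}$ and $\sigma$ is an idempotent with $\mathsf R_\sigma=P_1$.

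For (b), I would rerun the counting argument of Theorem \ref{theorem:finite} on any retract $A\cong\McN(P)$ of $\McN(\I^{k_B})$. The right-hand side of \eqref{equation:upperbound} depends only on the number $k$ of connected components of $\interior(\mathsf R_\sigma)$ and on the minimum Lebesgue measure $\lambda$ among those components. Both quantities are invariants of the $\mathbb Z$-homeomorphism class of $\mathsf R_\sigma$: the component count is topological, and $\mathbb Z$-homeomorphisms preserve Lebesgue measure by \cite[Lemma 14.3]{mun11}. Since \cite[Corollary 3.10]{mun11} implies $\mathsf R_\sigma\cong_{\mathbb Z}P$ for every such $A$, the same bound \eqref{equation:upperbound} holds uniformly over all $A\cong\McN(P)$, and taking the supremum proves (b).

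The main obstacle I foresee is the construction of $\tau\colon\I^n\to P_1$ in part (a): the property ``$\mathbb Z$-retract of $\I^n$'' is not automatically preserved under a $\mathbb Z$-homeomorphism of rational subpolyhedra, so one must exhibit it explicitly by the conjugation trick above. Once $\tau$ is in hand, the remaining verifications—continuity of $\sigma_0$, idempotency of $\sigma$, and the identification $\gen(\sigma_1,\ldots,\sigma_n)\cong\McN(P)$—are routine.
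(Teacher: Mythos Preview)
Your approach to (a) is essentially the paper's: glue the inverse $\mathbb Z$-homeomorphisms on the disjoint copies, extend to $\I^n$, post-compose with a $\mathbb Z$-retraction onto the target, and invoke Theorem~\ref{Theorem:bella}. Your treatment of (b) is also the same as the paper's.

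Two comments. First, the ``obstacle'' you single out is not one: since the index is an isomorphism invariant and $\McN(P_1)\cong\McN(P)$ by \cite[Corollary 3.10]{mun11}, you may simply assume $P_1=P$ and take $\tau$ to be any given $\mathbb Z$-retraction of $\I^n$ onto $P$. The paper does exactly this, avoiding your conjugation detour.

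Second, and more substantively, you omit a step that is needed for the conclusion $\iota(\McN(P))\geq m$ to follow. The index is the supremum of multiplicities of retracts of $\McN(\I^{k})$ where $k$ is the \emph{smallest} number of generators of $\McN(P)$; you have only produced a retract of $\McN(\I^n)$ with multiplicity $\geq m$. You must verify $n=k$. This is where the closed-domain hypothesis is used: if $\McN(P)$ were generated by $n-1$ elements, then by \cite[Theorem 3.6.7]{cigdotmun} it would be isomorphic to $\McN(X)$ for some closed $X\subseteq\I^{n-1}$, so its maximal spectral space would have dimension $\leq n-1$, contradicting the fact that $P=\cl(\interior P)$ is $n$-dimensional. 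The paper spells this out; without it your argument for (a) is incomplete.
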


\begin{proof}
(a) Our assumption $P=\cl(\interior(P))$ ensures that
  $n$ is the smallest number of generators of $\McN(P)$.
 As a matter of fact, if  $\McN(P)$ had  $n-1$ generators
 (absurdum hypothesis) then by \cite[Theorem 3.6.7]{cigdotmun} 
 $\McN(P)$  would be isomorphic to an MV-algebra of the
 form $\McN(X)$ for some closed subset $X$ of $\I^{n-1}.$
 By \cite[Corollary 4.18]{mun11} the maximal spectral space
 $\mu_{\McN(X)}$ is homeomorphic to $X,$ whence  its dimension
 is $\leq n-1$.
 On the other hand,  from the isomorphism $\McN(P)\cong \McN(X)$
 we get the homeomorphism  $P\cong X$, so $\dim(P)\leq n-1,$
 thus contradicting the assumption that $P$ is a closed domain in
 $\I^n.$

 Let $Q_1, \,\,Q_2,\ldots,Q_m$ be a (maximal)  set of
 pairwise disjoint  $\mathbb Z$-homeomorphic 
copies of $P$ in $\I^n$. 
Since by \cite[Corollary 3.10]{mun11}  $\McN(Q_1)\cong \McN(P)$
and the index is an isomorphism invariant, we may assume
$Q_1=P$  without loss of generality. If $m=1$ we have nothing
to prove. So assume  $m\geq 2.$ For each  $i=2,\ldots,m$
there is a  $\mathbb Z$-homeomorphism $\eta_i$
of $Q_i$ onto $Q_1.$ For completeness let us set  $\eta_1=\id_P.$
Since the $Q_j$ are pairwise disjoint  ($j=1,\dots,m$) the set
$
\bigcup_{j=1}^m \eta_j
$
is a $\mathbb Z$-map of $\bigcup_{j=1}^m Q_j$ onto $P$.
By \cite[Proposition 3.2(ii)]{mun11} there is a $\mathbb Z$-map 
$
\eta\colon [0,1]^n\to[0,1]^n
$
simultaneously extending each $\eta_j.$
Pick a $\mathbb Z$-retraction  $\sigma$ 
 of $\I^n$ onto $P$. Then
the composite map  $\sigma\circ \eta$ is a 
$\mathbb Z$-retraction of $\I^n$ onto $P$, and for
each  $j=1,\dots,m$ the restriction 
\commento{L to D: Small error corrected and some extra details added.}
$\sigma\circ\eta\restrict Q_j =\sigma\circ \eta_j = \eta_j$
%
%
is a $\mathbb Z$-homeomorphism of 
$Q_j$ onto $P$.  By Theorem \ref{Theorem:bella}, the multiplicity of
the retract  $A=\gen(\sigma_1,\ldots,\sigma_n)$ 
is $\geq m.$   By \cite[Lemma 3.6]{mun11} 
$A\cong \McN(P)$, whence
the desired conclusion follows by definition of the index,
recalling that $n$ is the smallest number of generators of
$\McN(P)$.

(b) By \cite[Lemma 14.3]{mun11} or \cite[Theorem 2.1(iii)]{mun-dcds},
 $\mathbb Z$-homeomorphisms preserve
the Lebesgue measure of $n$-dimensional  polyhedra in $\I^n$.
By \cite[Corollary 3.10]{mun11}, 
$\McN(P)\cong\McN(Q)$ implies $P\cong_\mathbb Z Q.$
\end{proof}

\begin{corollary}
\label{corollary:index-free}
 The index 
of every finitely generated  free MV-algebra  is
 $1$.    Similarly, for  every $n=1,2,\dots$, the index of  the free unital
 $\ell$-group $\McNr(\I^n)$ is  $1$.
In particular, the two-element MV-algebra
$\{0,1\}$ is the free MV-algebra over $0$ generators.
Its index is equal to $1$.   
\end{corollary}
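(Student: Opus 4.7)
The plan is to reduce everything to Proposition \ref{proposition:raffinanda} applied to the canonical polyhedron $P = \I^n$, which is trivially a closed domain in $\I^n$ (we have $\cl(\interior(\I^n)) = \I^n$). Since $\McN(\I^n) = \McNn$, the smallest number of generators of $\McNn$ is $n$, so the index is computed by looking at retracts $A \cong \McNn$ of $\McNn$ itself.

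For the lower bound, Proposition \ref{proposition:raffinanda}(a) gives $\iota(\McNn) \geq m$, where $m$ is the maximum number of pairwise disjoint $\mathbb Z$-homeomorphic copies of $\I^n$ inside $\I^n$. Since a copy of $\I^n$ has Lebesgue measure $1$, we get $m = 1$, yielding $\iota(\McNn) \geq 1$. For the upper bound, I would argue as follows. Any retract $A \cong \McNn$ arises from a $\mathbb Z$-retraction $\sigma \colon \I^n \to \I^n$ whose range $\mathsf{R}_\sigma$ is $\mathbb Z$-homeomorphic to $\I^n$. Since $\mathbb Z$-homeomorphisms preserve Lebesgue measure of $n$-dimensional rational polyhedra (\cite[Lemma 14.3]{mun11}), $\mathsf{R}_\sigma$ has measure $1$; being closed in $\I^n$ with open complement of zero measure, $\mathsf{R}_\sigma = \I^n$. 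Then $\sigma$ is a surjective idempotent on $\I^n$, forcing $\sigma(y) = \sigma(\sigma(x)) = \sigma(x) = y$ for every $y = \sigma(x)$, i.e., $\sigma = \id_{\I^n}$. By \eqref{equation:correspondence}, the unique retraction of $\McNn$ onto $\McNn$ is the identity homomorphism, so $\mathsf{r}(A) = 1$ and hence $\iota(\McNn) = 1$.

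For the unital $\ell$-group $\McNr(\I^n)$, I would invoke the $\Gamma$ equivalence, which identifies retractions of $\McNr(\I^n)$ onto a retract $(G,u)$ with retractions of $\McNn = \Gamma(\McNr(\I^n))$ onto $\Gamma(G,u)$, and preserves the minimum number of generators. Thus $\iota(\McNr(\I^n)) = \iota(\McNn) = 1$.

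For the final statement about the two-element MV-algebra $\{0,1\}$, I would note that $k = 0$ (zero generators), so the relevant free algebra is $\McN(\I^0) = \{0,1\}$ itself (the only $\mathbb Z$-maps from a point to $\I$ with integer coefficients are the constants $0$ and $1$). Any MV-homomorphism $\{0,1\} \to \{0,1\}$ must send $0 \mapsto 0$ and $1 \mapsto 1$, so the identity is the unique retraction. No real obstacle arises: the argument is essentially a measure-theoretic uniqueness fact combined with the dictionary established in Theorem \ref{Theorem:bella}; the only mildly delicate point is verifying that a closed subset of $\I^n$ with full Lebesgue measure must be all of $\I^n$, which is immediate since its complement is open with zero measure.
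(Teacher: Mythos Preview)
Your proof is correct and follows the paper's intended route: the corollary is stated without proof immediately after Proposition~\ref{proposition:raffinanda}, so the paper's implicit argument is simply to apply that proposition with $P=\I^n$. You carry this out explicitly, using part~(a) for the lower bound and, for the upper bound, giving the direct measure argument (any $\mathbb Z$-retract $\mathbb Z$-homeomorphic to $\I^n$ has full measure, hence equals $\I^n$, forcing $\sigma=\id$) rather than citing the binomial bound \eqref{equation:upperbound} from part~(b); both rest on the same measure-preservation fact \cite[Lemma~14.3]{mun11}, so the difference is cosmetic.
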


The following example explains why
in the definition of the index of $B$ we restrict to those
isomorphic copies of $B$ which are retracts of $\McNk$  
{\it with
$k$ the smallest number of generators of $B$.}
As in \cite[p.11]{mun11}, or \cite{mun-cpc}, for any
rational point  $r\in \mathbb R^n$, the {\it denominator}
$\den(r)$ is defined by
\begin{equation}
\den(r) =
\mbox{ least common denominator of the
coordinates of $r$.}
\label{equation:den} 
\end{equation}

\begin{example} 
For $n \geq 1$ let ${\rm cyl}(n,\McN(\I))\subseteq \McNn$
be the isomorphic copy of $\McN(\I)$ obtained  by
cylindrifying each  $f\in\McN(\I)$
 into the function  $c\in \McNn$ given
by   $c(x,x_2,\ldots,x_n)=f(x)\,\,\mbox{ for all } (x,x_2,\ldots,x_n)\in \I^n.$ 
By Corollary \ref{corollary:index-free} 
 the index of the free
MV-algebra $\McN(\I)$ is 1.  We {\it claim} 
that   the
multiplicity of its isomorphic copy ${\rm cyl}(2,\McN(\I))$  is  $\infty$.
Let the  $\mathbb Z$-retraction  $\xi=(\xi_1,\xi_2)\colon
\I^2\to \I^2$ be given by $\xi_1(x,y)=x,\,\,\,\xi_2(x,y)=0$.
$\xi$ projects any point of the unit square onto the $x$-axis.
A direct inspection shows that
  $\xi$ preserves the denominator of a
 rational point of $(x,y)\in \I^2$ iff 
 the denominator of $y$ is a divisor of 
 the denominator of $x$. This is the case,
 in particular, when the point $(x,y)$ belongs to
 the graph $W$ of a McNaughton function $f$ in $\McN(\I)$,
 \,\,because every linear piece of $f$ has integer
 coefficients. 
By \cite[Proposition 3.15]{mun11},
  $\xi$ acts $\mathbb Z$-homeomorphically
over the  broken line  $W\subseteq \I^2$.
  There are countably many such broken
lines  $W$, one for each $f\in \McN(\I).$ By Theorem~\ref{Theorem:bella}(c)
 there are countably
many retractions of $\McN(\I^2)$ onto ${\rm cyl}(2,\McN(\I)).$
Thus the multiplicity of  ${\rm cyl}(2,\McN(\I))$  is  $\infty$,
and our claim is proved.
One similarly proves that
 the multiplicity of ${\rm cyl}(n,\McN(\I))$  is  $\infty$
for each  $n\geq 2$. 
%
As already noted in Corollary \ref{corollary:index-free}, 
 $\iota(\McN(\I))=1$
whence
 $\iota({\rm cyl}(n,\McN(\I)))=1$ for each $n$. 
\end{example}

 \begin{figure} 
 \begin{center}                                     
    \includegraphics[height=8.0cm]{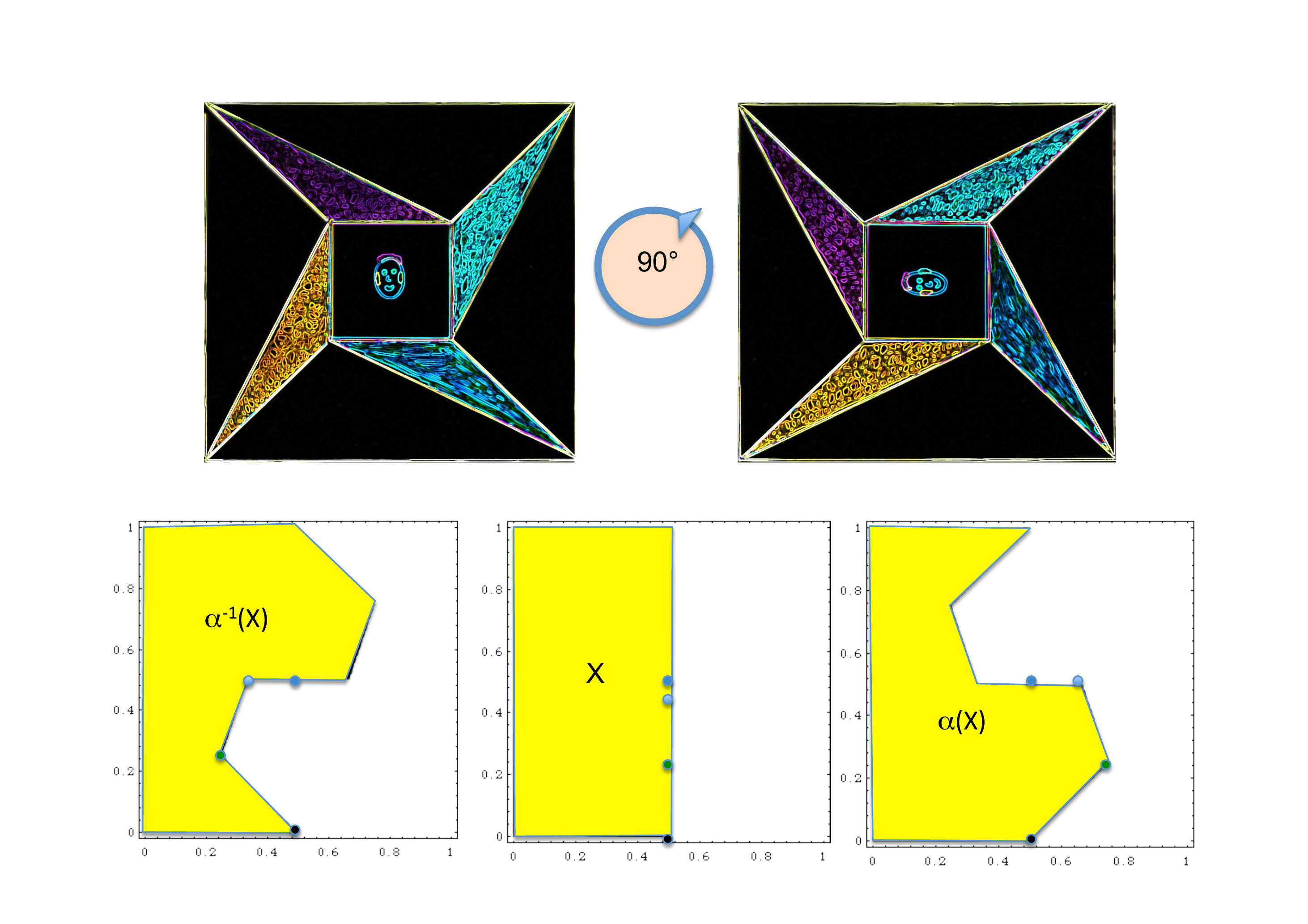}    
    \end{center}                                       
 \caption{\footnotesize Panti's $\mathbb Z$-homeomorphism
  $\alpha\colon \I^2\to \I^2$ and the image  $\alpha(X)$
 of the rectangle $X=\{(x,y)\in\I^2\mid x\leq 1/2\}$.  As explained in
 \cite{dingripan}, $\alpha$ rotates counterclockwise by
  $90^\circ$  the inner square
  with vertices $(1/3,1/3),(1/3,2/3),(2/3,1/3),(2/3,2/3)$;
  the rubber edges of the eight triangles in the picture
are modified accordingly.  
The perimeter of the square  
is kept fixed by $\alpha$, and so is the central point
$(1/2,1/2)$. The $\mathbb Z$-retraction $\sigma\colon(x,y)\mapsto x\wedge \neg x$ 
sends  $\I^2$ onto $X$.   The map
$\alpha\circ\sigma$ sends $\I^2$  onto $\alpha(X)$, but is not a 
$\mathbb Z$-retraction.
The $\mathbb Z$-retraction
$\tau=\alpha\circ\sigma\circ \alpha^{-1}$  sends $\I^2$ onto $\alpha(X)$. 
So both $X$ and  $\alpha(X)$  are 
$\mathbb Z$-retracts of $\I^2$. 
}  
    \label{figure:tapioco}                                                 
   \end{figure}

The proof of the following result is immediate:
 
\begin{proposition}
\label{example:tapioco}
Let $\sigma=(\sigma_1,\ldots,\sigma_n)$ be a $\mathbb Z$-retraction of $\I^n$,
and  $\alpha$   a $\mathbb Z$-homeomorphism
of $\I^n$ onto $\I^n.$ 
Then the range of the composite map  $\alpha\circ\sigma$ 
is a $\mathbb Z$-retract, and so is its $\mathbb Z$-homeomorphic copy
$\mathsf{R}_\sigma\subseteq \I^n$. 
If  $\,\mathsf{R}_\sigma$ is
$n$-dimensional,
$n$ is the smallest
number of generators of the
retract $\gen(\sigma_1,\ldots,\sigma_n)$ of $\McNn$. 
Letting $\tau=\alpha\circ\sigma\circ\alpha^{-1}$,
it follows that  $\tau$ is a
$\mathbb Z$-retraction of $\I^n$, and the 
two isomorphic  retracts  $\gen(\sigma_1,\ldots,\sigma_n)$ and 
$\,\gen(\tau_1,\ldots,\tau_n)$ have equal multiplicities and equal indexes.
\end{proposition}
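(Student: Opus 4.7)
The plan is to transfer everything from $\sigma$ to its conjugate $\tau = \alpha\circ\sigma\circ\alpha^{-1}$, and the whole proposition should fall out from three short checks. First I would verify that $\tau$ is a $\mathbb Z$-retraction of $\I^n$: since $\alpha$, $\alpha^{-1}$, and $\sigma$ are all $\mathbb Z$-maps, so is $\tau$, and the telescoping
\[
\tau\circ\tau \;=\; \alpha\circ\sigma\circ(\alpha^{-1}\circ\alpha)\circ\sigma\circ\alpha^{-1} \;=\; \alpha\circ(\sigma\circ\sigma)\circ\alpha^{-1} \;=\; \tau
\]
gives idempotence. The range of $\tau$ equals $\alpha(\mathsf R_\sigma)$, which coincides with the range of $\alpha\circ\sigma$. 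This simultaneously shows that $\mathrm{range}(\alpha\circ\sigma)=\alpha(\mathsf R_\sigma)$ is a $\mathbb Z$-retract of $\I^n$ and that $\tau$ itself is a $\mathbb Z$-retraction; its $\mathbb Z$-homeomorphic copy $\mathsf R_\sigma$ is a retract by hypothesis.

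For the minimality clause I would re-run the dimension argument of Proposition~\ref{proposition:raffinanda}(a) essentially verbatim: if $\gen(\sigma_1,\ldots,\sigma_n)\cong \McN(\mathsf R_\sigma)$ admitted a generating set of size $n-1$, then by \cite[Theorem 3.6.7]{cigdotmun} it would be isomorphic to $\McN(X)$ for some closed $X\subseteq \I^{n-1}$, and \cite[Corollary 3.10]{mun11} would force $\mathsf R_\sigma \cong_{\mathbb Z} X$, yielding $\dim(\mathsf R_\sigma)\le n-1$, contrary to $n$-dimensionality.

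For the final clause I would use conjugation at two levels. First, $\alpha\restrict \mathsf R_\sigma$ is a $\mathbb Z$-homeomorphism from $\mathsf R_\sigma$ onto $\mathsf R_\tau$, so by \cite[Lemma 3.6]{mun11} and \cite[Corollary 3.10]{mun11} the retracts $\gen(\sigma_1,\ldots,\sigma_n)$ and $\gen(\tau_1,\ldots,\tau_n)$ are isomorphic MV-algebras, and since $\iota(\cdot)$ is an isomorphism invariant of finitely generated projective MV-algebras by definition, the two indexes coincide automatically. Second, Theorem~\ref{Theorem:bella}(b) identifies retractions of $\McNn$ onto $\gen(\sigma_1,\ldots,\sigma_n)$ with rational polyhedra $Q\subseteq\I^n$ satisfying $\sigma\restrict Q\colon Q\cong_{\mathbb Z}\mathsf R_\sigma$, and analogously for $\tau$; the map $Q\mapsto\alpha(Q)$ is a bijection between these two families, because $\tau\restrict\alpha(Q) = \alpha\circ(\sigma\restrict Q)\circ(\alpha^{-1}\restrict\alpha(Q))$ is a $\mathbb Z$-homeomorphism onto $\mathsf R_\tau$, with inverse $Q'\mapsto \alpha^{-1}(Q')$. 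Hence the multiplicities coincide. There is no genuine obstacle in the argument; the only point to keep in mind, as Figure~\ref{figure:tapioco} emphasises, is that $\alpha\circ\sigma$ is generally \emph{not} itself a retraction, and that the correct conjugated retraction is $\tau=\alpha\circ\sigma\circ\alpha^{-1}$.
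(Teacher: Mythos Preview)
Your argument is correct and is exactly the kind of routine verification the paper has in mind: the paper gives no proof at all, stating only that ``the proof of the following result is immediate.'' Your telescoping check for idempotence of $\tau$, the dimension argument (borrowed from Proposition~\ref{proposition:raffinanda}(a)) for the minimal number of generators, and the conjugation bijection $Q\mapsto\alpha(Q)$ between $\mathbb Z$-homeomorphism domains via Theorem~\ref{Theorem:bella}(b) are precisely the natural steps one would supply; the only minor quibble is that in the minimality clause the closed set $X\subseteq\I^{n-1}$ produced by \cite[Theorem 3.6.7]{cigdotmun} need not be a rational polyhedron, so \cite[Corollary 4.18]{mun11} (homeomorphism of maximal spectra) is the safer citation there rather than \cite[Corollary 3.10]{mun11}, but this does not affect the validity of the conclusion.
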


 {Figure \ref{figure:tapioco}}  shows the special case
of Proposition \ref{example:tapioco} for    $n=2$, where
$\alpha$ is
Panti's $\mathbb Z$-homeomorphism,  \cite{dingripan} and
$\sigma=\pi_1\wedge\neg\pi_1\colon \I^2\to\I^2$.

\medskip
The effective 
computability  of the index of a one-generator projective MV-algebra is
taken care of by the following easy result:

\begin{proposition}
\label{proposition:exceptional-case}
Let $B\not\cong\{0,1\}$ be a one-generator projective MV-algebra.
\begin{itemize}
\item[(a)]
For a unique rational $0< r \in\I$ we have the
isomorphism $B\cong \McN([0,r]).$ Then
$\iota(B)\in\{1,2\}$. 
Further, 
$\iota(B)=2$  iff
$r\leq 1/2.$

\item[(b)] 
In equivalent
algebraic-topological  terms,   
$\iota(B)=2$, unless the maximal spectral space
$\mu_{B}$   contains 
an element $\mathfrak m$ such that  $B/\mathfrak m
\cong\{0,1/2,1\}$ and
$\mu_{B}\setminus\{\mathfrak m\}$  is disconnected---in which case
$\iota(B)=1$.
\end{itemize} 
\end{proposition}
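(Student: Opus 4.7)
The plan is to reduce to the dual geometric picture and exploit the simple classification of $\mathbb{Z}$-retracts of $[0,1]$. As a first step I would translate $B$ through the duality: since $B\not\cong\{0,1\}$ is $1$-generator projective, the discussion preceding Theorem~\ref{theorem:finite} gives $B\cong\McN(P)$ for a $\mathbb{Z}$-retract $P\subseteq[0,1]$. I would classify such $P$'s by observing that any $\mathbb{Z}$-retraction $\sigma\colon[0,1]\to[0,1]$ sends each endpoint of $[0,1]$ to an integer (the linear piece of $\sigma$ at that endpoint has the form $ax+b$ with $a,b\in\mathbb{Z}$), so the connected range $\mathsf{R}_\sigma$ meets $\{0,1\}$ and must be one of $\{0\}$, $\{1\}$, $[0,r]$, $[s,1]$, or $[0,1]$. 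The zero-dimensional cases give $B\cong\{0,1\}$ and are excluded; composing with the $\mathbb{Z}$-homeomorphism $x\mapsto 1-x$ (which converts $[s,1]$ to $[0,1-s]$) reduces to $B\cong\McN([0,r])$ for some rational $r\in(0,1]$. Uniqueness of $r$ is then immediate from \cite[Corollary 3.10]{mun11}: $[0,r]\cong_{\mathbb Z}[0,r']$ forces $r=r'$ by a short analysis, since each linear piece of a $\mathbb{Z}$-homeomorphism must have slope $\pm 1$ (to make its inverse integer-affine) and the endpoint constraints do the rest.

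For the index I would fix a $\mathbb{Z}$-retraction $\sigma$ of $[0,1]$ with $\mathsf{R}_\sigma=[0,r]$, set $A=\gen(\sigma)$, and apply Theorem~\ref{Theorem:bella}(b): retractions of $\McN([0,1])$ onto $A$ are in bijection with rational polyhedra $Q\subseteq[0,1]$ satisfying $\sigma\restrict Q\colon Q\cong_{\mathbb Z}[0,r]$, and each such $Q$ is itself a $\mathbb{Z}$-retract of $[0,1]$. The classification plus uniqueness then says the only $\mathbb{Z}$-retracts of $[0,1]$ that are $\mathbb{Z}$-homeomorphic to $[0,r]$ are $[0,r]$ and $[1-r,1]$ (coinciding when $r=1$). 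This immediately gives $\mathsf{r}(A)\leq 2$ for every $A\cong B$, hence $\iota(B)\leq 2$. For the dichotomy I would separate cases. If $r>1/2$, the interiors $(0,r)$ and $(1-r,1)$ overlap on $(1-r,r)$, so Lemma~\ref{Lem:SeparationConComp} (with $\eta=\sigma$) forbids $\sigma$ from restricting to a $\mathbb{Z}$-homeomorphism on both polyhedra; only $Q=[0,r]$ survives, and $\mathsf{r}(A)=1=\iota(B)$. If $r<1/2$, the copies $[0,r]$ and $[1-r,1]$ are set-theoretically disjoint, and Proposition~\ref{proposition:raffinanda}(a) yields $\iota(B)\geq 2$; the boundary case $r=1/2$ is witnessed directly by $\sigma(x)=\min(x,1-x)$, whose restrictions to $[0,1/2]$ and to $[1/2,1]$ are respectively the identity and $x\mapsto 1-x$, both $\mathbb{Z}$-homeomorphisms onto $[0,1/2]$, giving $\mathsf{r}(\gen(\sigma))=2$. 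Combined with the upper bound $\iota(B)\leq 2$, this gives $\iota(B)=2$ whenever $r\leq 1/2$.

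Finally, part (b) will follow by translating the spectral condition into one on $r$. By \cite[Corollary 4.18]{mun11}, $\mu_B$ is homeomorphic to $[0,r]$, and a maximal ideal $\mathfrak m$ corresponding to $c\in[0,r]$ gives $B/\mathfrak m$ equal to the \luk\ chain with $\den(c)+1$ elements (cf.\ \cite[Proposition 3.15]{mun11}). Hence $B/\mathfrak m\cong\{0,1/2,1\}$ iff $\den(c)=2$ iff $c=1/2$; the point $1/2$ belongs to $[0,r]$ iff $r\geq 1/2$, and $[0,r]\setminus\{1/2\}$ is disconnected iff $1/2$ is interior to $[0,r]$ iff $r>1/2$. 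The exceptional clause in (b) is thus equivalent to $r>1/2$, which by (a) is exactly the $\iota(B)=1$ case. The main step underlying the whole argument is the geometric classification in the first paragraph: once the $\mathbb{Z}$-retracts of $[0,1]$ that are $\mathbb{Z}$-homeomorphic to $[0,r]$ have been pinned down to the two candidates $[0,r]$ and $[1-r,1]$, both the sharp upper bound and the constructions witnessing the lower bound follow quickly.
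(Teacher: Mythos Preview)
Your proof is correct and follows the same overall strategy as the paper's: classify $B$ as $\McN([0,r])$, identify $[0,r]$ and $[1-r,1]$ as the only rational polyhedra in $[0,1]$ that are $\mathbb Z$-homeomorphic to $[0,r]$, and then split on whether $r>1/2$ or $r\le 1/2$. The differences are in the local tools. For $r>1/2$ the paper invokes the measure-theoretic bound from the proof of Theorem~\ref{theorem:finite} (which itself rests on Lemma~\ref{Lem:SeparationConComp}), while you apply Lemma~\ref{Lem:SeparationConComp} directly to the two overlapping candidates; this is slightly more elementary in dimension one. You also treat the boundary case $r=1/2$ explicitly via $\sigma(x)=x\wedge\neg x$ (this is exactly the construction in Example~\ref{example:noninvariant-multiplicity}), rather than appealing to Proposition~\ref{proposition:raffinanda}(a), whose disjointness hypothesis is not literally satisfied when $r=1/2$; your handling here is cleaner. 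Your argument for part~(b), translating the spectral clause into the inequality $r>1/2$ via the correspondence between points of $[0,r]$ and maximal ideals and the identification of $B/\mathfrak m$ with the \L ukasiewicz chain of length $\den(c)+1$, is more explicit than the paper's one-line appeal to spectral duality but amounts to the same reformulation.
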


\begin{proof}  (a) The first statement is a particular case
of \cite[Proposition 17.5]{mun11}, upon noting that 
every $\mathbb Z$-retract of $\I$ is $\mathbb Z$-homeomorphic
to $\McN([0,r])$ for some $r\in\mathbb Q\cap \I.$
Further, 
$r>0,$ for otherwise $B$  would be isomorphic to the 
two-element MV-algebra.
In case  $r>1/2$
the measure-theoretic argument in the proof of   Theorem \ref{theorem:finite} 
shows that  $\iota(A)=1$. 
On the other hand, if  $r\leq 1/2$, 
the only other rational polyhedron in $\I$ which is
$\mathbb Z$-homeomorphic to $[0,r]$ is
$[1-r,1]$. By Theorem
\ref{Theorem:bella}(b) and Proposition
\ref{proposition:raffinanda},
$\iota(\McN([0,r]))=2.$

%
%
%

(b) This is just a reformulation of part (a) in the light of the
spectral theory of MV-algebras, \cite[\S 4.5]{mun11},  and the
duality between finitely presented MV-algebras and
rational polyhedra, \cite[\S 3]{mun11}, \cite{marspa}.
\end{proof}

While the index is invariant under isomorphisms,
in the following example we present two isomorphic retracts of
 $\McN(\I)$ having different multiplicities.

\begin{example}
\label{example:noninvariant-multiplicity}  
Let the $\mathbb Z$-retraction of $\I$ onto $[0,1/2]$ be given by  
$\sigma(x)=x\wedge \neg x.$ The retract 
$A=\gen(\sigma)=\gen(\pi_1\wedge \neg\pi_1)\subseteq
\McN(\I)$  is the MV-algebra of all one-variable McNaughton
 functions  $f$  such
 that $f(1-x)=f(x).$ 
Since the restriction of  $\sigma$ to $[1/2,1]$
is a $\mathbb Z$-homeomorphism onto  $[0,1/2]$ and $(\sigma\restrict[1/2,1])^{-1}
=\pi_1\vee \neg \pi_1$, by  Theorem ~\ref{Theorem:bella}
the map $\McN_{\pi_1\vee \neg \pi_1}$ 
 is a second  retraction
$\McN(\I)$ onto $A$.
Moreover,
 $\McN_\sigma$ and $\McN_\rho$ are the only two
 retractions of  $\McN(\I)$ onto $A.$ 
 Thus $\mathsf{r}(A)=2$. 
Let $\tau\colon \I\to \I$ be given by
$\tau(x)=(x\wedge \neg x)\wedge ((\neg x\oplus\neg x)\odot(\neg x\oplus \neg x)).$
 Then $\tau$ is a $\mathbb Z$-retraction of $\I$ onto
 $[0,1/2]$.
Let  $B=\gen(\tau)$.
We have  $A\cong B \cong \McN([0,1/2])$. 
 For no other
segment  $J$ other than $[0,1/2]$ it is the case that
$\tau\restrict J$ is a $\mathbb Z$-homeomorphism of $J$ onto
 $[0,1/2]$.  By Theorem~\ref{Theorem:bella},  $\mathsf{r}(B)=1$. 
 \end{example}

\medskip
\section{When the maximal spectral space is not  a closed domain in $\I^n$}


For any  rational  $m$-simplex
$T = \conv(v_0, \ldots, v_m) \subseteq \mathbb R^{n}$,
let us display
each vertex
$v_j$ of $T$  as  $(a_{j1}/b_{j1}, \ldots, a_{jn}/b_{jn})$,
for uniquely determined
\commento{L to D: are we ever using this notation? Wouldn't it be simpler to write 
``For any rational vertex $v=(v_1,\ldots,v_n)\in \mathbb R^{n}$ we let the  {\it homogeneous correspondent}
$\tilde{v}$  of
${v}$ be defined by 
$\tilde{v} = \den(v)(v_1,\ldots,v_n,1)\in \mathbb{Z}^{n+1}$''. And so avoid all the complicated subindexes needed to distiguish the vertices of $T$.}
%
%
%
%
integers $ a_{jt}, b_{jt}\,\,\,
(t=1,\ldots,n)$ such that
$b_{jt} > 0$.
With the notation of  \eqref{equation:den}
we let the  {\it homogeneous correspondent}
$\tilde{v}_j$  of
${v}_j$ be defined by 
$$\tilde{v}_j = \den(v_j) (a_{j1}/b_{j1},
\ldots, a_{jn}/b_{jn}, \, 1) \in {\mathbb Z}^{n+1}. $$
%
Conversely,
${v}_j$ is said to be the {\it affine correspondent}
of  $\tilde{v}_j$.

An   $m$-simplex $U = \conv(w_{0}, \ldots,w_{m})
\subseteq \mathbb R^{n}$
is said to be
\emph{regular}  if it is rational and
the set of integer vectors
$\{\tilde{w}_0, \ldots, \tilde{w}_m\}$
can be extended to
a basis of the free abelian group
${\mathbb Z}^{n+1}$.

For every   simplicial complex
$\Sigma$ the  point-set union of the simplexes
of $\Sigma$ is called the {\it support}
of $\Sigma$, and is denoted
$|\Sigma|$.
We also say that  $\Sigma$ is a 
\emph{triangulation} of $|\Sigma|$.
A   simplicial complex
is said to be  a \emph{regular
triangulation} (of its support) if
all its simplexes are regular.
Regular
   triangulations
    (called ``unimodular'' in \cite{mun-dcds})
     are the affine counterparts of
the regular, or nonsingular,  fans of toric algebraic
geometry, \cite{ewa}.
Suppose $\Sigma$ and $\Theta$
are two simplicial complexes with the same
support, and every simplex of $\Theta$ is
 contained in some simplex of $\Sigma$.
 Then $\Theta$ is said to be
 a  {\it subdivision} of  $\Sigma$.

 Let $\Sigma$ be a simplicial
 complex  and $b \in |\Sigma|
 \subseteq \mathbb R^{n}$.
Following \cite[III, 2.1]{ewa}, the simplicial complex
 $\Sigma_{(b)}$ is obtained
  by the following  procedure:
 replace every simplex $S \in \Sigma$ containing $b$ by the set
of all  simplexes of the form $\conv(b,F)$, where
 $F$ is any face of $S$ that does not contain $b$.
 The subdivision  $\Sigma_{(b)}$ of $\Sigma$ is known as the 
  {\it blow-up $\Sigma_{(b)}$ of $\Sigma$ at
 $b$}.

 %


For any  $m\geq 1$ and regular $m$-simplex
$U =\conv(w_{0},\ldots,w_{m})\subseteq \mathbb R^{n}$ the
{\it Farey mediant of $U$}   is the affine
correspondent
of the  vector $\tilde{w}_0+\cdots+\tilde{w}_m
\in \mathbb Z^{n+1}$.
In the particular case when $\Sigma$
is a regular triangulation and
$b$ is the {\it Farey mediant} of
a  simplex  of $\Sigma$,   the blow-up
$\Sigma_{(b)}$ is  regular.


\medskip
 
The short proof of the following proposition is a template
for the main construction in the proof of Theorem
 \ref{theorem:converse},
yielding a converse of 
Theorem \ref{theorem:finite}.

\begin{proposition}
\label{proposition:counterexample}
There is a retract  of $\McNtwo$
having  an infinite index.
\end{proposition}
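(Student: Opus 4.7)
The plan is to exhibit a one-dimensional rational polyhedron $P\subseteq\I^2$ that is a $\mathbb Z$-retract of $\I^2$ and whose associated subalgebra $\McN(P)$ of $\McNtwo$ has smallest number of generators equal to $2$, together with infinitely many retractions of $\McNtwo$ onto $\McN(P)$. By the definition of the index, this will give $\iota(\McN(P))=\infty$.

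For $P$ I take the L-shape $P=([0,1]\times\{0\})\cup(\{1\}\times[0,1])$, realised as the range of the $\mathbb Z$-retraction $\sigma\colon\I^2\to\I^2$ defined on the triangles $T_1=\{x+y\leq 1\}$ and $T_2=\{x+y\geq 1\}$ respectively by $\sigma(x,y)=(x+y,0)$ and $\sigma(x,y)=(1,x+y-1)$. That $2$ is the smallest number of generators of $\McN(P)$ follows from the denominator-preservation property of $\mathbb Z$-homeomorphisms: since $P$ is connected, any $\mathbb Z$-homeomorphism of $P$ into $\I$ would identify $P$ with a subinterval $[a,b]\subseteq\I$ and carry the corner $(1,0)\in P$ to an interior point of $[a,b]$ of denominator $1$; this is impossible because $0$ and $1$ are the only denominator-$1$ points of $\I$ and they must appear as endpoints of $[a,b]$.

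The main step is to invoke Theorem~\ref{Theorem:bella}(b), which reduces the counting of retractions of $\McNtwo$ onto $\gen(\sigma_1,\sigma_2)\cong\McN(P)$ to the counting of rational polyhedra $Q\subseteq\I^2$ with $\sigma\restrict Q\colon Q\cong_{\mathbb Z}P$. For each integer $n\geq 1$ I would take $Q_n$ to be the union of the bottom arm $[0,1]\times\{0\}$ of $P$ with the broken line inside $T_2$ joining $(1,0)$, $(1/(n+1),1)$ and $(1,1)$ in this order, and verify that the piecewise map $s_n\colon P\to Q_n$ defined by the identity on the bottom arm, by $(1,b)\mapsto(1-nb,\,(n+1)b)$ for $b\in[0,1/(n+1)]$, and by $(1,b)\mapsto(b,1)$ for $b\in[1/(n+1),1]$, is a $\mathbb Z$-map satisfying $\sigma\circ s_n=\id_P$. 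Hence $\sigma\restrict Q_n$ is a $\mathbb Z$-homeomorphism of $Q_n$ onto $P$ with $\mathbb Z$-inverse $s_n$, and distinct values of $n$ yield distinct $Q_n$ because the intermediate vertex $(1/(n+1),1)$ varies with $n$.

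The main obstacle is verifying that each $s_n$ is a genuine $\mathbb Z$-map: each linear piece must have integer coefficients, the two pieces over the right arm must glue continuously at $(1/(n+1),1)$ and at the corner $(1,0)$ where they must match the identity section on the bottom arm, and the images must lie inside $T_2$ so that $\sigma$ sends them back to the right arm of $P$. With the above parametrisation these checks are routine. Theorem~\ref{Theorem:bella}(b) then yields infinitely many retractions of $\McNtwo$ onto $\McN(P)$, so $\iota(\McN(P))\geq\mathsf{r}(\gen(\sigma_1,\sigma_2))=\infty$, proving the proposition.
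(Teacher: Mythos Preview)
Your proof is correct and follows essentially the same approach as the paper: exhibit an L-shaped one-dimensional $\mathbb Z$-retract of $\I^2$, then produce infinitely many rational broken lines $Q_n$ on which the retraction restricts to a $\mathbb Z$-homeomorphism onto the L, and apply Theorem~\ref{Theorem:bella}(b). The paper uses the L at the origin with $\rho(x,y)=(x\ominus y,\,y\ominus x)$ and broken lines $W_p$ indexed by integers $p\geq 3$, while you use the L at $(1,0)$ with $\sigma(x,y)=(\min(x+y,1),\,\max(x+y-1,0))$ and broken lines through $(1/(n+1),1)$; these are cosmetic variations of the same template. Your write-up is in fact slightly more explicit than the paper's in two respects: you verify directly that the smallest number of generators of $\McN(P)$ is $2$ (needed for the multiplicity to bound the index), and you write down the inverse $\mathbb Z$-maps $s_n$ rather than appealing to a denominator-preservation criterion.
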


\begin{proof} 
Let $L$ be the union of the two edges  $\conv((0,1),(0,0))$ and 
$\conv((1,0),(0,0))$. Let  $\rho=(\rho_1,\rho_2)\colon [0,1]^2\to L$
be the $\mathbb Z$-retraction of $[0,1]^2$ onto $L$ given by
\[
\rho(x,y)=(x\ominus y, y\ominus x)=\begin{cases}
(0,y-x)&\mbox{if } y\geq x;\\
(x-y,0)&\mbox{if } x\geq y.
\end{cases}
\]
A direct inspection shows that $\rho$ sends each point  $(x,y)\in [0,1]^2$
to the point of $L$ whose coordinates are  $x-\min(x,y)$ and $y-\min(x,y).$
Geometrically, $\rho$ moves down by 45 degrees in the south-west direction
each point $(x,y)\in [0,1]^2$, by  subtracting the same quantity  $\min(x,y)$ to each coordinate.

\medskip
\noindent{\it Claim.} The MV-algebra $A=\gen(\rho_1,\rho_2)\subseteq \McN(\I^2)$ has infinite multiplicity. 

As a matter of fact, 
(see Figure \ref{figure:infinity}) 
for each integer  $p\geq 3$ let the broken line $W_p\subseteq
\I^2$ be  the union of the segment
$\conv((0,1),(0,0))$ with the three segments
$$
  \conv\left((0,0),(\frac{2}{p},\frac{1}{p})\right),\,\,\,
\conv\left((\frac{2}{p},\frac{1}{p}), (\frac{1}{p-1},0)\right),\,\,\,
\conv\left((\frac{1}{p-1},0),
(1,0)\right).
$$
It is not hard to check that
 $\rho$ is a $\mathbb Z$-homeomorphism of $W_p$ onto $L$. 
To this purpose one 
notes that the triangle  $\conv((0,0),(\frac{2}{p},\frac{1}{p}),(\frac{1}{p-1},0))$
is the union of the regular triangles 
$\conv((0,0),(\frac{2}{p},\frac{1}{p}),(\frac{1}{p},0))$
$\conv((\frac{1}{p-1},0),(\frac{2}{p},\frac{1}{p}),(\frac{1}{p},0))$.  
Further
\begin{itemize}
\item $\rho$ fixes
the segment  $\conv((0,1),(0,0)$;
\item $\rho$ maps
 $\conv((0,0),(\frac{2}{p},\frac{1}{p}))$ one-one onto
 $\conv((0,0),(\frac{1}{p},0))$;
 \item 
 $\rho$ maps
 $\conv((\frac{2}{p},\frac{1}{p}), (\frac{1}{p-1},0))$ 
one one onto  $\conv((\frac{1}{p},0),(\frac{1}{p-1},0))$;
\item $\rho$ fixes $\conv((\frac{1}{p-1},0),(1,0))$.
\end{itemize}
By \cite[Lemma 3.7, Proposition 3.15]{mun11},
 $\rho$ is an invertible
$\mathbb Z$-map of  $W_p$ onto $L$.
To see that the multiplicity of $A$ is infinite,  recall Theorem
\ref{Theorem:bella}, and let $p$ range over all integers $\geq 3$.
Having thus settled our claim, the proof of the proposition is
complete. 
\end{proof}


\begin{figure} 
\begin{center}                                     
    \includegraphics[height=8.0cm]{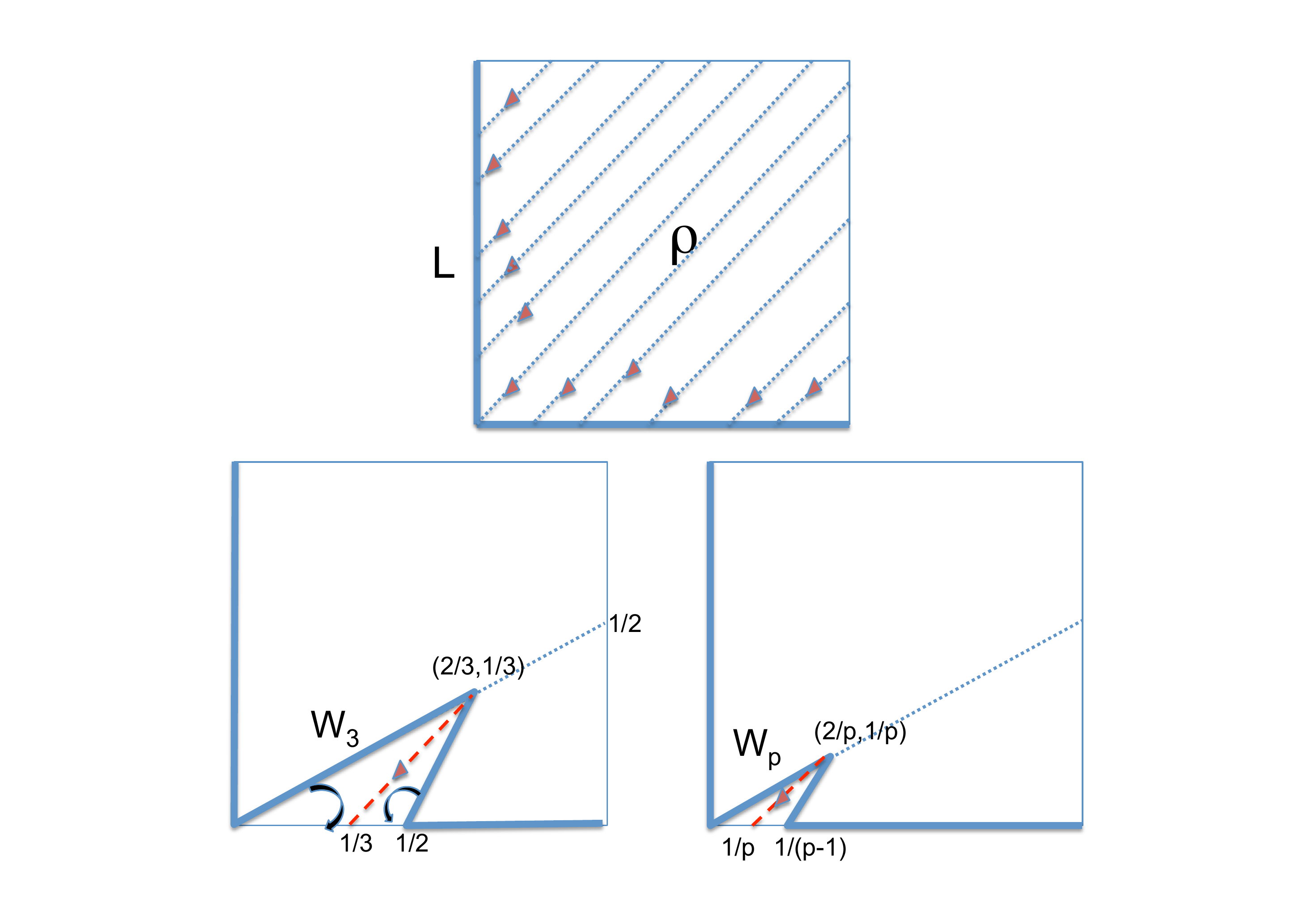}    
    \end{center}                                       
 \caption{\small   
   The $\mathbb Z$-retraction  $\rho$
 in the proof of
Proposition  \ref{proposition:counterexample}.
 Each broken line
 $W_p,\,\,\,p\geq 3,$  is mapped
 $\mathbb Z$-homeomorphically onto $L$ by  $\rho$.   
 The index of the MV-algebra  $\McN(L)$ is $\infty$.}  
    \label{figure:infinity}                                                 
   \end{figure}

\medskip

 Following \cite[Definition 4.1]{cabmun}, a triangulation
 $\Delta$ of a rational polyhedron $P$ is said to be
 {\it strongly regular} if it is regular and for each
 maximal simplex $T$ of $\Delta$ the greatest
 common divisor of the denominators of the vertices
 of $T$ is equal to 1. 
  $P$  is called {\it strongly regular} if it has
 a strongly regular triangulation. Then
every regular triangulation of $P$ is
strongly regular (\cite[Remark 5.1]{cabmun}).
Every $\mathbb Z$-retract of $\cube$ is strongly regular,
\cite[Theorem 5.2(iii)]{cabmun}.

 \begin{theorem}
 \label{theorem:converse}
Fix   a  $\mathbb Z$-retraction  $\rho=
(\rho_1,\ldots,\rho_n)$ of  $\I^n$.  Let $P=\mathsf R_\rho$
be the range of $\rho$.
 If  some (equivalently, every) triangulation of  $P$ contains
 a maximal $m$-simplex with $m<n$ then  the number
 of retractions of $\McNn$ onto  $\gen(\rho_1,\ldots,\rho_n)$ 
 is   infinite. 
  \end{theorem}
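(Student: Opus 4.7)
The strategy is to construct, for each sufficiently large integer $p$, a distinct rational polyhedron $W_p\subseteq \I^n$ with $\rho\restrict W_p\colon W_p\cong_{\mathbb Z} P$. By Theorem~\ref{Theorem:bella}(b), such polyhedra are in bijection with retractions of $\McNn$ onto $\gen(\rho_1,\ldots,\rho_n)$, so exhibiting infinitely many distinct $W_p$ finishes the proof. The idea generalizes the ``tent'' construction of Proposition~\ref{proposition:counterexample}.

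First, I would fix a regular triangulation $\Sigma$ of $\I^n$ such that $\rho$ is affine on every simplex of $\Sigma$ and $\Sigma$ restricts to a regular triangulation $\Delta$ of $P$ containing the hypothesized maximal simplex $T=\conv(v_0,\ldots,v_m)$ with $m<n$. Let $b$ be the Farey mediant of $T$ and let $S\in\Sigma$ be an $n$-simplex having $T$ as a face. By further blow-up of $\Sigma$ at $b$ and iterated Farey mediants, we may arrange that $S$ is so small that continuity of $\rho$, together with the fact that $T$ is maximal in $\Delta$ (so a sufficiently small neighbourhood of $b$ in $P$ lies in $\relint T$), forces $\rho(S)\subseteq T$. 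The linear part $A_S$ of the affine map $\rho\restrict S$ is then the identity on $\aff T - v_0$ and has image contained in $\aff T - v_0$; hence $\ker A_S$ is a rational subspace of $\mathbb R^n$ of dimension $n-m\geq 1$.

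Second, I would pick a primitive integer vector $e\in\ker A_S$ pointing into $S$ from $b$, i.e.\ with $b+te\in S$ for sufficiently small $t>0$. For each large $p$, set
\[
b_p=b+\tfrac{1}{p}\,e,
\]
a rational point of $S\setminus\aff T$ whose denominator grows with $p$. Using further iterated blow-ups of $\Sigma$ at $b$, select a small regular $m$-subsimplex $T_p\subseteq\relint T$ containing $b$ in its relative interior and with $\partial T_p$ contained in the star of $b$ in $\Delta$. Define
\[
W_p=\bigl(P\setminus\relint T_p\bigr)\cup\bigl(b_p\ast\partial T_p\bigr),
\]
where $b_p\ast\partial T_p$ is the join (the tent of apex $b_p$ over the relative boundary of $T_p$). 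Then $W_p$ is a rational polyhedron with a regular triangulation. On $P\setminus\relint T_p$ the map $\rho$ is the identity, and on the tent $b_p\ast\partial T_p\subseteq S$ it acts as the single affine map $\rho\restrict S$: since $A_S e=0$ we get $\rho(b_p)=b$, while $\rho$ fixes $\partial T_p\subseteq T$; the tent is thus sent bijectively and affinely onto $T_p$. Because $\rho$ has integer coefficients and the Farey mediant blow-ups keep all denominators compatible, this restriction preserves denominators, so by \cite[Proposition~3.15]{mun11} it is a $\mathbb Z$-homeomorphism. Glueing, $\rho\restrict W_p\colon W_p\cong_{\mathbb Z} P$. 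The $W_p$ are pairwise distinct since $b_p\in W_p$ while $b_p\notin W_q$ for $q\neq p$, and Theorem~\ref{Theorem:bella}(b) yields infinitely many retractions.

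The main obstacle is the combinatorial bookkeeping required to make the tent $b_p\ast\partial T_p$ admit a \emph{regular} triangulation (i.e.\ one whose homogeneous vertices extend to a $\mathbb Z$-basis of $\mathbb Z^{n+1}$) and to guarantee that $\rho\restrict(b_p\ast\partial T_p)$ preserves denominators. This requires choosing $T_p$ by iterated Farey mediant blow-ups whose denominators mesh with that of $b_p$, mirroring the ad hoc use of the vertices $(2/p,1/p)$ and $(1/(p-1),0)$ in the proof of Proposition~\ref{proposition:counterexample}. The technical tools underpinning this step are the extension result \cite[Proposition~3.2]{mun11}, the denominator criterion \cite[Proposition~3.15]{mun11}, and the regular blow-up machinery described in the paragraph preceding Proposition~\ref{proposition:counterexample} (see also \cite[\S III.2]{ewa}).
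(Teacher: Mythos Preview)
Your proposal follows the same strategy as the paper: construct, for infinitely many parameters, a rational polyhedron obtained from $P$ by replacing a small patch of the low-dimensional maximal simplex $T$ with a ``tent'' whose apex is lifted out of $\aff T$, and show that $\rho$ collapses the tent $\mathbb Z$-homeomorphically onto its base. Theorem~\ref{Theorem:bella} then converts these domains into distinct retractions.

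One genuine variation: you lift the apex along a primitive integer vector $e\in\ker A_S$, so that $\rho(b_p)=b$ exactly. The paper does not do this; it first conjugates by an element of ${\rm GL}(n,\mathbb Z)\ltimes\mathbb Z^n$ to place $\aff T$ in a coordinate subspace, lifts the apex $l$ in a coordinate direction, and then computes the image $l^*=\rho'(l)$ (which need not equal $c'$). The De~Concini--Procesi desingularization is then used to produce a regular triangulation $\nabla$ of $\gamma(T)$ having $l^*$ as a \emph{vertex}. This extra step buys the paper a one-line determinant verification (the matrix of the homogeneous correspondents of the vertices of $B$, augmented by the row of $\tilde l$) that each pyramid $\conv(B,l)$ is a regular $(m{+}1)$-simplex; regularity of the lateral faces and denominator preservation then follow immediately.

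The obstacle you flag is real, and your sketch does not yet clear it. As written, $\den(b_p)\neq\den(b)$ in general (e.g.\ $b$ an integer point, $e$ primitive, $p$ prime gives $\den(b_p)=p\neq1$), so the clause ``this restriction preserves denominators'' is unjustified and the appeal to \cite[Proposition~3.15]{mun11} fails. Moreover, asking for a regular $T_p$ with $b$ in its \emph{relative interior} ``by iterated blow-ups at $b$'' is self-defeating: blow-ups at $b$ make $b$ a vertex, not an interior point. The paper's cure is precisely to make the apex's image a vertex of the base triangulation rather than an interior point; if you want to keep your kernel-direction lift, the natural fix is to blow up $T$ once at $b$ (so $b$ becomes a vertex), take the star of $b$ as the patch, and verify regularity of each $\conv(b_p,B)$ by the same augmented-determinant argument the paper uses.
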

  
  \begin{proof}  Since $P$
is a $\mathbb Z$-retract of
  $\I^n$) then  $\McN(P)$ projective,   \cite[Theorem 5.1]{cabmun},
  \cite[Proposition 17.5(ii)]{mun11}.
By Theorem   \ref{Theorem:bella}
  there is a one-one correspondence between the set
of  retractions of $\McNn$ onto the MV-algebra  
  $\gen(\rho_1,\ldots,\rho_n)\subseteq \McNn$  and the set
  of  rational polyhedra  $R\subseteq \I^n$ such that the restriction
$\rho\restrict R$ is a $\mathbb Z$-homeomorphism
of  $R$ onto  $P$. Let us say that any such  $R$ is a
{\it $\mathbb Z$-homeomorphism domain} for $\rho.$
So it suffices to show that the number of such domains $R$ is
infinite.

Let $\Delta$ be a regular triangulation of $P$. The existence of
$\Delta$ follows from
  \cite[Corollary 2.10]{mun11}. By assumption,
$\Delta$ has a maximal $m$-simplex $T$ with  $m<n.$
It follows that  $m\geq 1,$ for otherwise by \cite[Theorem 5.2(i)-(ii)]{cabmun}
the $\mathbb Z$-retract  $P$ would coincide with a vertex of $\I^n$, so $\McN(P)$ is the free MV-algebra over 0 generators,
and $n=m=0$, which is impossible.

Since $P$ is a $\mathbb Z$-retract
of $\I^n$, $P$ is   strongly regular,
 \cite[Theorem 5.2(iii)]{cabmun}.
 Thus,   for  some prime number $p$ there exists
 a rational point  $c$ of denominator $p$ with   
\begin{equation}
\label{equation:odot}
c\in \relint T.
\end{equation}  
Actually, such $c$ exists for all sufficiently large primes $p$, because
$T$ is a strongly regular  $m$-simplex with $m> 0$.

As another consequence of the strong regularity of $P$,
 the affine hull $\aff(T)\subseteq \mathbb R^n$ of $T$
 contains some integer point of $\mathbb Z\subseteq \mathbb R^n,$ \cite[Theorem 4.17]{cab-forum}.
Then the construction of \cite[Lemma 5]{cabmun-etds}
 yields integer points  $j_0,\ldots,j_m \in \mathbb Z^n$
such that  $\aff(T) = \aff(\conv(j_0,\ldots,j_m))$ and the $m$-simplex
$I=\conv(j_0,\ldots,j_m) \subseteq \mathbb R^n$ is regular and contains
$c$ in its relative interior.  

Let   $\mathcal G_n={\rm GL}(n,\mathbb Z)\ltimes \mathbb Z^n$
denote the 
$n$-dimensional affine group over the integers. By 
 \cite[Lemma 1]{cabmun-etds}
some function $\gamma\in \mathcal G_n$ maps 
 $\aff(T)$ one-one onto the $m$-dimensional space
$$
F_m=\{x=(x_1,\ldots,x_n) \in \mathbb R^n\mid x_{m+1}=\cdots=x_n=0\}.
$$ 
Thus the $m$-simplex  $\gamma(I)$ lies in $F_m$, and  we can write without
loss of generality
$$
\gamma(I)=\conv(0,(1,\underbrace{0,\ldots,0}_{n-1 \,\,{\rm zeros}}),
(0,1,\underbrace{0,\ldots,0}_{n-2 \,\,{\rm zeros}}),\ldots,
(0,\ldots,0,1,\underbrace{0,\ldots,0}_{n-m \,\,{\rm zeros}})).
$$
Since $\gamma $ is a (linear)  $\mathbb Z$-homeomorphism,
  it preserves denominators of rational points
  and maps
regular simplexes one-one onto regular simplexes,
 \cite[Proposition 3.15]{mun11}. 
Let us display the
point $c'=\gamma(c)$ as follows:
$$
c'=(c'_1,\ldots,c'_m,\underbrace{0,\ldots,0}_{n-m\,\,zeros})=(a_1/p,\ldots,
a_m/p,0,\ldots,0)
$$ 
for suitable relatively prime integers  $0\leq a_1,\ldots,a_m\leq p.$ Note that
$\den(c')=\den(c)=p.$  By \eqref{equation:odot},
\begin{equation}
\label{equation:relintbis}
c'\in \relint(\gamma(T)).
\end{equation}
We next define the point  $l\in \mathbb R^n$ by

\begin{equation}
\label{equation:ell}
l=(c'_1,\ldots,c'_m,1/p,\underbrace{0,\ldots,0}_{n-m-1\,\,zeros})
=(a_1/p,\ldots,
a_m/p,1/p,0,\ldots,0).
\end{equation}
Permuting, if necessary, the coordinates in $\mathbb R^n$, for all sufficiently
large primes $p$ we can safely assume 
\begin{equation}
\label{equation:lorrain}
\gamma^{-1}(l) \in \I^n.
\end{equation}
Since $P$ is a polyhedron and $T\in \Delta$, then by \eqref{equation:odot}
for all sufficiently small $\epsilon > 0$ the closed ball  $B_{\epsilon, c}$ of radius
$\epsilon$ centered at $c$ satisfies the condition
\begin{equation}
\label{equation:isolation}
B_{\epsilon,c} \cap P \subseteq T.
\end{equation}
 The affine transformation  $\gamma$ sends $B_{\epsilon,c}$ one-one onto
 an $n$-dimensional ellipsoid  $\gamma(B_{\epsilon,c})$  containing
 the point  $\gamma(c)$ in its relative interior. Further, by 
 \eqref{equation:isolation} we can write
  \begin{equation}
\label{equation:stella}
\gamma(B_{\epsilon,c}) \cap \gamma(P) \subseteq \gamma(T).
\end{equation}
The map  $\rho'= \gamma \circ \rho\circ \gamma^{-1}$ is a $\mathbb Z$-retraction
of the $n$-parallelepiped 
$\gamma(\I^n)$ onto the rational polyhedron  $\gamma(P).$  By \eqref{equation:stella},
all points sufficiently close to $c' $ are mapped by  $\rho'$ into points lying in 
the $m$-simplex $\gamma(T).$  For all sufficiently small $\epsilon>0$
the piecewise linear  map $\rho'$ is linear over $\gamma(B_{\epsilon, c})$.
A continuity argument recalling \eqref{equation:relintbis} ensures that
 the point  $l^*=\rho'(l)$ lies in the relative interior of $\gamma(T)$, because
 $l$ tends to $c'$ as $p$ tends to $\infty$.

The De Concini-Procesi theorem in the version of 
\cite[Theorem 5.3]{mun11}  (or the affine version of the desingularization
procedure of \cite[p.70]{ewa})
   yields a regular
triangulation  $\nabla$ of  $\gamma(T)$ such that  $l^*$  is a vertex of some
simplex of $\nabla.$  The set $S$ of $m$-simplexes of $\nabla$ is now defined by
$$
S=\{B\mid B \in \nabla \mbox{\,\,is an $m$-simplex having $l^*$ among its vertices}\}.
$$
Fix now  $B\in S$ and write
$
B=\conv(v_0,v_1,\ldots,v_m)$ for suitable points 
$v_i\in \mathbb R^n.$
For each $i=0,\dots,m$ let us display the homogeneous 
correspondent
$\tilde v_i\in \mathbb Z^{n+1}$ 
 of vertex  $v_i$ as follows:
$$
\tilde v_i= (b_{i1},\dots,b_{im}\underbrace{0,\dots,0}_{n-m\,\,{\rm zeros}},d_i).
$$
From the regularity of $B\in S\subseteq \nabla$
we get 
\begin{equation}
\label{equation:first-regular}
\det
\left(
\begin{tabular}{ccccc} 
$b_{01}$ & $b_{02}$ & $\dots$ & $b_{0m}$ &$d_{0}$ \\
$b_{11}$ & $b_{12}$ & $\dots$ & $b_{1m}$ &$d_{1}$ \\
$\dots$ & $\dots$ & $\dots$ & $\dots$ &$\dots$ \\   
$b_{m1}$ & $b_{m2}$ & $\dots$ & $b_{mm}$ &$d_{m}$ 
  \end{tabular}
  \right)  
  = \pm 1.
\end{equation}
Recalling \eqref{equation:ell} we can similarly write
$$
\tilde l 
=(a_1,\ldots,
a_m,1,\underbrace{0\dots,0}_{n-m-1{\rm \,\,zeros}}, \,p).
$$
Let $\mathbf{b}_1,\ldots,\mathbf{b}_m, \mathbf{d}$
be the column vectors of the integer matrix  
 \eqref{equation:first-regular}. 
We then have  
\[
\det
\left(
\begin{tabular}{cccccc} 
$\mathbf{b}_1$ & $\mathbf{b}_2$ & $\dots$ & 
$\mathbf{b}_m$ &0&$\mathbf{d}$ \\  
$a_1$ & $a_2$ & $\dots$ &  $a_m$ & $1$ & $p$ 
  \end{tabular}
  \right)  
  = \pm 1,
\]
showing the regularity of the 
$(m+1)$-simplex   $P_B= \conv(B,l)$
for every $B\in S.$
 The basis of the pyramid  $P_B$ is the $m$-simplex
 $B\subseteq F_m.$  The lateral $m$-surface
of  $P_B$ is the point set union of all $m$-simplexes of  $P_B$ having $l$ as a vertex.
Let the $(m+1)$-dimensional pyramid  $P_S\subseteq \mathbb R^n$  be defined by
$$
P_S= \bigcup_{B\in S} P_B.
$$
Its basis  $B_S$  is the point set union  of the bases $B$'s, for all $B\in S.$ 
Let  $L^*_B\subseteq L_B$  be  obtained by stripping $L_B$ of
all $m$-simplexes of $P_B$ having $l^*$ as a vertex. Then the lateral
surface $L_S$ of $P_S$  is given by
$$
L_S = \cl\bigcup_{B\in S} L^*_B = \bigcup_{B\in S}\cl(L^*_B).
$$
Since the $\mathbb Z$-retraction  $\rho'$ is linear over the ellipsoid
$\gamma(B_{\epsilon,c})$ then $\rho'$ maps $L_S$ one-one
onto $B_S.$ Intuitively,
$\rho'$  collapses the lateral surface of $P_S$  one-one onto its basis
$B_S.$  This can be directly verified for each  $B\in S$, noting
that  $\rho'$ maps $\cl(L^*_B)$  one-one onto $B$.  
Since  $\rho'$  preserves the
denominators of the vertices of each $m$-simplex  $P_B$, and
  $P_B$ is  regular,  then by \cite[Proposition 3.15]{mun11}
$\rho'$ maps 
$\mathbb Z$-homeomorphically   $\cl(L^*_B)$   
  onto  $B$.  Thus $\rho'$ maps 
$\mathbb Z$-homeomorphically  $L_S$   
 onto $B_S.$
Further,  $\rho'$ is identity over $\gamma(P)\setminus B_S$, whence
$\rho'$ is a $\mathbb Z$-homeomorphism of   $(\gamma(P)\setminus B_S)  \cup L_S$
onto $\gamma(P).$

In conclusion,  
the set $(\gamma(P)\setminus B_S)  \cup L_S$ is
a $\mathbb Z$-homeomorphism domain of $\rho'.$  Going back via $\gamma^{-1}$
we see that  the  $\mathbb Z$-retraction   $\rho$ sends
$(P\setminus \gamma^{-1}(B_S))\cup \gamma^{-1}(L_S)$
$\mathbb Z$-homeomorphically onto $P$.  (Note that
  \eqref{equation:lorrain}
ensures that $\gamma^{-1}(L_S)$ lies 
in the $n$-cube).
The choice of $c\in \relint(T)$ and of the large prime $p$ being arbitrary, 
it follows  that
there are infinitely many  $\mathbb Z$-homeomorphism domains of $\rho.$
By Theorem  \ref{Theorem:bella}  the number of retractions of 
$\McNn$ onto  $\gen(\rho_1,\ldots,\rho_n)$ is infinite. 
 \end{proof}

 Combining the foregoing theorem with 
 Theorem \ref{theorem:finite}  we immediately obtain:
 
 \begin{corollary}
 \label{corollary:criterion} 
 Let $k$ be the smallest number of
 generators of a  finitely generated projective
 MV-algebra  $B$. Then the index of $B$ is finite iff
 the maximal spectral space of $B$ is  homeomorphic to
 a regular domain in $\I^k$. 
 \end{corollary}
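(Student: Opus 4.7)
The plan is to argue that this corollary is essentially a packaging of Theorem \ref{theorem:finite} and Theorem \ref{theorem:converse}, together with the standard observation that a polyhedron $P\subseteq \I^k$ equals $\cl(\interior(P))$ if and only if every triangulation of $P$ has all its maximal simplexes of dimension exactly $k$. The starting point is to realize $B$ as $\McN(P)$ for some $\mathbb Z$-retract $P\subseteq \I^k$, which is possible by \cite[Theorem 5.1]{cabmun} or \cite[Proposition 17.5]{mun11}. By \cite[Corollary 4.18]{mun11} any such $P$ is homeomorphic to $\mu_B$, and by \cite[Corollary 3.10]{mun11} any two such representatives are $\mathbb Z$-homeomorphic; in particular the property of being a closed domain in $\I^k$ is independent of the chosen representative, so the statement ``$\mu_B$ is homeomorphic to a closed domain in $\I^k$'' is unambiguous.

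For the ``if'' direction, assume $\mu_B$ is a closed domain in $\I^k$, and let $A$ be any retract of $\McN(\I^k)$ isomorphic to $B$. Writing $A=\gen(\sigma_1,\dots,\sigma_k)$ for an associated $\mathbb Z$-retraction $\sigma$, the retract $\mathsf{R}_\sigma$ is a representative of $\mu_B$, hence a closed domain. Then Theorem \ref{theorem:finite} and the explicit bound \eqref{equation:upperbound} from its proof give $\mathsf{r}(A)\leq \binom{\lfloor 1/\lambda\rfloor}{k}$, where $\lambda$ is the smallest Lebesgue measure of a connected component of $\interior(\mathsf{R}_\sigma)$. Since all such $\mathsf{R}_\sigma$ are pairwise $\mathbb Z$-homeomorphic, and $\mathbb Z$-homeomorphisms preserve $k$-dimensional Lebesgue measure (\cite[Lemma 14.3]{mun11}), the number $\lambda$ depends only on $B$, not on $A$. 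Hence the bound is uniform across all $A\cong B$, and $\iota(B)<\infty$.

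For the ``only if'' direction, suppose $\mu_B$ is not homeomorphic to a closed domain in $\I^k$. Fixing any $\mathbb Z$-retract representative $P\subseteq\I^k$ of $\mu_B$ and any regular triangulation $\Delta$ of $P$ (whose existence is granted by \cite[Corollary 2.10]{mun11}), the failure $P\neq \cl(\interior(P))$ forces $\Delta$ to contain a maximal $m$-simplex with $m<k$. Writing $P=\mathsf{R}_\rho$ for some $\mathbb Z$-retraction $\rho$ of $\I^k$, Theorem \ref{theorem:converse} yields infinitely many retractions of $\McN(\I^k)$ onto $\gen(\rho_1,\dots,\rho_k)\cong B$, so $\mathsf{r}(\McN(P))=\infty$ and therefore $\iota(B)=\infty$.

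I do not anticipate any serious obstacle: the two hard theorems have already been proved, and all that remains is to check the equivalence between being a closed domain in $\I^k$ and having all maximal simplexes of top dimension in some (hence every) triangulation, and to observe that the upper bound from Theorem \ref{theorem:finite} is genuinely uniform over the sup defining $\iota(B)$ thanks to $\mathbb Z$-homeomorphism invariance of Lebesgue measure. The one subtle point worth flagging is the role of $k$ being the \emph{minimal} number of generators: it is used implicitly to guarantee that the representatives $P$ live in $\I^k$ in such a way that being a closed domain of $\I^k$ is the correct condition---in lower ambient dimensions $P$ could never be a closed domain, whereas in higher dimensions the cylindrification phenomenon of the Example following Corollary \ref{corollary:index-free} would spuriously inflate the multiplicity.
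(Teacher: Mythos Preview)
Your proposal is correct and follows essentially the same route as the paper: identify $B$ with $\McN(P)$ for a $\mathbb Z$-retract $P\subseteq\I^k$, then invoke Theorem~\ref{theorem:converse} for one direction and Theorem~\ref{theorem:finite} for the other. The paper's own proof is terser---for the $(\Leftarrow)$ direction it simply says ``apply Theorem~\ref{theorem:finite}'' without spelling out why the bound is uniform over all retracts $A\cong B$; that uniformity is handled separately in Proposition~\ref{proposition:raffinanda}(b), whose argument (invariance of Lebesgue measure under $\mathbb Z$-homeomorphism, plus \cite[Corollary 3.10]{mun11}) is exactly what you reproduce. One small notational slip: in the bound $\binom{\lfloor 1/\lambda\rfloor}{k}$ the symbol $k$ in the paper denotes the number of connected components of $\interior(\mathsf R_\sigma)$, not the ambient dimension, so you have a harmless collision of names; and at the end $\mathsf r(\cdot)$ should be applied to the retract $\gen(\rho_1,\dots,\rho_k)$ rather than to $\McN(P)$.
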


 \begin{proof}
 Identify $B$ with $\McN(P)$ for some
 $\mathbb Z$-retract $P$ of $\I^k.$
 
 \smallskip
 $(\Rightarrow)$ 
 If  the maximal spectral space of $B$ is not  homeomorphic to
 a regular domain in $\I^k$, then the same holds for its
  homeomorphic copy
  $P.$  As a consequence,
every (equivalently, some) triangulation $\Delta$ of $P$ contains some
maximal $l$-simplex with $l<k.$  By the foregoing theorem, 
the index of $B$ is infinite. 

\smallskip
 $(\Leftarrow)$ If  the maximal spectral space of $B$ is  homeomorphic to
 a regular domain in $\I^k$, then so is its homeomorphic copy
  $P.$  Now apply Theorem \ref{theorem:finite}. 
 \end{proof}

\bigskip
 \section{Arbitrarily high finite index}

\begin{theorem}
\label{theorem:fibonacci}
 For every  $j=1,2,\dots$ there is
 retract   $A_j$  of  $\McNtwo$ such that 
 the  maximal spectral space of $A_j$  
is a closed domain in $\I^2$ 
and $j<\iota(A_j)\in\mathbb Z$.
\end{theorem}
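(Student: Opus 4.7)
The plan is to invoke Proposition~\ref{proposition:raffinanda}(a) together with Corollary~\ref{corollary:criterion}. For each $j$, it suffices to construct a two-dimensional rational polyhedron $P_j\subseteq \I^2$ that is simultaneously a $\mathbb Z$-retract of $\I^2$, a closed domain in $\I^2$, and such that $\I^2$ contains at least $j+1$ pairwise disjoint $\mathbb Z$-homeomorphic copies of $P_j$. The first two conditions guarantee that $A_j=\McN(P_j)$ is a well-defined retract of $\McNtwo$ with finite index (by Corollary~\ref{corollary:criterion}), while the third, via Proposition~\ref{proposition:raffinanda}(a), yields $\iota(A_j)\geq j+1>j$.

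To build such a $P_j$ I would take $P_j$ to be a small regular two-simplex of area on the order of $1/F_n^2$, where $n=n(j)$ is a suitably large Fibonacci index, with one vertex at $(0,0)$ and the other two vertices at two consecutive Fibonacci-denominator points of a Farey/Stern-Brocot fan inside $\I^2$. The $j+1$ pairwise disjoint $\mathbb Z$-homeomorphic copies are then obtained as successive cells of the Farey subdivision of $\I^2$, each related to $P_j$ by a local unimodular change of coordinate in $\mathrm{SL}(2,\mathbb Z)$. The key point is that although these $\mathrm{SL}(2,\mathbb Z)$-moves need not extend to global $\mathbb Z$-automorphisms of $\I^2$, their piecewise linear traces between individual Farey cells are genuine $\mathbb Z$-homeomorphisms, which is all Proposition~\ref{proposition:raffinanda}(a) requires. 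To verify that $P_j$ is actually a $\mathbb Z$-retract of $\I^2$ (not merely that $\McN(P_j)$ is projective) one invokes \cite[Theorem 5.2(iii)]{cabmun}, which identifies $\mathbb Z$-retracts of $\I^n$ with strongly regular polyhedra; the required retraction $\sigma\colon \I^2\to P_j$ is then produced by a De~Concini--Procesi desingularization (\cite[Theorem 5.3]{mun11}) of a rational triangulation of $\I^2$ refining one that already contains $P_j$ as a subcomplex.

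The main obstacle is the explicit Fibonacci packing: finding the right $P_j$ and verifying that its Farey-indexed copies inside $\I^2$ are pairwise disjoint and pairwise $\mathbb Z$-homeomorphic, with the number of available copies growing without bound as the Fibonacci denominator grows. The growth of Fibonacci denominators is essential: naive tilings by thin squares fail because $\mathbb Z$-homeomorphisms preserve the denominators of rational vertices, so non-integer translations cannot relate two $\mathbb Z$-homeomorphic copies of a fixed shape placed at unrelated positions of $\I^2$; the Farey/Stern-Brocot chain circumvents this by ensuring that neighbouring copies differ by a unimodular (hence denominator-preserving) change of coordinates. Once the packing is in place the theorem is immediate: finiteness of $\iota(A_j)$ follows from Corollary~\ref{corollary:criterion} because $P_j$ is a closed domain, and $\iota(A_j)\geq j+1$ follows from Proposition~\ref{proposition:raffinanda}(a).
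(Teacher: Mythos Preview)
Your overall strategy---reduce to Corollary~\ref{corollary:criterion} for finiteness and to Proposition~\ref{proposition:raffinanda}(a) for the lower bound---is natural, but there is a genuine gap in the packing step. Proposition~\ref{proposition:raffinanda}(a) requires \emph{pairwise disjoint} $\mathbb Z$-homeomorphic copies of $P_j$, not merely copies with disjoint interiors. The Farey/Stern--Brocot cells you describe all share the vertex $(0,0)$, so no two of them are disjoint. Worse, this cannot be repaired by relocating the copies: since $\mathbb Z$-homeomorphisms preserve denominators (\cite[Proposition~3.15]{mun11}), every $\mathbb Z$-homeomorphic copy of a triangle with a vertex of denominator~$1$ must itself have a vertex at one of the four corners of $\I^2$. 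Two disjoint copies therefore occupy different corners, and you can never exceed four pairwise disjoint copies, regardless of how small $P_j$ is. Thus Proposition~\ref{proposition:raffinanda}(a), as stated, cannot yield arbitrarily large lower bounds for triangles anchored at the origin. (Incidentally, \cite[Theorem~5.2(iii)]{cabmun} gives only the necessity of strong regularity for being a $\mathbb Z$-retract; the converse direction you need also requires contractibility and containing a vertex of the cube.)

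The paper's proof avoids this obstruction by bypassing Proposition~\ref{proposition:raffinanda}(a) entirely and working directly with Theorem~\ref{Theorem:bella}. Rather than exhibiting disjoint copies and then producing a retraction, the paper builds the retraction $\sigma^{(n)}\colon\I^2\to U_n$ explicitly as a composite $\rho^{(n)}\circ\cdots\circ\rho^{(1)}$ of elementary $\mathbb Z$-retractions dictated by successive Farey blow-ups. Because $\sigma^{(n)}$ is constructed globally, one can then read off $2^n$ triangles (sharing edges, all containing the origin) on each of which the single map $\sigma^{(n)}$ restricts to a $\mathbb Z$-homeomorphism onto $U_n$. Theorem~\ref{Theorem:bella} converts these $\mathbb Z$-homeomorphism domains directly into distinct retractions of $\McNtwo$ onto the same retract, with no disjointness hypothesis needed. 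Your Farey intuition is correct and is exactly what drives the paper's construction; what is missing from your outline is the realization that the many copies must be tied together by one coherent retraction, not assembled after the fact from separate pieces.
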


\begin{proof} For each rational point  $x=(x_1,x_2)\in \I^2$ let
$d=\den(x)$
\commento{L to D: Do we need this repeated definition of $\den$? }
 be the least common multiple of the denominators of  $x_1$ and $x_2$.
Then for uniquely determined integers  $n_1,n_2$ we can write  $x_1=n_1/d$
and $x_2=n_2/d$. Throughout this proof we will specify  $x$ in terms of its
{\it homogeneous integer coordinates} as in \cite[\S 2.1]{mun11}.
Identifying $x$ with its homogeneous correspondent we will write   
\begin{equation}
\label{equation:homogeneous}
x=(n_1/d,n_2/d) = [n_1,n_2,d].
\end{equation}
We will also write  $o$ for the origin $[0,0,1]$ in $\mathbb R^2$. 

The proof
amounts to a construction of  
$\mathbb Z$-retracts  $\sigma^{(1)}, \sigma^{(2)} ,\dots$
of $\I^2$ such that the multiplicity 
of the retract  $A_n= \range(-\circ \sigma^{(n)})$
is $>2^n.$
We assume familiarity with regular triangles, 
regular triangulations, and Farey blow-ups  \cite[\S 2.2, \S
5.1]{mun11}.

\bigskip
\noindent{\it  Step $0$.}  

Let the regular triangles  
$U_1,V_1\subseteq \I^2$ be defined by  
$$V_1=\conv(o, [1,1,1],[0,1,1])
\mbox{ and }
U_1=\conv(o,[1,0,1],[1,1,1]).$$
  Let  $\zeta^{(1)}\colon V_1\to
U_1$ be the unique linear extension of the map 
$$o\mapsto o,\,\,[1,1,1]\mapsto [1,1,1],  \,\,
[0,1,1]\mapsto [1,0,1].$$
 By 
\cite[Lemma 3.7, Corollary 3.10]{mun11},\,\,  $\zeta^{(1)}$ is a
$\mathbb Z$-homeomorphism  of $V_1$ onto $U_1$, in symbols,
$\zeta^{(1)}\colon V_1\cong_{\mathbb Z} U_1.$ 
Next let $\rho^{(1)}=\sigma^{(1)}=\zeta^{(1)}\cup \id_{U_1}$. Then  $\sigma^{(1)}$
is a $\mathbb Z$-retraction of $\I^2$ onto $U_1,$  acting $\mathbb Z$-homeomorphically
over $V_1$.  By Theorem~\ref{Theorem:bella},
 the multiplicity of the retract  $A_1=\range(-\circ \sigma^{(1)})$ 
is equal to $2^1.$

\medskip 
The {\it Fibonacci sequence}  $1,1,2,3,5,8,13,\dots$ be defined by
\begin{equation}
\label{equation:fibonacci}
\mathsf{F}_1=1,\,\,\mathsf{F}_2=1,\,\,\mathsf{F}_{n+1}=\mathsf{F}_{n}+\mathsf{F}_{n-1}.
\end{equation}

\bigskip
\noindent{\it  Step $1$.}

Let $\Sigma_1$ be the regular simplicial complex
given by $U_1$ and all its faces.
Let $b_1$ be the Farey mediant of the edge
of $U_1$ opposite to the origin $o$.
Then the blow-up of  $\Sigma_1$ at 
$b_1$ yields a regular simplicial complex,
whose maximal triangles $V_2,U_2$
are given by
$$V_2=\conv(o,[2,1,\mathsf F_3],[1,1,\mathsf F_2])
\mbox{ and }
U_2=\conv(o,[1,0,\mathsf F_1],[2,1,\mathsf F_3]).$$
  Let  $\zeta^{(2)}\colon V_2\to
U_2$ be the unique linear extension of the map 
$$o\mapsto o,\,\,[2,1,\mathsf F_3]\mapsto
[2,1,\mathsf F_3],\,\,\,[1,1,\mathsf F_2]\mapsto [1,0,\mathsf F_1].$$
Then    $\zeta^{(2)}$ is a
$\mathbb Z$-homeomorphism  of $V_2$ onto $U_2$, in symbols,
$\zeta^{(2)}\colon V_2\cong_{\mathbb Z} U_2.$ 
Next let $\rho^{(2)}=\zeta^{(2)}\cup \id_{U_2}$.  This is a
$\mathbb Z$-retraction of  $U_1$ onto $U_2$
 acting $\mathbb Z$-homeomorphically
over $V_2$. 
Let  
$$\sigma^{(2)}= \rho^{(2)}\circ \sigma^{(1)}= \rho^{(2)}\circ \rho^{(1)}.$$
This is a
$\mathbb Z$-retraction of  $\I^2$ onto $U_2$
 acting $\mathbb Z$-homeomorphically
over  the following $2^2$ triangles:
$U_2,V_2, (\zeta^{(1)})^{-1}(U_2), (\zeta^{(1)})^{-1}(V_2)$.
By Theorem \ref{Theorem:bella},
 the multiplicity of the retract  $A_2$ of  $\McNtwo$ defined by
$A_2=\range(-\circ \sigma^{(2)})$ is equal to $2^2.$

\bigskip
\noindent{\it Step $2$.} 

Let $\Sigma_2$ be the regular simplicial complex
given by the triangle
$$U_2=\conv(o,[1,0,\mathsf F_2],[2,1,\mathsf F_3])$$
 and all its faces.
In homogeneous coordinates,
let  $b_2=[3,1,\mathsf F_4]$ be the Farey mediant of the edge
$\conv([1,0,\mathsf F_2],[2,1,\mathsf F_3])$
of $U_2$ opposite to the origin $o$.
Then the blow-up of  $\Sigma_2$ at 
$b_2$ yields a regular simplicial complex,
whose maximal triangles $V_3,W_3$
are given by
$$V_3= \conv(o,[3,1,\mathsf F_4],[2,1,\mathsf F_3])
\mbox{ and }
W_3=\conv(o,[1,0,\mathsf F_2], [3,1,\mathsf F_4]).$$

We now let  $[1,0,\mathsf F_3]$
be  the Farey mediant of
$[1,0,\mathsf F_2]$ and $o=[0,0,\mathsf F_1].$
Let  $\mathcal W_3$ be the (regular) simplicial complex
given by  $W_3$ and all its faces.
By  
  blowing-up $\mathcal W_3$
  at  $[1,0,\mathsf F_3]$ we obtain 
a regular triangulation of $W_3$
whose maximal triangles $U_3$ and $T_3$
are given by
$$U_3= \conv(o,[1,0,\mathsf F_3],[3,1,\mathsf F_4])
\mbox{ and }
T_3=\conv([1,0,\mathsf F_3],[1,0,\mathsf F_2], [3,1,\mathsf F_4]).$$
By construction $U_2=V_3\cup W_3=V_3\cup U_3 \cup T_3$. 
 %
%

Let  $\zeta^{(3)}\colon V_3\to
U_3$ be the unique linear extension of the map 
$$o\mapsto o, \,\,\, [3,1,\mathsf F_4]\mapsto
[3,1,\mathsf F_4],\,\,\,[2,1,\mathsf F_3]\mapsto [1,0,\mathsf F_3].$$
As above,   $\zeta^{(3)}$ is a
$\mathbb Z$-homeomorphism  of $V_3$ onto $U_3$, in symbols,
$\zeta^{(3)}\colon V_3\cong_{\mathbb Z} U_3.$

Let $\lambda^{(3)}\colon T_3 \to U_3$ be the unique linear extension
of the map 
$$[1,0,\mathsf F_3]\mapsto [1,0,\mathsf F_3],
\,\,\,
[3,1,\mathsf F_4]\mapsto [3,1,\mathsf F_4],
\,\,\,[1,0,\mathsf F_2]\mapsto o.$$
Then the map   $\rho^{(3)}=\zeta^{(3)}\cup \lambda^{(3)}\cup \id_{U_3}$  is a
$\mathbb Z$-retraction of  $U_2$ onto $U_3$
 acting $\mathbb Z$-homeomorphically
over $V_3$ and, trivially, over $U_3$.
(Actually,  $\rho^{(3)}$ also 
acts $\mathbb Z$-homeomorphically
over  $T_3$, but for our purposes it is sufficient to
restrict attention to the action of $\rho^{(3)}$ 
over the two triangles $V_3$ and $U_3$.)
The map  
$$\sigma^{(3)}= \rho^{(3)}\circ \sigma^{(3)}
= \rho^{(3)}\circ \rho^{(2)}\circ \rho^{(1)}$$
 is a
$\mathbb Z$-retraction of  $\I^2$ onto $U_3$
 acting $\mathbb Z$-homeomorphically
over  (among others)  the following $2^3$ triangles:
\begin{equation}
\label{equation:foregoing}
U_3,\,\,V_3,\,\, (\zeta^{(2)})^{-1}(U_3),\,\, (\zeta^{(2)})^{-1}(V_3),\,\,\,
(\zeta^{(1)})^{-1}(\mbox{the foregoing 4 triangles}).
\end{equation} 
By Theorem \ref{Theorem:bella}(c), the
 multiplicity of the retract  $A_3$ of  $\McNtwo$ defined by
$A_3=\range(-\circ \sigma^{(3)})$ is 
\commento{L to D: Since we are not showing the other triangles $\mathbb Z$-homoemorphic to $U_3$ I think we should write here $\geq 2^3$ instead of $> 2^3$.}
  $\geq 2^3.$

\commento{L to D: Weren't we going to eliminate Step 3?}
\bigskip
\noindent{\it Step 3.} 

Let the regular simplex 
$\Sigma_{3}$ consist of the
  triangle  
  $$U_{3}=\conv(o,[1,0,\mathsf F_3],[3,1,\mathsf F_4])$$
   and all its faces.
In homogeneous coordinates,
let  $b_{3}=[4,1,\mathsf F_5]$ be the Farey mediant of the edge
$\conv([1,0,\mathsf F_3],[3,1,\mathsf F_4])$
of $U_3$ opposite to the origin $o$.
Then the blow-up of  $\Sigma_3$ at 
$b_3$ yields a regular simplicial complex,
whose maximal triangles $V_4,W_4$
are given by
$$V_4= \conv(o,[4,1,\mathsf F_5],[3,1,\mathsf F_4])
\mbox{ and }
W_4=\conv(o,[1,0,\mathsf F_3], [4,1,\mathsf F_5]).$$
We now let  $[1,0,\mathsf F_4]$
be  the Farey mediant of
$[1,0,\mathsf F_3]$ and $o=[0,0,\mathsf F_1].$

Let  $\mathcal W_4$ be the (regular) simplicial complex
given by  $W_4$ and all its faces.
By blowing-up $\mathcal W_4$
at  $[1,0,\mathsf F_4]$, we obtain
a regular triangulation of $W_4$
whose maximal triangles $U_4$ and $T_4$
are given by
$$U_4= \conv(o,[1,0,\mathsf F_4],[4,1,\mathsf F_5])
\mbox{ and }
T_4=\conv([1,0,\mathsf F_4],[1,0,\mathsf F_3], [4,1,\mathsf F_5]).$$
Observe that $U_3=V_4\cup W_4=V_4\cup U_4 \cup T_4$. 

Let  $\zeta^{(4)}\colon V_4\to
U_4$ be the unique linear extension of the map 
$$o\mapsto o, \,\,\, [4,1,\mathsf F_5]\mapsto
[4,1,\mathsf F_5],\,\,\,[3,1,\mathsf F_4]\mapsto [1,0,\mathsf F_4].$$
As above,  $\zeta^{(4)}$ is a
$\mathbb Z$-homeomorphism  of $V_4$ onto $U_4$, in symbols,
$\zeta^{(4)}\colon V_4\cong_{\mathbb Z} U_4.$

               \begin{figure} 
    \begin{center}                                     
    \includegraphics[height=8.9cm]{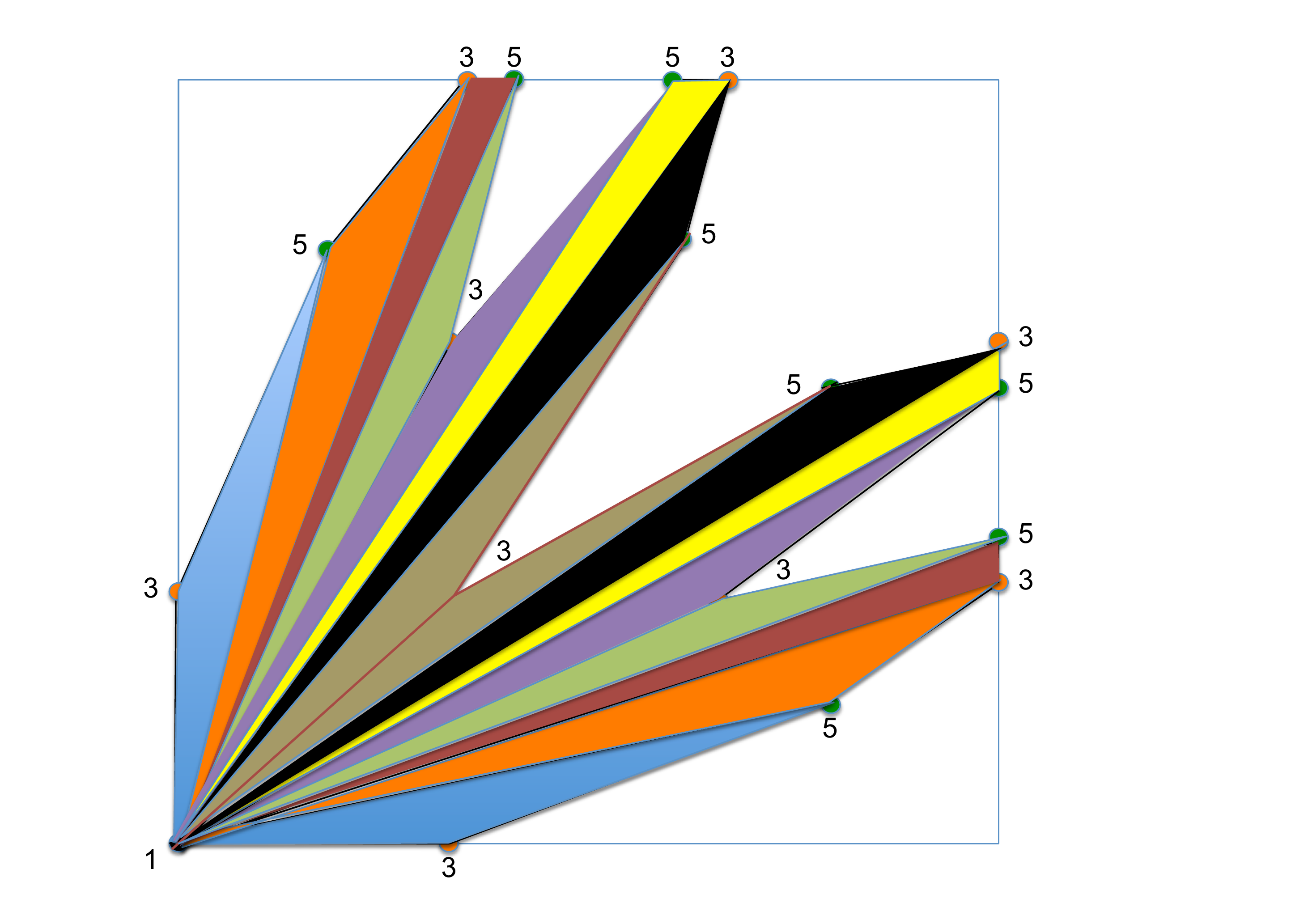}    
    \end{center}                                       
 \caption{\small  Sixteen $\mathbb Z$-homeomorphism
 domains of the map $\sigma^{(4)}$ of Step 3
 in the proof of Theorem \ref{theorem:fibonacci}.
Each $\mathbb Z$-homeomorphism
 domain is a regular triangle whose vertices have
 denominators 1,3,5.}  
    \label{figure:rainbow}                                                 
   \end{figure}

Let $\lambda^{(4)}\colon T_4 \to U_4$ be the unique linear extension
of the map 
$$[1,0,\mathsf F_4]\mapsto [1,0,\mathsf F_4],
\,\,\,
[4,1,\mathsf F_5]\mapsto [4,1,\mathsf F_5],
\,\,\,[1,0,\mathsf F_3]\mapsto o.$$
Then the map   
$$\rho^{(4)}=\zeta^{(4)}\cup \lambda^{(4)}\cup \id_{U_4}$$  is a
$\mathbb Z$-retraction of  $U_3$ onto $U_4$
 acting $\mathbb Z$-homeomorphically
over $V_4$. 
The map  
$$\sigma^{(4)}= \rho^{(4)}\circ \sigma^{(4)}
= \rho^{(4)}\circ \rho^{(3)}\circ \rho^{(2)}\circ  \rho^{(1)}$$
 is a
$\mathbb Z$-retraction of  $\I^2$ onto $U_4$
 acting $\mathbb Z$-homeomorphically
over  the following $2^4$ triangles:
$$
U_4,\,\,V_4,\,\, (\zeta^{(3)})^{-1}(U_4),\,\, (\zeta^{(3)})^{-1}(V_4),\,\,\,
\mbox{etc. etc. etc. unfolding}.
$$
(As in the previous step,  
$\sigma^{(4)}$ acts $\mathbb Z$-homeomorphically
over other triangles, but for our present purposes
it is convenient to restrict attention to these $2^4$ only.
See Figure \ref{figure:rainbow}.) 
By Theorem \ref{Theorem:bella}(c), the
 multiplicity of the retract  $A_4$ of  $\McNtwo$ defined by
$A_4=\range(-\circ \sigma^{(4)})$ is 
$\geq 2^4.$
 
\bigskip
\noindent{\it Step $n-1$,  $(n=5,6,\dots).$}

Inductively let the regular simplex
$\Sigma_{n-1}$ consist of the
  triangle  
  $$U_{n-1}=\conv(o,[1,0,\mathsf F_{n-1}],[n-1,1,\mathsf F_{n}])$$
   and all its faces. 
In homogeneous coordinates,
let  $b_{n-1}=[n,1,\mathsf F_{n+1}]$ be the Farey mediant of the edge
$\conv([1,0,\mathsf F_{n-1}],[n-1,1,\mathsf F_n])$
of $U_{n-1}$ opposite to the origin $o$.
Then the blow-up of  $\Sigma_{n-1}$ at 
$b_{n-1}$ yields a regular simplicial complex,
whose maximal simplexes  $V_n,W_n$
are given by
$$V_n= \conv(o,[n,1,\mathsf F_{n+1}],[n-1,1,\mathsf F_n])
\mbox{ and }
W_n=\conv(o,[1,0,\mathsf F_{n-1}], [n,1,\mathsf F_{n+1}]).$$
Let the regular triangle
$U_n\subseteq W_n$ be given by 
$U_n=\conv(o,[1,0,\mathsf F_{n}], [n,1,\mathsf F_{n+1}]).$
Let  $\zeta^{(n)}\colon V_n\to
U_n$ be the unique linear extension of the map 
$$o\mapsto o, \,\,\, [n,1,\mathsf F_{n+1}]\mapsto
[n,1,\mathsf F_{n+1}],\,\,\,[n-1,1,\mathsf F_n]\mapsto [1,0,\mathsf F_n].$$
The regularity of $V_n$ and $U_n$ ensures that
 $\zeta^{(n)}$ is a
$\mathbb Z$-homeomorphism  of $V_n$ onto $U_n$, in symbols,
$\zeta^{(n)}\colon V_n\cong_{\mathbb Z} U_n.$

For each $j=0,\dots,\mathsf F_{n-2}-1,$ let the triangle
$T_{n,j}$ be defined by
$$
T_{n,j}=\conv([n,1,\mathsf F_{n+1}],[1,0,\mathsf F_{n-1}+j],[1,0,\mathsf F_{n-1}+j+1]).
$$

               \begin{figure} 
    \begin{center}                                     
    \includegraphics[height=8.9cm]{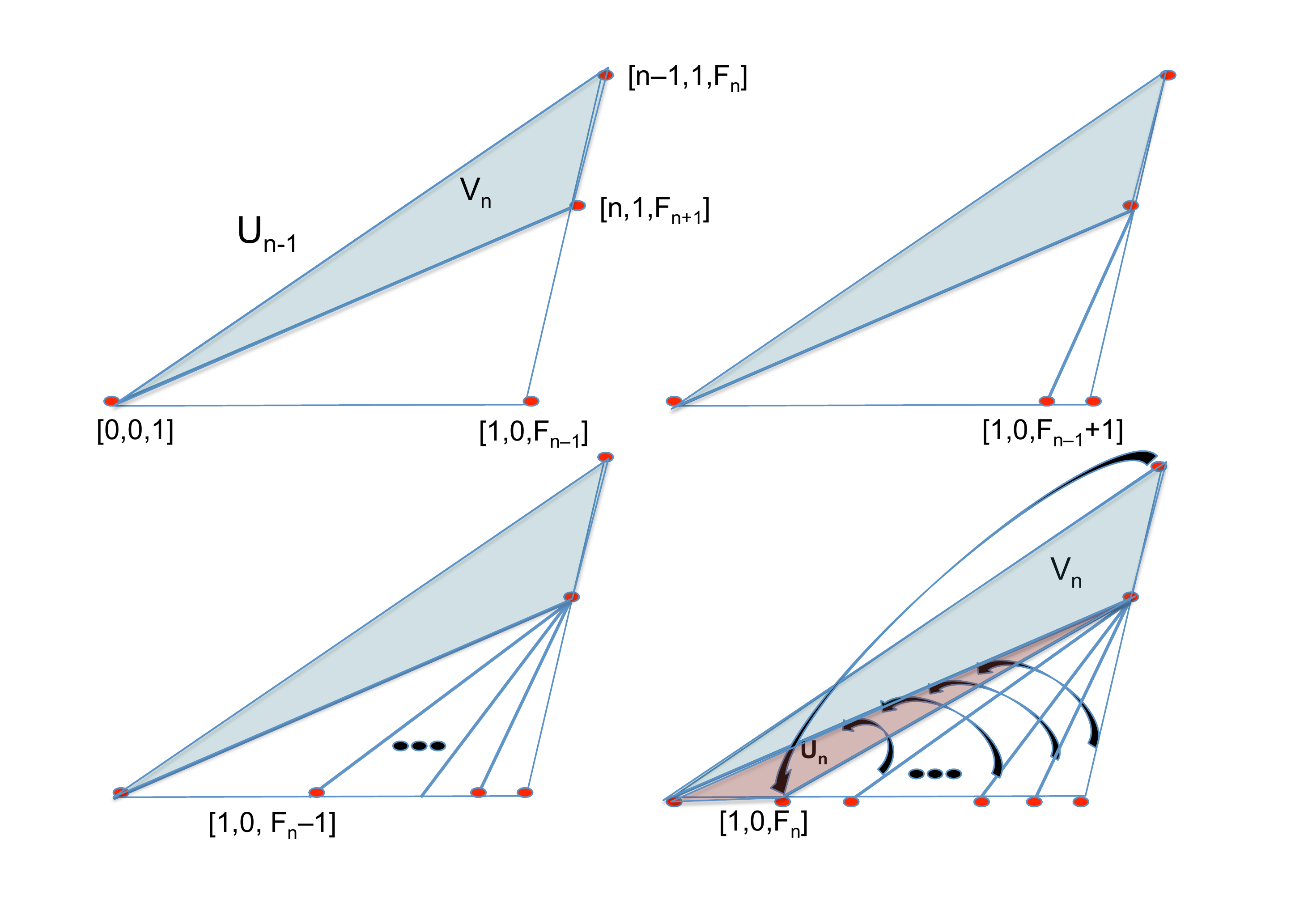}    
    \end{center}                                       
 \caption{\small  The sequence of Farey blow-ups
and retractions in the proof of  Theorem
\ref{theorem:fibonacci}.}  
    \label{figure:howto}                                                 
   \end{figure}

A direct verification shows that every 
 $T_{n,j}$ is regular.
As a matter of fact, the triangle 
  $W_n=\conv(o,[1,0,\mathsf F_{n-1}],[n,1,\mathsf F_{n+1}])$
 is regular; the points  $[1,0,\mathsf F_{n-1}+1]$,
$[1,0,\mathsf F_{n-1}+2],\dots,[1,0,\mathsf F_{n-1}+\mathsf F_{n-2}-1],
[1,0,\mathsf F_{n-1}+\mathsf F_{n-2}]=[1,0,\mathsf F_{n}]$,
are obtained by taking the (always Farey)
 mediant $[1,0,\mathsf F_{n-1}+1]$  of  $o$ and  
 $[1,0,\mathsf F_{n-1}]$, and then taking the mediant 
  $[1,0,\mathsf F_{n-1}+2]$   of  $o$ and  
 $[1,0,\mathsf F_{n-1}+1]$,\dots, 
 and finally taking the mediant  $[1,0,\mathsf F_n]$  
of  $o$ and
 $[1,0,\mathsf F_{n}-1]
 =[1,0,\mathsf F_{n-1}+\mathsf F_{n-2}-1]$. 
Let $\mathcal W_n$ be the regular
  simplicial complex given
by $W_n$ and all its faces.
Then  $U_n$ and the $T_{n,j}$ are
the maximal simplexes of a regular
triangulation of $W_n,$  which is obtained
from $\mathcal W_n$ 
by consecutive  Farey blow-ups as described 
in  Figure \ref{figure:howto}.
Observe that $U_{n-1}=V_n\cup W_n
=V_n\cup U_n \cup \bigcup_j T_{n,j}$.

For each  $j=0,\dots,\mathsf F_{n-2}-1$,  let  
$$
\lambda^{(n)}_j\colon \conv([1,0,\mathsf F_n],
[n,1,\mathsf F_{n+1}],
[1,0,\mathsf F_{n-1}+j])\to U_n
$$
 be the unique linear extension
of the map 
$$[1,0,\mathsf F_n]\mapsto [1,0,\mathsf F_n],
\,\,\,
[n,1,\mathsf F_{n+1}]\mapsto [n,1,\mathsf F_{n+1}],
\,\,\,[1,0,\mathsf F_{n-1}+j]\mapsto o.$$
By \cite[Lemma 3.7]{mun11},
 each  $\lambda^{(n)}_j$ is linear with integer coefficients 
  (i.e.,  $\lambda^{(n)}_j$  is 
 a linear $\mathbb Z$-map)
sending the regular
 triangle $\conv([n,1,\mathsf F_{n+1}],
 [1,0,\mathsf F_n],[1,0,\mathsf F_n-1])$ 
onto $U_n$, and mapping all other  triangles
 $T_j$ onto the segment
$\conv(o,[n,1,\mathsf F_{n+1}])\subseteq U_n.$
The  map   
$$\rho^{(n)}=\zeta^{(n)}\cup \id_{U_3}
\cup \bigcup_{j=0}^{-1+\mathsf F_{n-2}}
\lambda_j^{(n)}$$
  is a
$\mathbb Z$-retraction of  $U_{n-1}$ onto $U_n$
 acting $\mathbb Z$-homeomorphically
over $V_n$. 
The map  
\[\sigma^{(n)}= \rho^{(n)}\circ \sigma^{(n-1)}= \rho^{(n)}\circ \rho^{(n-1)}\circ
\dots\circ  \rho^{(1)}
\]
 is a
$\mathbb Z$-retraction of  $\I^2$ onto $U_n$. 
Generalizing
\eqref{equation:foregoing},  $\sigma^{(n)}$  is a
$\mathbb Z$-ho\-m\-eo\-mor\-phi\-sm   onto  $\,\,U_n\,\,$ 
of each one
of the following $\,\,2^n\,\,$ triangles:
$U_n,\,\,V_n,\,\,$ $ (\zeta^{(n-1)})^{-1}(U_n), \,\,$
$(\zeta^{(n-1)})^{-1}(V_n),\,$
$(\zeta^{(n-2)})^{-1}(\mbox{the foregoing four triangles}),$
$\,\,\dots,\,\, (\zeta^{(2)})^{-1}$(the foregoing  $2^{n-2}$  triangles)\,\,\,
$(\zeta^{(1)})^{-1}$ (the foregoing  $2^{n-1}$  triangles).
Actually, $\sigma^{(n)}$  is a
$\mathbb Z$-ho\-m\-eo\-mor\-phi\-sm  
also of other triangles onto  $\,\,U_n\,\,$,
 but these are irrelevant to our purposes.
By Theorem \ref{Theorem:bella},   the
 multiplicity of the retract  $A_n=\range(-\circ \sigma^{(n)})$ of  $\McNtwo$
 is  
$\geq 2^n$.
 Since the area of 
 $U_n$ is $>0$, by  \eqref{equation:uniform-bound} the
 multiplicity of $A_n$ is finite.

\medskip
Iterating this inductive procedure
we obtain  retracts  $A_m$  of $\McNtwo$ whose maximal ideal
space is a closed domain in $\I^2$,  and whose multiplicity 
and index are
finite and arbitrarily large. 
\end{proof}

\begin{corollary}  Adopt the notation
of \eqref{equation:homogeneous}-\eqref{equation:fibonacci}.
For each  $n=1,2,\dots$ let the triangle
$U_n\subseteq \I^2$ be defined by
$U_n=\conv([0,0,1],[1,0,\mathsf F_n],[n,1,\mathsf F_{n+1}])$.
Then $2^n\leq \iota(\McN(U_n))=\iota(\McNr(U_n))
\in \mathbb Z.$ 
\end{corollary}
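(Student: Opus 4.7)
The plan is to reduce the corollary to Theorem~\ref{theorem:fibonacci}, Corollary~\ref{corollary:criterion} and the $\Gamma$-equivalence; no fresh construction is required.

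First I would verify that $2$ is the smallest number of generators of $\McN(U_n)$, so that $\iota(\McN(U_n))$ is genuinely computed from retracts of $\McNtwo$. Since $U_n$ is a non-degenerate $2$-simplex, it is two-dimensional and satisfies $U_n=\cl(\interior(U_n))$, i.e., it is a closed domain in $\I^2$. If $\McN(U_n)$ were $1$-generated, the argument in the proof of Proposition~\ref{proposition:raffinanda}(a) (relying on \cite[Theorem 3.6.7]{cigdotmun} and \cite[Corollary 4.18]{mun11}) would force $\dim(U_n)\leq 1$, a contradiction.

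Next, the lower bound $\iota(\McN(U_n))\geq 2^n$ is immediate from Step $n-1$ in the proof of Theorem~\ref{theorem:fibonacci}: the $\mathbb Z$-retraction $\sigma^{(n)}$ constructed there lands on $U_n$ and admits at least $2^n$ pairwise distinct $\mathbb Z$-homeomorphism domains sitting inside $\I^2$. By Theorem~\ref{Theorem:bella}(b), these domains are in bijective correspondence with $2^n$ distinct retractions of $\McNtwo$ onto $A_n=\gen(\sigma_1^{(n)},\sigma_2^{(n)})$; and by \cite[Lemma 3.6]{mun11} one has $A_n\cong \McN(U_n)$. Combined with the previous paragraph, the definition of the index yields $\iota(\McN(U_n))\geq 2^n$.

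For finiteness, Corollary~\ref{corollary:criterion} applies verbatim: $U_n$ is a closed domain in $\I^2$ and is homeomorphic to the maximal spectral space of $\McN(U_n)$, whence $\iota(\McN(U_n))\in\mathbb Z$. The identity $\iota(\McN(U_n))=\iota(\McNr(U_n))$ finally transfers across $\Gamma$ using the preservation of retractions, projectivity and minimal generators, in the same spirit in which Corollary~\ref{theorem:finite-bis} transfers Theorem~\ref{theorem:finite}. I do not anticipate any real obstacle: the only point worth double-checking is that the $2^n$ $\mathbb Z$-homeomorphism domains produced in Theorem~\ref{theorem:fibonacci} give rise to $2^n$ \emph{distinct} retractions onto the same retract $A_n$, but this was already settled by the application of Theorem~\ref{Theorem:bella}(b) inside that proof.
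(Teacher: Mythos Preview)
Your proposal is correct and follows essentially the same route as the paper: identify $A_n\cong\McN(U_n)$ via \cite[Lemma 3.6]{mun11}, read off the lower bound $2^n$ from the construction in Theorem~\ref{theorem:fibonacci}, and transfer to $\McNr(U_n)$ via~$\Gamma$. You are in fact more thorough than the paper's two-line proof, which leaves implicit both the verification that $2$ is the minimal number of generators of $\McN(U_n)$ and the finiteness of the index (the latter you correctly extract from Corollary~\ref{corollary:criterion}).
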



\begin{proof}  
By \cite[Lemma 3.6]{mun11}, the retract  
$A_n=\range(-\circ \sigma^{(n)})$
of  $\McNtwo$  
 in the foregoing theorem
 is isomorphic to $\McN(U_n)$. 
 So  $\iota(\McN(U_n))\geq 2^n.$
 The preservation
 properties of the  $\Gamma$ functor ensure that
 $\iota(\McNr(U_n))=\iota(\McN(U_n))$.
 %
 %
 %
 %
 %
\end{proof}

\begin{corollary}
\label{theorem:fibonacci-bis}
 For every  $j=1,2,\dots,$ there is
 retract   $R_j$  of  the free unital $\ell$-group
 $\McNr(\I^2)$ such that  $\iota(R_j)>j,$
and   the maximal spectral space of $R_j$  
is a closed domain in $\I^2$.
\end{corollary}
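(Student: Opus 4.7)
The plan is to transfer the content of Theorem \ref{theorem:fibonacci} to the category of unital $\ell$-groups via the $\Gamma$ equivalence, in exactly the same spirit as the passage from Theorem \ref{theorem:finite} to Corollary \ref{theorem:finite-bis}. So there is essentially no new mathematical content to prove; the task is only to check that the three relevant quantities (being a retract of the free object, the index, and the spectral space being a closed domain in $\I^2$) all transfer along $\Gamma$.

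First, I would apply Theorem \ref{theorem:fibonacci} to obtain, for each $j\geq 1$, an MV-algebra retract $A_j$ of $\McNtwo$ such that $\mu_{A_j}$ is a closed domain in $\I^2$ and $j<\iota(A_j)\in\mathbb Z$. Setting $R_j=\Gamma^{-1}(A_j)$, the fact that $\Gamma$ is an equivalence of categories that sends $\McNr(\I^2)$ to $\McNtwo$ (by \cite[Theorem 4.15]{mun-jfa}) implies that $R_j$ is a retract of $\McNr(\I^2)$: idempotent endomorphisms on the MV-algebra side correspond bijectively to idempotent endomorphisms of unital $\ell$-groups on the other side, so any retraction $\epsilon\colon\McNtwo\to\McNtwo$ with image $A_j$ lifts uniquely to a retraction $\Gamma^{-1}(\epsilon)\colon\McNr(\I^2)\to\McNr(\I^2)$ with image $R_j$.

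Next, I would show that the multiplicities (and hence the indices) match. The bijection between retractions produced above preserves the image, so it restricts to a bijection between retractions of $\McNr(\I^2)$ onto $R_j$ and retractions of $\McNtwo$ onto $A_j$. In particular $\mathsf{r}(R_j)=\mathsf{r}(A_j)$, and because $\Gamma$ also preserves isomorphism classes and generating sets, it identifies the set of retracts of $\McNr(\I^2)$ isomorphic to $R_j$ with the set of retracts of $\McNtwo$ isomorphic to $A_j$, and it preserves the smallest number of generators. Taking the supremum defining $\iota(\cdot)$ therefore gives $\iota(R_j)=\iota(A_j)>j$, and this common value is a finite integer.

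Finally, by \cite[\S 7.2]{cigdotmun} the maximal spectral space $\mu_{R_j}$ is canonically homeomorphic to $\mu_{A_j}$, so the closed-domain-in-$\I^2$ property of $\mu_{A_j}$ is inherited by $\mu_{R_j}$. The only mildly delicate point is to confirm that the smallest number of generators transfers along $\Gamma$, so that the index is being computed in the same ambient dimension on both sides; this is immediate from the fact that $\Gamma$ and $\Gamma^{-1}$ send generating sets to generating sets (the unit plays no special role among the generators beyond being an image of one of them). No other obstacle is anticipated.
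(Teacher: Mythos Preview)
Your proposal is correct and follows exactly the route the paper intends: the corollary is stated in the paper without proof because it is meant to be an immediate transfer of Theorem~\ref{theorem:fibonacci} along the $\Gamma$ equivalence, just as Corollary~\ref{theorem:finite-bis} was obtained from Theorem~\ref{theorem:finite}. Your explicit check that retractions, multiplicities, indices, smallest number of generators, and the closed-domain property of the maximal spectrum all pass through $\Gamma$ is precisely the argument the paper leaves implicit.
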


\medskip

While the index of a finitely generated projective
MV-algebra  arises
by taking the {\it sup} of multiplicities,  taking the 
{\it inf\/} is of little  interest:

\begin{proposition}
\label{proposition:ieri}
Let $A$ be a retract of $\McNn$, say
$A=\range(-\circ\rho)$ for some  $\mathbb Z$-retraction
$\rho$ of  $\I^n$ onto the rational polyhedron  $P$. 
Suppose  $P$   
  is a closed domain in $\I^n$.
Then $A$ has an isomorphic
copy  $A'=\range(-\circ\rho')$ where  $\rho'$ 
is a  $\mathbb Z$-retraction of
 $\I^n$ onto $P$ and the multiplicity of $A'$ is
 equal to $1$.
\end{proposition}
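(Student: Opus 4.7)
The plan is to exhibit a single $\mathbb{Z}$-retraction $\rho' \colon \I^n \to \I^n$ with range $P$ whose unique $\mathbb{Z}$-homeomorphism domain, in the sense of Theorem~\ref{Theorem:bella}(b), is $P$ itself. Given such a $\rho'$, the MV-algebra $A' = \range(-\circ \rho') = \gen(\rho'_1, \dots, \rho'_n)$ has multiplicity $1$ by Theorem~\ref{Theorem:bella}(b), and is isomorphic to $A$ because both equal $\McN(P)$ up to isomorphism by \cite[Lemma 3.6]{mun11}.

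The structural property I will impose on $\rho'$ is $\rho'(\I^n \setminus P) \subseteq \partial P$, where $\partial P$ denotes the topological boundary of $P$ in $\mathbb{R}^n$. Since $P = \cl(\interior(P))$ is an $n$-dimensional closed domain, $\partial P$ is $(n-1)$-dimensional and has $n$-dimensional Lebesgue measure zero. Assuming $\rho'$ has this property, here is how uniqueness of the $\mathbb{Z}$-homeomorphism domain follows. Let $Q \subseteq \I^n$ be a rational polyhedron with $\rho' \restrict Q \colon Q \cong_{\mathbb{Z}} P$. Since $\mathbb{Z}$-homeomorphisms preserve local dimension and the Lebesgue measure of $n$-dimensional pieces (cited in the proof of Theorem~\ref{theorem:finite}), $Q$ is an $n$-dimensional closed domain with the same $n$-volume as $P$. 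Because $\rho' \restrict P = \id_P$ and $\rho'(Q \setminus P) \subseteq \partial P$, the surjection $\rho'(Q) = P$ forces $\interior(P) \subseteq Q \cap P$, so $\interior(Q) \setminus \interior(P)$ is an open subset of $\mathbb{R}^n$ contained in the null set $Q \setminus \interior(P)$. Hence this open set is empty, $\interior(Q) \subseteq P$, and $Q = \cl(\interior(Q)) \subseteq P$. Finally, $\rho' \restrict Q = \id_Q$ is bijective onto $P$, so $Q = P$.

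It remains to construct $\rho'$. I set $\rho' = \rho \circ \mu$, where $\mu \colon \I^n \to \I^n$ is a $\mathbb{Z}$-map with $\mu \restrict P = \id_P$ and $\mu(\I^n \setminus P) \subseteq \partial P$. Then $\rho'$ is a composition of $\mathbb{Z}$-maps; $\rho' \restrict P = \rho \restrict P = \id_P$; and $\rho'(\I^n \setminus P) \subseteq \rho(\partial P) = \partial P$ because $\rho \restrict P = \id$. To build $\mu$, start from a regular triangulation $\Sigma$ of $\I^n$ extending a regular triangulation of $P$ (existing by \cite[Corollary 2.10]{mun11}). Fix a rational vertex $w \in \partial P$. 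Apply Farey blow-ups to $\Sigma$ so that $w$ becomes a vertex of the refined triangulation and every vertex $v$ of the refined $\Sigma$ lying outside $P$ has denominator divisible by $\den(w)$. Define $\mu$ on vertices by $\mu(v) = v$ for $v \in P$ and $\mu(v) = w$ for $v \notin P$, and extend linearly on each simplex; by \cite[Lemma 3.7]{mun11} the divisibility condition $\den(\mu(v)) \mid \den(v)$, together with regularity of the triangulation, guarantees that each linear piece has integer coefficients.

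The main obstacle is this last construction: one must verify that repeated Farey blow-ups on $\Sigma$ can simultaneously introduce $w$ as a vertex and enforce the divisibility $\den(w) \mid \den(v)$ at every vertex $v$ outside $P$, while preserving the regularity of $\Sigma$ (so that \cite[Lemma 3.7]{mun11} applies on each maximal simplex). Once $\mu$ is in hand, the remaining measure-theoretic argument parallels the one in the proof of Theorem~\ref{theorem:finite}; the novelty is to produce the degenerate retraction $\rho'$ which crushes the complement of $P$ onto the lower-dimensional set $\partial P$, thereby eliminating every $\mathbb{Z}$-homeomorphism domain other than $P$.
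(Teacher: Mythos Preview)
Your reduction---build a $\mathbb Z$-retraction $\rho'$ onto $P$ with $\rho'(\I^n\setminus P)\subseteq\partial P$ and then run the measure argument---is sound, but the construction of $\mu$ does not deliver this property. The divisibility obstacle you flag is in fact insurmountable as stated: Farey blow-ups only \emph{add} vertices, so every corner of $\I^n$ lying outside $P$ survives with denominator $1$ in any refinement, forcing $\den(w)=1$; yet $\partial P$ need contain no corner of the cube (for $P=[0,\tfrac12]\times\I$ one has $\partial P=\{\tfrac12\}\times\I$). More fundamentally, even granting a $\mathbb Z$-map $\mu$ that fixes $P$ and sends every outside vertex to a single point $w\in\partial P$, the conclusion $\mu(\I^n\setminus P)\subseteq\partial P$ is false in general. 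On a maximal simplex $S$ with an $(n{-}1)$-face $F\subseteq\partial P$ and opposite vertex $v\notin P$, the linear extension gives $\mu(S)=\conv(F,w)$, which is $n$-dimensional whenever $w\notin\aff(F)$. As soon as $\partial P$ is not contained in a single hyperplane through $w$, some such $F$ exists, and $\mu(S\setminus P)$ is full-dimensional---typically not even contained in $P$. Thus the structural hypothesis $\rho'(\I^n\setminus P)\subseteq\partial P$ collapses and the uniqueness argument no longer applies.

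The paper avoids any global collapse. It argues by descent on the multiplicity $m$: pick a second $\mathbb Z$-homeomorphism domain $Q_2$ of $\rho$, place a small regular $n$-simplex $T$ inside a component of $\interior(Q_2)$ disjoint from $\interior(P)$, and define a $\mathbb Z$-map $\zeta$ equal to the identity off $\interior(T)$ but sending one vertex $t_0$ of $T$ to a nearby vertex $t^*$ of the same denominator, so that $\zeta$ is not injective on $T$. Then $\rho\circ\zeta$ is again a $\mathbb Z$-retraction onto $P$ but fails to act $\mathbb Z$-homeomorphically on $Q_2$, so the multiplicity drops; iterate $m-1$ times. Only a compactly supported local fold is needed at each step, which is why the global crushing you attempt is never required.
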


\begin{proof}
If the multiplicity of $A$ is 1 we are done.
Otherwise, let $m>1$ be the multiplicity of $A$.
The finiteness of $m$ follows from
Theorem \ref{theorem:finite} since by assumption,
 $P=\cl(\interior(P))$.
By Theorem \ref{Theorem:bella},   there
are exactly  $m$ rational polyhedra
$P=Q_1,Q_2,\dots,Q_m\subseteq \I^n$ such that
for each $i=1,\dots,m,\,\,\,$ $\rho\restrict Q_i$ is
a $\mathbb Z$-homeomorphism of $Q_i$ onto $P.$
Now consider $Q_2$.   By the final part of the proof of  Theorem
\ref{theorem:finite} some connected component,
say $Q$, of the interior of $Q_2$ is disjoint from the
interior of $P.$ Let  
 $\nabla$ be a regular triangulation of $\I^n$
 having the following properties:
 \begin{itemize}
\item[(i)]   $\nabla$
{\it linearizes}  $\rho$ (i.e., $\rho$ is linear over each
simplex of $\nabla$);
\item[(ii)] 
 $\nabla$ has an  
$n$-simplex  $T=\conv(t_0,t_1,\dots,t_n)$ lying in the interior of $Q$,
where 
 $d=\den(t_0)$;

\item[(iii)]   
$\nabla$ also has a vertex 
$t^* \in Q\setminus T$ of denominator  $d$.
\end{itemize}
The existence of  $\nabla$ is ensured by \cite[Proposition 3.2]{mun11}.
Applying  \cite[Lemma 3.7]{mun11} to the
regular simplex  $T$ and to   the
$n+1$ rational points
$t^*,t_1,\dots,t_n$
we obtain a
$\mathbb Z$-map  $\zeta\colon \I^n\to\I^n$ such that
 $\zeta$  is identity over  $\I^n\setminus \interior(T)$,
$\zeta(t_0)=t^*,$  and $\zeta(T)$ is contained in $Q$.  
It follows that   $\zeta\restrict U$ is not one-one.

Thus the composite $\mathbb Z$-map $\rho^{(1)}=\rho\circ\zeta$, while
being a $\mathbb Z$-retraction of $\I^n$ onto $P$, does not
act $\mathbb Z$-homeomorphically over $Q_2$. 
By Theorem \ref{Theorem:bella},
the multiplicity of the retract  $A_1= \range(-\circ\rho^{(1)})$
is equal to  $m-1$. Proceeding in this way we can find
a  $\mathbb Z$-retraction $\rho^{(m-1)}$ of $\I^n$ onto $P$ such that
the multiplicity of the  retract  $A_{m-1}=\range(-\circ\rho^{(m-1)})$
is equal to 1.  Since all $\mathbb Z$-retractions
$\rho^{(1)},\dots,\rho^{(m-1)}$ are onto the same rational
polyhedron $P$,  $A_{m-1}$ is isomorphic to
$A$.  Now set ${A'=A_{m-1}}$. \end{proof}

\section{Comparing retracts of free MV-algebras and unital $\ell$-groups}

Two sets  $A,B $ are said to be {\it comparable} if either $A\subseteq B$ or 
$B\subseteq A$.

\begin{proposition}
\label{proposition:counting}
Any two  $\mathbb Z$-homeomorphic comparable rational polyhedra
$P,Q\subseteq \I^n$ are equal.
However, two isomorphic comparable finitely presented
subalgebras of $\McNn$ need not be equal.
\end{proposition}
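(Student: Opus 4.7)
The plan is to prove the first assertion by combining denominator-preservation under $\mathbb Z$-homeomorphisms with the density of rational points in every rational polyhedron, and then to exhibit an explicit counterexample to the second assertion inside $\McN(\I)$.

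For the first assertion, assume without loss of generality that $P\subseteq Q$ and fix a $\mathbb Z$-homeomorphism $h\colon P\to Q$. Every $\mathbb Z$-map sends a rational point $x$ to a rational point whose denominator divides $\den(x)$; applying this to both $h$ and $h^{-1}$ yields $\den(h(x))=\den(x)$ for every rational $x\in P$. Hence, for each $d=1,2,\dots$, $h$ restricts to a bijection between the (finite) sets of rational points of denominator $d$ in $P$ and in $Q$. Since $P\subseteq Q$ the first set is contained in the second, and equal cardinalities force equality. Taking the union over $d$, the rational points of $P$ and $Q$ coincide as subsets of $\I^n$. Fixing a rational triangulation of $Q$, rational barycentric combinations of the rational vertices are dense in each simplex, whence the rational points are dense in $Q$, and likewise in $P$. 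As $P$ and $Q$ are both closed in $\mathbb R^n$ and share the same dense subset, $P=Q$.

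For the second assertion, let $f\in\McN(\I)$ be the tent McNaughton function $f(x)=\min(2x,2-2x)$, so that $f(0)=f(1)=0$, $f(1/2)=1$, and $\range(f)=\I$. Set $A=\gen(f)$ and $B=\McN(\I)$, both finitely presented. The homomorphism $\McN(\I)\to\McN(\I)$ sending $g\mapsto g\circ f$ has image $A$; since $\range(f)=\I$, its kernel $\{g\mid g\circ f=0\}=\{g\mid g\restrict\range(f)=0\}$ is trivial, whence $A\cong\McN(\I)=B$. Comparability is clear from $A\subseteq B$, and the inclusion is proper: if $\pi_1=g\circ f$ for some $g\in\McN(\I)$, then $x=g(f(x))$ for every $x\in\I$, giving both $g(0)=g(f(0))=0$ and $g(0)=g(f(1))=1$, a contradiction.

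The main obstacle is the first part, which pivots on the exact (rather than merely divisible) preservation of denominators by $\mathbb Z$-homeomorphisms; once this combinatorial invariant is available, the inclusion $P\subseteq Q$ does the remaining work through a pigeonhole-then-density argument. The counterexample is essentially forced by the tent map, which is the simplest non-injective surjective McNaughton self-map of $\I$.
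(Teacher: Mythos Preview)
Your proof is correct and follows essentially the same approach as the paper: the first assertion rests on the fact that $\mathbb Z$-homeomorphisms preserve the number of rational points of each fixed denominator, and the second is settled by an explicit surjective non-injective McNaughton self-map of $\I$. The only cosmetic difference is that the paper uses $x\oplus x$ rather than the tent map for the counterexample, and argues the first part by contrapositive instead of via density, but the underlying ideas are identical.
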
 

\begin{proof} 
Suppose  $P\subseteq Q$
and $P\not=Q.$  Then
for some suitably large  integer $d$ the number of points
of denominator $d$ in $P$ is strictly less than in $Q$.
By \cite[Proposition 3.15]{mun11}, 
 $P$ and $Q$ are not $\mathbb Z$-homeomorphic.
For the second statement,    the subalgebra of $\McN(\I)$
generated by $x\oplus x$  is isomorphic to
$\McN(\I)$  but is not equal to $\McN(\I)$.
\end{proof}

\begin{theorem}
\label{theorem:criterion}
Retracts $A,B$ of $\McNn$ are equal iff
they are comparable and isomorphic.
\end{theorem}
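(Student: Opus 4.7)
The forward direction is immediate. For the converse, assume without loss of generality $A\subseteq B$, and use \eqref{equation:correspondence} to pick $\mathbb Z$-retractions $\sigma,\tau$ of $\I^n$ with $A=\gen(\sigma_1,\dots,\sigma_n)$ and $B=\gen(\tau_1,\dots,\tau_n)$. The inclusion $A\subseteq B$ says $\sigma_i\in B$, so the retraction $\McN_\tau=-\circ\tau$ fixes each $\sigma_i$; coordinate-wise this unfolds to the identity $\sigma\circ\tau=\sigma$ of $n$-tuples of $\mathbb Z$-maps. My plan is to bootstrap this one-sided identity into its twin $\tau\circ\sigma=\tau$, which would force $\tau_i\in A$ for every $i$ and hence $B\subseteq A$.

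From $\sigma\circ\tau=\sigma$ and the fact that $\sigma$ is the identity on $\mathsf R_\sigma$, a direct check shows that $\tau\restrict\mathsf R_\sigma\colon\mathsf R_\sigma\to\mathsf R_\tau$ is a section of $\sigma\restrict\mathsf R_\tau\colon\mathsf R_\tau\to\mathsf R_\sigma$: for $x\in\mathsf R_\sigma$, one has $\sigma(\tau(x))=\sigma(x)=x$. In particular $\tau\restrict\mathsf R_\sigma$ is an injective $\mathbb Z$-map. Now I bring in the hypothesis $A\cong B$: by the duality recalled before Theorem~\ref{theorem:finite} (via \cite[Corollaries 3.10 and 4.18]{mun11}), the rational polyhedra $\mathsf R_\sigma,\mathsf R_\tau\subseteq\I^n$ are $\mathbb Z$-homeomorphic, and so by \cite[Proposition 3.15]{mun11} (which also underlies Proposition~\ref{proposition:counting}) they contain exactly the same number of rational points of each denominator. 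Since $\mathbb Z$-maps never increase denominators, the injective $\mathbb Z$-map $\tau\restrict\mathsf R_\sigma$ sends the rational points of $\mathsf R_\sigma$ of denominator dividing $d$ into the rational points of $\mathsf R_\tau$ of denominator dividing $d$; equality of the counts forces this restriction to be a bijection on each such finite set, hence a bijection on all rational points. Density of rational points in $\mathsf R_\sigma$ together with continuity of $\tau\restrict\mathsf R_\sigma$ into the compact $\mathsf R_\tau$ then upgrades it to a bijection $\tau\restrict\mathsf R_\sigma\colon\mathsf R_\sigma\to\mathsf R_\tau$, whose unique inverse must be the $\mathbb Z$-map $\sigma\restrict\mathsf R_\tau$; both are therefore $\mathbb Z$-homeomorphisms.

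Consequently $\tau\circ\sigma$ is the identity on $\mathsf R_\tau$. For an arbitrary $z\in\I^n$, the point $\tau(z)$ lies in $\mathsf R_\tau$, so this last identity gives $\tau(\sigma(\tau(z)))=\tau(z)$; substituting $\sigma(\tau(z))=\sigma(z)$ on the left-hand side yields $\tau(\sigma(z))=\tau(z)$. Hence $\tau\circ\sigma=\tau$, each $\tau_i$ lies in $A$, and $B\subseteq A$, completing the argument. The delicate step is the bijectivity upgrade in the middle paragraph: this is precisely where the abstract isomorphism $A\cong B$ is converted (through equal denominator-counts) into the concrete geometric data needed to turn the one-sided relation $\sigma\circ\tau=\sigma$ into the two-sided relation $\tau\circ\sigma=\tau$.
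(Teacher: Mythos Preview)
Your proof is correct and follows the same overall strategy as the paper: pass to the $\mathbb Z$-retracts $\mathsf R_\sigma,\mathsf R_\tau$, observe that the inclusion $A\subseteq B$ gives a one-sided relation between them (your $\sigma\circ\tau=\sigma$, equivalently that $\tau\restrict\mathsf R_\sigma$ is a section of $\sigma\restrict\mathsf R_\tau$), and then use the abstract isomorphism $A\cong B$ to promote this to a $\mathbb Z$-homeomorphism, from which $B\subseteq A$ follows.

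The difference lies in how the promotion step is carried out. The paper sets up the dual commutative diagram and invokes three results from \cite{cab-forum}: Theorem~3.2(ii) for surjectivity of one map, Theorem~3.5 for injectivity and denominator-preservation of the other, and the Cantor--Bernstein type Theorem~3.7 to conclude that the maps are mutually inverse $\mathbb Z$-homeomorphisms. You instead give the argument directly: $A\cong B$ yields $\mathsf R_\sigma\cong_{\mathbb Z}\mathsf R_\tau$ by \cite[Corollary~3.10]{mun11}, so the two polyhedra have the same (finite) number of rational points of denominator dividing $d$ for every $d$; since $\tau\restrict\mathsf R_\sigma$ is injective and does not increase denominators, a pigeonhole count forces it to be bijective on rationals, and then compactness of $\mathsf R_\sigma$ plus density of rationals gives surjectivity onto $\mathsf R_\tau$. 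This is more self-contained---it stays entirely within the toolkit of \cite{mun11}---and in effect unpacks what the Cantor--Bernstein result of \cite{cab-forum} is doing in this special case. The paper's route is shorter to write but requires the reader to chase three external references; yours trades that for an explicit elementary argument.
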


\begin{proof} For the nontrivial direction, assume $A\cong B$
and $A\subseteq B$, with the intent of proving $A=B.$
For suitable McNaughton functions  $\sigma_1,\ldots,\sigma_n$ and
$\tau_1,\ldots,\tau_n$ with $\sigma\circ\sigma=\sigma$ and 
$\tau\circ\tau=\tau$
 we can write $A=\gen(\sigma_1,\ldots,\sigma_n)$
and $B=\gen(\tau_1,\ldots,\tau_n)$.
The restriction to $B$ of the retraction $-\circ \sigma
\colon \McNn\to A$ is a retraction of $B$ onto $A$, and we have
a commutative diagram

$$
\begin{CD}
B@>(-\circ\sigma)\restrict B>> A\\
      @A {\rm id} AA  @A{\rm id}AA \\
B@<<{\rm inclusion}<A
\end{CD}
$$

\bigskip

\noindent
Dually,   \cite{marspa}, 
 we get the commutative diagram
\begin{equation}
\label{equation:observe-bis}
\begin{CD}
\mathsf{R}_\tau@<\epsilon<<\mathsf{R}_\sigma\\
      @V {\rm id} VV  @V{\rm id}VV \\
\mathsf{R}_\tau@>\delta>>\mathsf{R}_\sigma\\
\end{CD}
\end{equation}

\bigskip
\noindent
Since  $A\subseteq B$, from 
\cite[Theorem 3.2(ii)]{cab-forum} 
it follows that  $\delta$  is
 onto $\mathsf{R}_\sigma$.
By  \cite[Theorem 3.5]{cab-forum},   $\epsilon$ is one-one and
preserves  denominators.
Since   $A\cong B$,  by   the (Cantor-Bernstein)
theorem  \cite[Theorem 3.7]{cab-forum}, 
$\epsilon$  is a 
$\mathbb Z$-homeomorphism of  $ \mathsf{R}_\sigma$
onto $\mathsf{R}_\tau$, 
 $$
 \epsilon\colon  \mathsf{R}_\sigma \cong_{\mathbb Z}  \mathsf{R}_\tau. 
 $$
 Now from \eqref{equation:observe-bis} it follows that
 $\delta$ and $\epsilon$ are inverses of each other, whence
 $$
  \delta\colon  \mathsf{R}_\tau \cong_{\mathbb Z}  \mathsf{R}_\sigma.
 $$ 
Therefore, 
 the inclusion map of $A$ into $B$
  (which is the dual of  $\delta$) 
  is surjective,  and 
$A=B.$  
\end{proof}

The following result is a special case of
 \cite[Theorem 4.6]{cabmun-jlc}. 
 We have included it here because of its simple proof.

\begin{proposition}
Let $A$ be a
 {\rm separating} retract of $\McNn$, in the sense
 that for all distinct $x,y\in \I^n$ there is $f\in A$ with
 $f(x)\not=f(y).$
 Then $A$ coincides with $\McNn.$
\end{proposition}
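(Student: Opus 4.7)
The plan is to pull back the separation hypothesis to the dual $\mathbb Z$-retraction and exploit idempotence. Since $A$ is a retract of $\McNn$, by \eqref{equation:correspondence} there is a $\mathbb Z$-retraction $\sigma = (\sigma_1,\ldots,\sigma_n)$ of $\I^n$ such that $A = \gen(\sigma_1,\ldots,\sigma_n) = \range(\McN_\sigma) = \range(-\circ\sigma)$. Thus every $f \in A$ has the form $f = g\circ \sigma$ for some $g \in \McNn$.

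First I would observe that the separation condition forces $\sigma$ to be injective on $\I^n$. Indeed, if $x,y\in \I^n$ satisfy $\sigma(x)=\sigma(y)$, then for any $f=g\circ\sigma\in A$,
\begin{equation*}
f(x)=g(\sigma(x))=g(\sigma(y))=f(y),
\end{equation*}
contradicting the hypothesis that $A$ separates the distinct points $x$ and $y$ (unless $x=y$). Hence $\sigma$ is one-one on $\I^n$.

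Next I would combine injectivity with the idempotence identity $\sigma\circ\sigma=\sigma$. For every $x\in\I^n$ the equation $\sigma(\sigma(x))=\sigma(x)$, together with the injectivity just established, yields $\sigma(x)=x$. Thus $\sigma=\id_{\I^n}$, so $\mathsf R_\sigma=\I^n$ and $\sigma_i=\pi_i$ for each $i=1,\dots,n$. Therefore
\begin{equation*}
A=\gen(\sigma_1,\ldots,\sigma_n)=\gen(\pi_1,\ldots,\pi_n)=\McNn,
\end{equation*}
as desired.

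There is really no obstacle here: the whole argument is a two-line consequence of the correspondence $\ret\leftrightarrow \zret$ of \eqref{equation:correspondence}, together with the elementary fact that an idempotent injection on a set is the identity. The only point worth stressing in the write-up is the explicit description of elements of $A$ as compositions $g\circ\sigma$, which is what converts the algebraic separation hypothesis into the geometric statement that $\sigma$ is injective.
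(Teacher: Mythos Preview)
Your proof is correct and follows essentially the same approach as the paper's: both pass to the dual $\mathbb Z$-retraction $\sigma$, use that every $f\in A$ has the form $g\circ\sigma$, and observe that $r$ and $\sigma(r)$ cannot be separated by $A$ when they differ. Your phrasing via ``injective idempotent $\Rightarrow$ identity'' is a mild streamlining of the paper's contrapositive argument (which unnecessarily invokes a rational point of high denominator outside $\mathsf R_\sigma$), but the underlying idea is identical.
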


\begin{proof}
By hypothesis we have a retraction  $\epsilon$ of $\McNn$
onto  $A.$
Letting  $\sigma_i=\epsilon(\pi_i),\,\,\,(i=1,\dots,n),\,$
and recalling the notational stipulations in the
introductory part of Section \ref{section:introductory}, the 
  retraction  $\epsilon$  determines the
$\mathbb Z$-retraction  $\zret_\epsilon=
\sigma=(\sigma_1,\dots,\sigma_n)\colon \I^n\to \I^n$,
 and we can write  $A=\gen(\sigma_1,
\ldots,\sigma_n)$.   
 If $\mathsf{R}_\sigma=\I^n$
we are done, and $\sigma$ is  identity on $\I^n$. 
If $\mathsf{R}_\sigma$ is strictly contained in $\I^n$ then some
rational point $r \in \I^n$ of sufficiently high denominator
does not belong to $\mathsf{R}_\sigma$. 
Since  $\sigma(r)$ lies in $\mathsf{R}_\sigma$, necessarily
  $\sigma(r)\not=r.$ Since
$\sigma(r)=\sigma(\sigma(r))$ then for each $f\in A$
we must have  $f(r)=f(\sigma(r))$, because
$f$ has the form  $g\circ \sigma$ for some  $g\in \McNn.$ 
We conclude that $A$ is not a separating subalgebra of $\McNn.$
\end{proof}



\begin{proposition} 
\label{proposition:incomparable}
For  any two  $\mathbb Z$-retractions  $\sigma\not=\tau$
of $\I^n$ with equal range,  the retracts $A_\sigma = \gen(\sigma_1,\ldots,\sigma_n)$ 
and $A_\tau = \gen(\tau_1,\ldots,\tau_n)$ are  (isomorphic and)
 incomparable. 
\end{proposition}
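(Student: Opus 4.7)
The plan is to prove the two assertions—isomorphism and incomparability—separately, then glue them with Theorem \ref{theorem:criterion}. That theorem says that two comparable isomorphic retracts of $\McNn$ must coincide, so its contrapositive reduces our task to showing that $A_\sigma \cong A_\tau$ while $A_\sigma \neq A_\tau$ as subalgebras of $\McNn$.

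For the isomorphism, I would set $P = \mathsf R_\sigma = \mathsf R_\tau$ and exhibit a canonical isomorphism $A_\sigma \cong \McN(P)$ (and, by symmetry, $A_\tau \cong \McN(P)$). Specifically, the restriction map $f \mapsto f\restrict P$ carries $A_\sigma$ into $\McN(P)$. Surjectivity: any $g \in \McN(P)$ extends by \cite[Proposition 3.2]{mun11} to some $\bar g \in \McNn$, and then $\bar g \circ \sigma \in A_\sigma$ restricts to $g$ on $P$ because $\sigma$ fixes $P$ pointwise. Injectivity: if $h \in A_\sigma$ vanishes on $P$, writing $h = f \circ \sigma$ and using $\sigma(\I^n) \subseteq P$, we get $h(x) = f(\sigma(x)) = 0$ for all $x \in \I^n$. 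Therefore $A_\sigma \cong \McN(P) \cong A_\tau$.

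For the non-equality, I would invoke Theorem \ref{Theorem:bella}(a) with $\sigma$ as the fixed $\mathbb Z$-retraction. The theorem provides a \emph{bijection} between $\mathbb Z$-retractions $\tau'$ satisfying $\gen(\tau'_1,\dots,\tau'_n) = \gen(\sigma_1,\dots,\sigma_n)$ and certain rational polyhedra in $\I^n$, given by $\tau' \mapsto \mathsf R_{\tau'}$. Suppose, toward a contradiction, $A_\sigma = A_\tau$. Then $\tau$ lies in the domain of that bijection alongside $\sigma$, and both map to the same polyhedron $\mathsf R_\sigma = \mathsf R_\tau = P$. Injectivity forces $\sigma = \tau$, contrary to the hypothesis. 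Hence $A_\sigma \neq A_\tau$, and combining with the isomorphism above and Theorem \ref{theorem:criterion} yields incomparability.

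I do not anticipate a serious obstacle; the only subtlety is to invoke exactly the injectivity half of Theorem \ref{Theorem:bella}(a), and to note that the restriction map realizes the isomorphism with $\McN(P)$ precisely because $\sigma$ is idempotent with range $P$ (so values of any $f \circ \sigma$ are determined by $f\restrict P$).
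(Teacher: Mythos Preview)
Your proof is correct, but it takes a different route from the paper's.

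For the isomorphism, the paper simply cites \cite[Corollary 3.10]{mun11}; your explicit verification via the restriction map $f\mapsto f\restrict P$ is fine and amounts to the content of \cite[Lemma 3.6]{mun11}.

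For incomparability, you go through non-equality plus Theorem~\ref{theorem:criterion}: using the injectivity of $\tau'\mapsto \mathsf R_{\tau'}$ from Theorem~\ref{Theorem:bella}(a) you get $A_\sigma\neq A_\tau$, and then the contrapositive of Theorem~\ref{theorem:criterion} (comparable $+$ isomorphic $\Rightarrow$ equal) yields incomparability. The paper instead gives a short direct computation: assuming $A_\tau\subseteq A_\sigma$, one writes $\tau=\chi\circ\sigma$ for some $\mathbb Z$-map $\chi$, observes that $\chi$ must act as the identity on $\mathsf R_\sigma=\mathsf R_\tau$ (since both $\sigma$ and $\tau$ do), and hence $\chi\circ\sigma=\sigma$ everywhere because $\sigma(\I^n)\subseteq\mathsf R_\sigma$; this forces $\tau=\sigma$.

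The trade-off: your argument is slicker in that it reuses machinery already in place, but it leans on Theorem~\ref{theorem:criterion}, whose proof invokes a Cantor--Bernstein type result from \cite{cab-forum}. The paper's argument is elementary and self-contained, and in fact derives the stronger statement that a \emph{one-sided} inclusion $A_\tau\subseteq A_\sigma$ already forces $\sigma=\tau$, without first appealing to isomorphism.
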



\begin{proof} 
Isomorphism immediately follows from 
 \cite[Corollary 3.10]{mun11}. Concerning
incomparability, the
 solutions of the equation
$$
\xi\circ\sigma=\sigma
$$
in the unknown  $\xi=(\xi_1,\ldots,\xi_n)\colon \I^n\to \I^n,
\,\,\,\xi_i\in \McNn,$ are
precisely those elements of  $ (\McNn)^n$ which act identically 
on the range  $\mathsf R_\sigma$.
If  $A_\tau$ is a subalgebra of
$A_\sigma$   then for some  $\chi
=(\chi_1,\dots,\chi_n)$  with $\chi_i\in\McNn$ we have
$\chi\circ\sigma=\tau$, because  $\{\sigma_1,\dots,
\sigma_n\}$ is a generating set of   $A_\sigma.$
Over the polyhedron 
 $\mathsf{R}_\sigma=\mathsf{R}_\tau$, the function $\chi$  must act
identically, because so do $\sigma$ and $\tau.$ 
Similarly, for each  $y\in \I\setminus \mathsf{R}_\sigma$ 
the point
$\sigma(y)$ lies  in $\mathsf{R}_\sigma$. We have proved the identity 
$(\chi\circ\sigma)(y)=\sigma(y)$ for all $y\in \I^n$.
From  $\chi\circ\sigma=\sigma$ and  $\chi\circ\sigma=\tau$
we get  $\sigma=\tau.$ 
\end{proof}

\bigskip

 \noindent
For any fixed  
   $\mathbb Z$-retract $P\subseteq \I^n$  the set
 $\Omega_P$  of MV-algebras  is defined by 
$$
\Omega_P=\{\gen(\sigma_1,\ldots,\sigma_n)\mid \sigma \mbox{ any possible 
$\mathbb Z$-retraction of $\I^n$ onto $P$} \}.
$$
By duality, any two algebras in $\Omega_P$ are isomorphic.  

%

\begin{proposition}  In general, the intersection of two  
MV-algebras in
 $\Omega_P$  need not be in  $\Omega_P$.
 The smallest MV-algebra containing two MV-algebras in 
  $\Omega_P$  need not be in  $\Omega_P$. 
\end{proposition}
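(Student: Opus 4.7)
The plan is to reduce both statements to a uniform application of Theorem~\ref{theorem:criterion}, after exhibiting a rational polyhedron $P\subseteq \I^n$ for which $\Omega_P$ contains at least two distinct elements. The natural candidate is $n=1$ with $P=[0,1/2]$: first I would write down two distinct $\mathbb Z$-retractions $\sigma,\tau\colon \I\to \I$, both with range $P$ --- the reflection-type map $\sigma(x)=x\wedge\neg x$, and a second retraction defined by a different piecewise-linear $\mathbb Z$-map on $[1/2,1]$ (for instance, sending $[1/2,2/3]$ linearly onto $[0,1/2]$ by $x\mapsto 2-3x$ and then crushing $[2/3,1]$ to $0$), taking care to keep all slopes integral and to match values at the break-points. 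A direct check shows $\sigma(3/4)=1/4\neq 0=\tau(3/4)$, so $\sigma\neq\tau$, and Proposition~\ref{proposition:incomparable} then produces distinct (indeed incomparable) elements $A:=\gen(\sigma)$ and $B:=\gen(\tau)$ of $\Omega_P$, both isomorphic to $\McN(P)$.

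For the intersection claim I would argue by contradiction. If $A\cap B\in \Omega_P$, then by the definition of $\Omega_P$ the algebra $A\cap B$ is a retract of $\McNn$ isomorphic to $\McN(P)$, and hence isomorphic to $A$. Since $A\cap B\subseteq A$ the two retracts are comparable and isomorphic, so Theorem~\ref{theorem:criterion} forces $A\cap B=A$; by symmetry $A\cap B=B$, contradicting $A\neq B$. The join statement follows by exactly the same template: letting $C$ be the subalgebra of $\McNn$ generated by $A\cup B$, if $C\in\Omega_P$ then $C$ is a retract of $\McNn$ with $C\cong\McN(P)\cong A$ and $A\subseteq C$, whence $A=C$ by Theorem~\ref{theorem:criterion}, and symmetrically $B=C$, again contradicting $A\neq B$.

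The only real obstacle is the explicit construction in the first step: the second $\mathbb Z$-retraction onto $[0,1/2]$ must have every linear piece with integer coefficients, which constrains the admissible slopes and break-points quite tightly, so some trial is needed to avoid accidentally re-producing $\sigma$. Once two distinct members of $\Omega_P$ are in hand, the intersection and join claims are short uniform corollaries of Theorem~\ref{theorem:criterion}, and the proposition follows.
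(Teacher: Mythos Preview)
Your argument is correct and takes a genuinely different route from the paper's proof. Both proofs begin with the same pair of $\mathbb Z$-retractions of $\I$ onto $[0,1/2]$ --- indeed your $\tau$ coincides with the paper's --- but then diverge. For the intersection, the paper computes directly: functions in $A_\sigma\cap A_\tau$ are symmetric about $1/2$ and constant on $[2/3,1]$, hence constant on $[0,1/3]$, so the intersection lacks a maximal quotient isomorphic to $\Gamma(\mathbb Z\tfrac13,1)$, which every member of $\Omega_{[0,1/2]}$ possesses. For the join, the paper shows that the range of $(\sigma,\tau)\colon\I\to\I^2$ is the perimeter of a quadrangle, so $\gen(\sigma,\tau)\cong\McN(R)$ has a non--simply-connected maximal spectrum and is not even projective. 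You instead dispose of both statements uniformly via Theorem~\ref{theorem:criterion}: if either $A\cap B$ or the join $C$ lay in $\Omega_P$, it would be a retract of $\McN(\I)$ isomorphic to and comparable with both $A$ and $B$, forcing $A=B$ against Proposition~\ref{proposition:incomparable}. Your approach is shorter and exploits existing machinery more efficiently; the paper's approach yields more structural information, identifying \emph{why} the intersection and join fail (a missing quotient, a topological obstruction to projectivity) rather than merely that they do.
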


\begin{proof}  For both statements we have examples already for
 $n=1$.

For the first statement, let  $\sigma=\pi_1\wedge \neg \pi_1$.
Then  
the map $f\mapsto f\circ \sigma$ amounts to  taking the mirror image
of the first half of $f$. 
  Let now  $\tau\colon \I\to \I$
act identically  on  the interval $[0,1/2]$, then descend to 0
with slope  $-3$, and finally vanish over  $[2/3,1]$.  
 All functions   $f\in A_\sigma\cap A_\tau$
are symmetric around the axis  $y=1/2$, and are constant
over the interval $[2/3,1]$, so they are also constant
over the interval $[0,1/3].$  As a consequence,
$A_\sigma\cap A_\tau$ does not have a maximal
quotient isomorphic to 
$\Gamma(\mathbb Z\frac{1}{3},1).$
By \cite[Lemma 3.6]{mun11}, every MV-algebra
  $A$  in $\Omega_{[0,1/2]}$ is isomorphic to
$\McN([0,1/2])$. So, in particular,  $A$ has a maximal
quotient
isomorphic to 
$\Gamma(\mathbb Z\frac{1}{3},1).$
So $A_\sigma\cap A_\tau
\notin \Omega_{[0,1/2]}.$

For the second statement take  
two different  $\mathbb Z$-retractions
$\sigma,\tau$ of $\I$ onto the the same  range 
$[0,q] \subseteq \I$.  The interval $[0,q]$
is a $\mathbb Z$-retract of $\I$.
Every  MV-algebra  in $\Omega_{[0,q]}$ is
 isomorphic to $A_\sigma$ and hence it is projective.
By duality we
can write   $A_\sigma\cong A_\tau\in \Omega_{[0,q]}.$
Now for definiteness assume both $\sigma$ and $\tau$
to have exactly three linear
pieces. 
We {\it claim} that
  the range $R$ of the map
$(\sigma,\tau)\colon \I\mapsto \I^2$ is not simply connected.
As a matter of fact, let us proceed along the trajectory
$t\in [0,q]\mapsto (\sigma(t),\tau(t))
\in \I^2$ starting from the $(0,0)$ at time  $t=0$.
Then we go up
along the diagonal $x_1=x_2$ of $\I^2$ until, at time  $t=q$,
we reach 
the  point  $(q,q)$;   we  then go down until we reach,
say,  the $x$-axis, and finally move leftward until we
reach the origin, at time $t=1.$
The resulting piecewise linear curve  $R=\range(\sigma,\tau)$ 
is the perimeter of a quadrangle, whence it is not simply connected.
Our claim is settled.

Let  $\gen(\sigma,\tau)$ denote the subalgebra of
$\McN(\I)$ generated by $\sigma$ and $\tau$.
This is the smallest MV-algebra containing  $A_\sigma\cup A_\tau.$
By \cite[Lemma 3.6]{mun11} we  have 
the isomorphism  $\gen(\sigma,\tau)\cong \McN(R)$.
By \cite[Corollary 4.18]{mun11}, the maximal spectrum
of  $\gen(\sigma,\tau)$ 
is homeomorphic to $R$, so it is 
not simply connected, and $\gen(\sigma,\tau)$  is not projective.
We conclude that $\gen(\sigma,\tau)\notin \Omega_{[0,q]}.$ 
\end{proof}

\section{Decision problems for projective algebras}

Unless otherwise specified, all MV-terms in this section
are  in the same  variables $X_1,\ldots,X_n.$
We use the adjective ``decidable'' (resp., ``computable'') as an abbreviation
of ``Turing decidable'' (resp., ``Turing computable'').

\begin{proposition}
\label{proposition:idempotence}
The following problem is decidable:

\smallskip
\noindent
${\mathsf{INSTANCE}:}$  MV-terms  $t_1,\ldots,t_n$.

\smallskip
\noindent
${\mathsf{QUESTION}:}$
Is the map  $\hat t=(\hat t_1,\ldots,\hat t_n)$ a
$\mathbb Z$-retraction of $\I^n$? 
\end{proposition}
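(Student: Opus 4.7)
The plan is to reduce the question to the decidability of the word problem for free MV-algebras (equivalently, the tautology problem of \luk\ infinite-valued propositional logic), which is well known to be decidable; see \cite[\S 4.6]{cigdotmun}.

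First, I observe that for any MV-terms $t_1,\ldots,t_n$ in the variables $X_1,\ldots,X_n$, the map $\hat{t}=(\hat{t}_1,\ldots,\hat{t}_n)\colon\I^n\to\I^n$ is automatically a $\mathbb{Z}$-map: each component $\hat{t}_i$ is a McNaughton function, hence $\I$-valued and piecewise linear with integer coefficients, and the number of linear pieces is finite. Therefore $\hat{t}$ is a $\mathbb{Z}$-retraction of $\I^n$ if and only if it is idempotent, that is,
\[
\hat{t}_i\circ\hat{t}=\hat{t}_i\qquad(i=1,\ldots,n).
\]

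Second, from the input  $t_1,\ldots,t_n$ I effectively construct, by syntactic substitution, the MV-terms
\[
u_i = t_i(t_1/X_1,\ldots,t_n/X_n)\qquad (i=1,\ldots,n).
\]
Since the evaluation map $q\mapsto\hat{q}$ is a homomorphism from the term algebra into $\McNn$, substitution of terms corresponds to composition of the associated McNaughton functions, whence $\hat{u}_i = \hat{t}_i\circ\hat{t}$. Consequently, the idempotence condition becomes the conjunction of the $n$ MV-equations
\[
u_i = t_i\qquad(i=1,\ldots,n),
\]
interpreted in the free MV-algebra $\McNn$ (equivalently, in the standard MV-algebra $\I$).

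Third, I invoke the decidability of the equational theory of MV-algebras: given any two MV-terms $p,q$ (in the same variables), it is Turing decidable whether $\hat p=\hat q$ in $\McNn$, equivalently whether $p\leftrightarrow q$ is a theorem of \luk\ infinite-valued logic. Applying this decision procedure to each pair $(u_i,t_i)$ for $i=1,\ldots,n$ and taking the conjunction of the $n$ outputs yields the desired algorithm. There is no real obstacle here; the only point worth checking is the (routine) fact that syntactic substitution of terms agrees with composition of McNaughton functions, which is immediate from the inductive definition of $\hat{\,\cdot\,}$.
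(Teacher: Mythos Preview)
Your proof is correct and follows essentially the same approach as the paper: reduce idempotence of $\hat t$ to checking, for each $i$, whether $t_i\leftrightarrow t_i(t_1,\ldots,t_n)$ is a tautology in infinite-valued \luk\ logic, and then invoke the decidability of that problem. You spell out in more detail why $\hat t$ is automatically a $\mathbb Z$-map and why syntactic substitution corresponds to composition of McNaughton functions, but the argument is the same.
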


\begin{proof} Checking the idempotency property $\hat t\circ\hat t=\hat t$ amounts to deciding whether   the MV-term $t_i\leftrightarrow t_i\circ t$ is a tautology
in infinite-valued \luk\ logic  $(i=1,\ldots,n)$. The latter problem is decidable,
\cite[Corollary 4.5.3]{cigdotmun}.
\end{proof}

The foregoing  innocent looking result should be contrasted with the
following:

\begin{proposition}
When a rational polyhedron   $R\subseteq \I^n$  is presented as a union of
rational simplexes in $\I^n$, or even by a rational triangulation, checking whether
$R$ is a $\mathbb Z$-retract is not
a decidable problem.
\end{proposition}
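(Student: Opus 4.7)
The plan is to reduce the Adian--Rabin undecidability of the triviality problem for finitely presented groups to the $\mathbb{Z}$-retract recognition problem, using the characterization of \cite[Theorem 5.1]{cabmun}: $R\subseteq\I^n$ is a $\mathbb{Z}$-retract of $\I^n$ iff $\McN(R)$ is projective. Since every $\mathbb{Z}$-retract is in particular a topological retract of the contractible space $\I^n$, it is necessarily contractible, hence simply connected; this is the topological obstruction I would exploit to drive the reduction.

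Given a finite group presentation $\pi=\langle x_1,\dots,x_g\mid r_1,\dots,r_k\rangle$, I would first build the standard $2$-complex $K_\pi$ (a wedge of $g$ circles with $k$ disks attached along the relator words), whose fundamental group is the group $G_\pi$ presented by $\pi$. Next, I would PL-embed $K_\pi$ with rational vertices into some fixed $\I^N$ (taking $N\geq 5$ so that all finite $2$-complexes embed), subdivide rationally to obtain a strongly regular triangulation, and thicken if necessary---e.g.\ to a regular neighbourhood in $\mathbb{R}^N$, or by taking a product with $[0,1]$---so that topological contractibility of the underlying polyhedron is upgraded to PL collapsibility. The resulting set $R_\pi$ is presented as an explicit union of rational simplexes, is strongly regular by construction, and satisfies $\pi_1(R_\pi)\cong G_\pi$.

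The reduction would then assert: $R_\pi$ is a $\mathbb{Z}$-retract iff $G_\pi$ is trivial. If $G_\pi$ is nontrivial, $R_\pi$ fails simple connectedness, hence is not contractible, hence is not a $\mathbb{Z}$-retract. If $G_\pi$ is trivial, the thickening step promotes simple connectedness of $R_\pi$ to PL collapsibility; combined with strong regularity, the full topological-combinatorial characterization from \cite{cabmun} then yields a $\mathbb{Z}$-retraction of $\I^N$ onto $R_\pi$. Since triviality of $G_\pi$ is undecidable by Adian--Rabin, so is $\mathbb{Z}$-retractability of a rational polyhedron presented as a union of rational simplexes.

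The main obstacle is the ``if'' direction of the reduction: engineering the thickening and the triangulation of $K_\pi$ uniformly in $\pi$ so that triviality of $G_\pi$ does not merely produce a topological retract of $\I^N$ but delivers a bona fide $\mathbb{Z}$-retract certificate in the sense of \cite{cabmun}. This requires careful control over strong regularity at every simplex and over the PL-collapsing sequence witnessing projectivity; the opposite direction (nontriviality $\Rightarrow$ non-$\mathbb{Z}$-retract) is essentially free from the contractibility obstruction.
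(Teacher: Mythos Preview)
Your reduction has a genuine gap in the ``if'' direction, and it is not the one you flagged. You write that when $G_\pi$ is trivial ``the thickening step promotes simple connectedness of $R_\pi$ to PL collapsibility.'' This is false: a regular neighbourhood (or a product with $[0,1]$) is homotopy equivalent to the complex it thickens, so it cannot improve the homotopy type. And triviality of $G_\pi$ does \emph{not} force the standard $2$-complex $K_\pi$ to be contractible. For a trivial example, the presentation $\langle x\mid x,\,x\rangle$ has trivial group, but $K_\pi$ is a circle with two disks attached, i.e.\ a $2$-sphere up to homotopy; its thickening has $H_2\neq 0$ and is not contractible, hence not a $\mathbb Z$-retract. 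In general the presentation $2$-complex of a trivial group can carry arbitrary $H_2$, and getting rid of this obstruction uniformly in $\pi$ runs straight into Andrews--Curtis--type difficulties. So as stated the equivalence ``$R_\pi$ is a $\mathbb Z$-retract $\Leftrightarrow$ $G_\pi$ is trivial'' fails, and your reduction does not go through.

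The paper sidesteps this entirely. Rather than manufacturing polyhedra with prescribed $\pi_1$ and then fighting to certify $\mathbb Z$-retractability in the trivial case, it uses the full characterization of $\mathbb Z$-retracts from \cite{cabmun,cab}: a rational polyhedron $R\subseteq\I^n$ is a $\mathbb Z$-retract iff it is contractible and satisfies two further conditions---meeting a vertex of $\I^n$, and admitting a strongly regular triangulation---both of which are decidable. Hence a decision procedure for $\mathbb Z$-retractability would yield one for contractibility of rational polyhedra, contradicting the classical undecidability of contractibility (see \cite[p.~242]{sti}). The point you missed is that the extra arithmetic conditions separating ``$\mathbb Z$-retract'' from ``contractible'' are themselves decidable, so one can reduce \emph{from} contractibility rather than \emph{to} it, and never has to build a single explicit $\mathbb Z$-retraction.
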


\begin{proof} 
As is well known, 
 $R$ is contractible iff it is a retract of $\I^n$, \cite[Proposition 5.1]{cabmun}. 
 Using both  directions of the characterization
 theorems
 of  $\mathbb Z$-retracts,
  (respectively in  \cite{cabmun} and \cite{cab})
 it follows that 
 $R$ is a $\mathbb Z$-retract  iff it is contractible and
 satisfies the following two conditions:
 \begin{itemize}
 \item[(i)]
 $R$ has a nonempty intersection with the set of vertices of
 $\I^n$;
  \item[(ii)] $R$ has a {\it strongly regular} triangulation i.e., \cite[Definition 4.1]{cabmun}
a  regular triangulation $\Delta$ such that the greatest common divisor
  of the vertices of each maximal simplex of $\Delta$ is equal to 1.
\end{itemize}
Property  (i) is trivially decidable.
Also   (ii) is decidable, because  it is equivalent to
to the strong regularity of
  every regular triangulation of $R$. 
  
By way of contradiction, assume the $\mathbb Z$-retract  problem 
is decidable.  Then
we can decide the contractibility of rational polyhedra
 in $\I^n$, whence the contractibility  of rational polyhedra in
 $\mathbb R^n$ would be a decidable problem. This
 contradicts  \cite[p.242]{sti}.
 \end{proof}

\begin{proposition}
\label{proposition:closed-domain}
The following problem is decidable:

\smallskip
\noindent
${\mathsf{INSTANCE}:}$  MV-terms  $t_1,\ldots,t_n$ such that
the map  
$\hat t \colon \I^n \to \I^n$ is 
idempotent  (a decidable hypothesis,
  by Proposition \ref{proposition:idempotence}).
 Let\/  $\mu_A$ 
denote  the maximal spectrum   of the MV-algebra
$A\subseteq \McNn$ generated by  \, $\hat t_1,\ldots,\hat t_n$.

\smallskip
\noindent
${\mathsf{QUESTION}:}$
Is   $\mu_A$ homeomorphic to a closed domain in $\I^n$?
\end{proposition}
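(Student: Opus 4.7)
The plan is to reduce the question to the effectively checkable property that every maximal simplex in some triangulation of $P = \mathsf R_{\hat t}$ has dimension $n$. First, since $\hat t$ is idempotent, its range $P$ is a $\mathbb Z$-retract of $\I^n$, and $A = \gen(\hat t_1,\dots,\hat t_n)\cong \McN(P)$ by \cite[Lemma 3.6]{mun11}; the maximal spectrum $\mu_A$ is then canonically homeomorphic to $P$ by \cite[Corollary 4.18]{mun11}. As explained in the paragraph preceding Theorem~\ref{theorem:finite}, the ``closed domain in $\I^n$'' condition is a $\mathbb Z$-homeomorphism invariant of the chosen $\mathbb Z$-retract realising $A$, so the question reduces to deciding whether $P = \cl(\interior(P))$ holds in $\I^n$.

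Combining Theorem~\ref{theorem:finite} with Theorem~\ref{theorem:converse} (i.e., Corollary~\ref{corollary:criterion}), $P$ fails to be a closed domain in $\I^n$ precisely when some (equivalently, every) triangulation of $P$ contains a maximal $m$-simplex with $m<n$. Thus it suffices to construct one triangulation $\Delta$ of $P$ and inspect the dimensions of its inclusion-maximal simplices; the algorithm answers \textsf{yes} iff all of them are $n$-dimensional.

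First I would, using McNaughton's theorem and the standard effective machinery of \cite[Corollary 2.10, Proposition 3.2]{mun11}, compute a rational (regular) triangulation $\Sigma$ of $\I^n$ over which each $\hat t_i$ is linear with integer coefficients. For each simplex $S \in \Sigma$, the image $\hat t(S)$ is then a rational simplex whose vertices are explicitly computable from the integer coefficients of $\hat t \restrict S$. The union of these images equals $P$, and a rational common refinement $\Delta$ of them is effectively computable by routine intersection-and-subdivision of rational polyhedra; if desired, Farey blow-ups turn $\Delta$ into a regular triangulation without affecting the dimensions of maximal simplices. Finally, enumerate the simplices of $\Delta$, identify the inclusion-maximal ones, and compare their dimensions with $n$.

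The main obstacle is not conceptual but organisational: carrying out the effective computation of $\Sigma$, of the rational simplices $\hat t(S)$, and of the common refinement $\Delta$, all while keeping track of the integer homogeneous coordinates of vertices. This is entirely within the standard computable rational polyhedral toolbox already deployed elsewhere in the paper (see, e.g., Theorem~\ref{theorem:converse}), so the procedure terminates and produces the correct answer, establishing decidability.
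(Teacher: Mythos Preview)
Your proof is correct and follows the same strategy as the paper: identify $\mu_A$ with the rational polyhedron $P=\mathsf R_{\hat t}$, effectively compute a triangulation of $P$, and check whether every maximal simplex has dimension $n$. One remark: the equivalence ``$P$ is a closed domain in $\I^n$ $\Leftrightarrow$ every maximal simplex of a triangulation of $P$ is $n$-dimensional'' is an elementary fact about polyhedra, and the paper simply states it directly (citing \cite[Lemma 18.1]{mun11} for the computable triangulation); your detour through Theorems~\ref{theorem:finite} and~\ref{theorem:converse} to obtain one direction of this equivalence is valid but considerably heavier than needed.
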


\begin{proof} 
By \cite[Corollary 4.18]{mun11} there is a 
  homeomorphism of  $\mu_A$ onto 
the set $E=\{x\in \I^n\mid x=\hat t(x)\} = \mathsf{R}_{\hat t}. $
 The rational polyhedron $E$ can be
 computed from the input MV-terms
$t_1,\ldots,t_n$.  By \cite[Lemma 18.1]{mun11}, 
a (regular) triangulation $\nabla$ of $E$ can be
 computed.  Then
$\mu_A\cong E$ is a closed domain iff all maximal
simplexes of $\nabla$  are $n$-dimensional. This
property  is
decidable.    
\end{proof}

\begin{theorem}
\label{theorem:computable-multiplicity}
Let $\sigma=(\hat s_1,\ldots,\hat s_n)$ be 
the $\mathbb Z$-retraction of $\I^n$ determined by 
the MV-terms $s_1,\ldots,s_n$.
Let  $P=\mathsf R_\sigma$ be the range of $\sigma$, and
  $A=\gen(\hat s_1,\ldots,\hat s_n)$ be the retract of $\McNn$
  associated to $\sigma$.
If  $P$ is a closed domain,  the multiplicity $\mathsf{r}(A)$  is
computable from the input terms  $s_1,\ldots,s_n.$
\end{theorem}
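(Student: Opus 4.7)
The plan is to invoke Theorem~\ref{Theorem:bella}(b), which identifies $\mathsf{r}(A)$ with the cardinality of the (finite, by Theorem~\ref{theorem:finite}) set of rational polyhedra $Q\subseteq \I^n$ on which $\sigma\restrict Q\colon Q\cong_{\mathbb Z} P$. My task is then to enumerate such $Q$ effectively from the data $s_1,\dots,s_n$.

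First I would compute, from the terms $s_i$, the rational polyhedron $P=\mathsf R_\sigma=\{x\in\I^n\mid \sigma(x)=x\}$ together with a regular triangulation $\Delta_P$ of $P$ (via \cite[Lemma 18.1]{mun11}). Then, by iterated Farey blow-ups of an initial regular triangulation of $\I^n$, I would compute a regular triangulation $\nabla$ of $\I^n$ satisfying: (i) $\sigma$ is affine with integer coefficients on every simplex of $\nabla$; (ii) $\Delta_P$ coincides with the restriction $\nabla\cap P$; and (iii) for every $n$-simplex $T\in\nabla$ with $\sigma\restrict T$ injective, the image $\sigma(T)$ is itself an $n$-simplex of $\Delta_P$. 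All of this is constructive.

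The crux is the following structural lemma, which I would prove by combining measure and connectivity considerations of the kind used in the proof of Theorem~\ref{theorem:finite} and in Lemma~\ref{Lem:SeparationConComp}: every valid $Q$ has the form $Q=\bigcup_{S\in\Delta_P^{(n)}} T_S$, where $\Delta_P^{(n)}$ denotes the collection of $n$-simplexes of $\Delta_P$ and each $T_S$ is an $n$-simplex of $\nabla$ for which $\sigma\restrict T_S\colon T_S\cong_{\mathbb Z} S$. Indeed, $Q$ is $\mathbb Z$-homeomorphic to the closed domain $P$ and is therefore itself a closed domain, hence determined by its interior; for each $S\in\Delta_P^{(n)}$ the open set $\interior(Q)\cap\sigma^{-1}(\interior(S))$ is connected (being mapped bijectively onto the connected set $\interior(S)$) and of full Lebesgue measure equal to that of $S$; property (iii) of $\nabla$ then forces its closure to be a single $n$-simplex $T_S\in\nabla$ with $\sigma(T_S)=S$. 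Denominator preservation (\cite[Proposition 3.15]{mun11}) together with \cite[Lemma 3.7]{mun11} finally forces $\sigma\restrict T_S$ to be a $\mathbb Z$-homeomorphism onto $S$.

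Given the lemma, the algorithm is immediate and finite. Compute, for each $S\in\Delta_P^{(n)}$, the set $\mathrm{Src}(S)=\{T\in\nabla\mid \dim T=n,\ \sigma\restrict T\colon T\cong_{\mathbb Z} S\}$; enumerate all assignments $S\mapsto T_S\in\mathrm{Src}(S)$; and for each assignment decide whether the union $Q=\bigcup_S T_S$ is a genuine $\mathbb Z$-homeomorphism domain, which reduces to checking that the chosen $T_S$ have pairwise disjoint interiors and that the local inverses $(\sigma\restrict T_S)^{-1}$ and $(\sigma\restrict T_{S'})^{-1}$ glue consistently along shared faces of adjacent $S,S'$. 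Both conditions are decidable by rational linear algebra. The final count of valid assignments equals $\mathsf{r}(A)$. The main obstacle is the structural lemma—especially the claim that $Q$'s piece over each $S\in\Delta_P^{(n)}$ is confined to a single simplex of $\nabla$—but once the refinement properties (i)--(iii) of $\nabla$ are secured, this step is an application of the closed-domain hypothesis and the connectivity of $\interior(S)$.
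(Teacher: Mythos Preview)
Your approach is correct and shares the paper's overall architecture—invoke Theorem~\ref{Theorem:bella}(b), show every valid $Q$ is a union of $n$-simplexes from a computable triangulation, then search—but the two proofs differ in how much structure they impose on that triangulation. The paper takes any regular triangulation $\Delta$ of $\I^n$ linearizing $\sigma$, sets $\nabla=\{S\in\Delta:\sigma\restrict S\text{ is a }\mathbb Z\text{-homeomorphism onto }\sigma(S)\}$, proves (its Claim~2) that each valid $Q$ equals $\bigcup\{S\in\nabla:S\subseteq Q\}$, and then brute-forces over \emph{all} subsets of $n$-simplexes of $\nabla$, testing injectivity by comparing images of relative interiors and surjectivity by a Lebesgue-measure check. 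You instead demand the sharper property~(iii), which makes $\sigma$ simplicial from $\nabla$ onto $\Delta_P$; this buys you the stronger structural lemma that $Q$ decomposes as exactly one $n$-simplex $T_S$ over each $S\in\Delta_P^{(n)}$, and hence a smaller search over \emph{assignments} rather than arbitrary subsets. Two places in your sketch deserve one more sentence. First, producing a regular $\nabla$ satisfying (iii) is not a single pass of Farey blow-ups but a standard (terminating) back-and-forth making $\sigma$ simplicial relative to a suitable refinement of $\Delta_P$; ``all of this is constructive'' is true but hides real work. Second, the cleanest reason your connected open set $U=\interior(Q)\cap\sigma^{-1}(\interior(S))$ is confined to a single $\interior(T)$ is this: every $n$-simplex $T\in\nabla$ with $\interior(T)\cap U\neq\emptyset$ satisfies $\sigma(T)=S$, so $\sigma$ would carry any skeletal point of $U$ into $\partial S$, contradicting $\sigma(U)=\interior(S)$; connectivity of $U$ then forces $U\subseteq\interior(T)$, and since $\sigma$ is bijective on both $U$ and $\interior(T)$ onto $\interior(S)$, equality follows. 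Your measure remark is suggestive but does not by itself close this step.
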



\begin{proof} Given the input terms $s_1,\ldots,s_n$
the idempotency of $\sigma$ is
decidable   by Proposition \ref{proposition:idempotence}, and so is
the hypothesis that $P$ is a closed domain,
by Proposition \ref{proposition:closed-domain}.
Let us write $P=\mathsf{R}_\sigma$.
For any $\mathbb Z$-retraction $\tau=(\tau_1,\ldots,\tau_n)$ of $\I^n$ we have 
$\gen(\tau_1,\ldots,\tau_n)=\gen(\hat s_1,\ldots,\hat s_n)$ iff
$\sigma\restrict \mathsf{R}_\tau$ is a $\mathbb Z$-homeomorphism of 
$\mathsf{R}_\tau$ onto $\mathsf{R}_\sigma$. This  is proved in 
Theorem \ref{Theorem:bella}.
Let $\Delta$ be a regular triangulation of $\I^n$ 
that   linearizes  $\sigma$,
(i.e.,  $\sigma$ is linear over each simplex of $\Delta.$)
By \cite[Lemma 18.1]{mun11},
$\Delta$   is   computable from the input MV-terms $s_i.$
Let the subcomplex $\nabla\subseteq \Delta$ of simplexes be defined by
$$
\nabla=\{S\in\Delta\mid \sigma\restrict S
\mbox{ is a ${\mathbb Z}$-homeomorphism of $S$ onto } \sigma(S)\}.
$$
Also  $\nabla$ is   computable from the input MV-terms $s_i.$ 
Let   $|\nabla|$ denote the support of $\nabla$,
$$
|\nabla| = \bigcup\{S\mid S\in \nabla\}.
$$

\medskip
\noindent{\it Claim 1.} Suppose the rational polyhedron $Q\subseteq \I^n$ satisfies
$\sigma\restrict Q\colon Q\cong_{\mathbb Z} P.$  Then $Q\subseteq |\nabla|.$

As a matter of fact, since $P$ is a closed domain in $\I^n$ then so is $Q$.
Fix $x\in\interior(Q)$ together with an $n$-simplex  $S\in \Delta$ such that $x\in S.$
There is a rational simplex  $T$ satisfying  $T\subseteq Q\cap S.$ From
$\sigma\restrict Q\colon Q\cong_{\mathbb Z} P$ we get 
$\sigma\restrict T\colon T\cong_{\mathbb Z} \sigma(T)$.
Since $T$ and $S$ are $n$-simplexes and $\sigma$ is linear over $S$
(because $S\in \Delta$ and $\Delta$ linearizes $\sigma$) then
$\sigma\restrict S\colon S\cong_{\mathbb Z} \sigma(S)$.
We have thus shown  that  $\interior(Q)$ is contained in $|\nabla|$.
Since $Q$ is a closed domain and $|\nabla|$ is closed then  $Q$ is contained
in $|\nabla|$, and our claim is settled.

\medskip
We now strengthen Claim 1 as follows:

\medskip
\noindent{\it Claim 2.} Suppose the rational polyhedron $Q\subseteq \I^n$ satisfies
$\sigma\restrict Q\colon Q\cong_{\mathbb Z} P.$  Then $Q= \bigcup\{S\in \nabla
\mid S\subseteq Q\}.$

To prove this claim, again fix  $x\in\interior(Q)$. By Claim 1 there is
$S\in \nabla$ with $x\in S$. By way of contradiction suppose 
$S$ is not contained in $Q$. Then by Claim 2 in 
Theorem \ref{theorem:finite}
(using the connectedness of  $\interior(S)$)
there is $y\in \interior(S)$
satisfying $y\in Q\setminus \interior(Q).$ 
From
$\sigma\restrict Q\colon Q\cong_{\mathbb Z} P$ 
it follows that  $\sigma(y)\in P\setminus
\interior(P).$
From 
\commento{L to D: Missing $\sigma$ added.}
$\sigma\restrict S\colon S\cong_{\mathbb Z} \sigma(S)$ 
it follows that 
$\sigma(y)\in   \interior(\sigma(S))\subseteq \interior(P).$
This contradiction settles Claim 2.

\medskip
To conclude the proof, for each subset  $\mathcal S$ of $\nabla$
only consisting of $n$-dimensional simplexes,  it is decidable  whether  
$\sigma\restrict \bigcup \mathcal S$ is
a ${\mathbb Z}$-homeomorphism of 
$ \bigcup \mathcal S$ onto  $P$.  Injectivity is equivalent to the following
property:
For any two distinct $k$
simplexes $V,W\in\mathcal S$, 
from  $\relint(V)\cap\relint(W)=\emptyset$
it must follow that $\sigma(\relint(V))\cap\sigma(\relint(W))=\emptyset$.
This amounts to a routine linear algebra problem involving intersections
of rational hyperplanes in $\mathbb R^n,$  once  $V$ and $W$ are
presented as intersections of rational hyperplanes---in an effective way
as in \cite[Lemma 18.1]{mun11}.
Once the injectivity of $\sigma\restrict \bigcup \mathcal S$ has been
verified, we check 
surjectivity by
computing  the $n$-dimensional
Lebesgue measure $\lambda$ of the union of all $n$-dimensional
simplexes in $\mathcal S$.  This is computable
because  $\Delta$ is a rational (actually, a regular) triangulation.
We finally check that  $\lambda$  is equal
to the measure of  $\bigcup\{\sigma(T)\mid T \in \mathcal S\}$. This, too,
is computable, once  the set $\bigcup\{\sigma(T)\mid T \in \mathcal S\}$ 
has been equipped with a regular
triangulation.  In this way, some Turing machine can compute the
set $\Lambda= \mathcal S_1,\dots, \mathcal S_w$ of all 
subsets  $\mathcal S$ of $\nabla$ such that
$\sigma\restrict \bigcup \mathcal S$ is
a ${\mathbb Z}$-homeomorphism of 
$ \bigcup \mathcal S$ onto  $P$.  By 
Theorem \ref{Theorem:bella}, the number   of
elements in $\Lambda$  coincides with the multiplicity of $A,$
\,\,\,$w=\mathsf{r}(A).$   
\end{proof}

\begin{proposition}
\label{proposition:connected-interior}
The following problem is decidable:

\smallskip
\noindent
${\mathsf{INSTANCE}:}$  
MV-terms  $t_1,\ldots,t_n$ such that
the map   $\hat t \colon \I^n \to \I^n$ is 
idempotent, and the maximal spectral space
$\mu_A$ of  $A=\gen(\hat t_1,\ldots, \hat t_n)$
 is homeomorphic to
 a closed domain (both conditions
 being  decidable, respectively
  by Proposition \ref{proposition:idempotence}
 and \ref{proposition:closed-domain}).
 
\smallskip
\noindent
${\mathsf{QUESTION}:}$ Let  $\interior(\mu_A)$ denote the interior of $\mu_A$.  Is  
\, $\interior(\mu_A)$   connected?
\end{proposition}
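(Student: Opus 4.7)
The plan is to reduce the question to a finite combinatorial test on a triangulation of the geometric realization of $\mu_A$. By \cite[Corollary 4.18]{mun11}, $\mu_A$ is homeomorphic to the rational polyhedron $P=\mathsf{R}_{\hat t}=\{x\in\I^n\mid \hat t(x)=x\}$, and $P$ itself is computable from the input MV-terms $t_1,\ldots,t_n$ (exactly as in the proof of Proposition \ref{proposition:closed-domain}). Since we are assuming $\mu_A$ is homeomorphic to a closed domain, $P=\cl(\interior(P))$, so every maximal simplex of any triangulation of $P$ is $n$-dimensional. Therefore the problem is to decide connectedness of $\interior(P)\subseteq\I^n$.

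First I would compute, via \cite[Lemma 18.1]{mun11}, a regular triangulation $\Delta$ of $P$ all of whose maximal simplexes are $n$-dimensional. Next I would build the \emph{dual adjacency graph} $G_\Delta$ whose vertices are the $n$-simplexes of $\Delta$ and whose edges join two $n$-simplexes sharing a common $(n-1)$-face. Both $\Delta$ and $G_\Delta$ are finite and explicitly computable, and connectedness of a finite graph is trivially decidable. The claim is
\[
\interior(P) \text{ is connected} \iff G_\Delta \text{ is connected.}
\]

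For $(\Leftarrow)$, given $n$-simplexes $T,T'$ of $\Delta$ and a chain $T=T_0,T_1,\ldots,T_k=T'$ with each $T_{i-1},T_i$ sharing an $(n-1)$-face $F_i$, the set $\interior(T_0)\cup\relint(F_1)\cup\interior(T_1)\cup\cdots\cup\interior(T_k)$ is path-connected. Moreover, because $F_i$ has $n$-simplexes of $P$ on both sides, $\relint(F_i)\subseteq\interior(P)$, so the above path-connected set lies in $\interior(P)$. Since every point of $\interior(P)$ lies in $\cl(\interior(T))$ for some $n$-simplex $T\in\Delta$, this yields connectedness of $\interior(P)$. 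For $(\Rightarrow)$ I would argue the contrapositive: if $G_\Delta$ splits as a disjoint union of nonempty vertex sets $\mathcal C_1\sqcup\mathcal C_2$, let $P_j$ be the union of the $n$-simplexes in $\mathcal C_j$. No $(n-1)$-face is shared across the partition, so any $(n-1)$-face $F$ lying in $P_1\cap P_2$ has all its adjacent $n$-simplexes on one side, forcing $\relint(F)$ onto the boundary of $P$ in $\mathbb R^n$. Hence $\interior(P)$ is contained in $\interior(P_1)\sqcup\interior(P_2)$, a disjoint union of two nonempty open sets, and $\interior(P)$ is disconnected.

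The main technical obstacle is verifying the geometric claim in the converse: that once no $(n-1)$-face joins $\mathcal C_1$ to $\mathcal C_2$ in the dual graph, the relative interiors of the common faces of $P_1\cap P_2$ genuinely fall on $\partial P$ rather than $\interior(P)$. This is essentially a local argument at each face, using that $\Delta$ is a triangulation, so each $(n-1)$-face either lies in $\partial\I^n$ or is shared by exactly two $n$-simplexes of $\Delta$, and in the latter case those two $n$-simplexes would have to lie in the same $\mathcal C_j$. Everything else — computing $P$, computing $\Delta$, enumerating $(n-1)$-faces, and running graph connectedness — is standard and effective, so once the equivalence is established the decision procedure is immediate.
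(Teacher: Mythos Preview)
Your proof is correct and takes essentially the same approach as the paper: compute a triangulation of $\mathsf R_{\hat t}$, form the facet-adjacency graph on its $n$-simplexes, and test connectedness (the paper phrases this as a greedy procedure that starts from one $n$-simplex and repeatedly adjoins facet-neighbors, then checks whether every $n$-simplex is reached). Your write-up is actually more explicit than the paper's about why graph connectedness is equivalent to connectedness of $\interior(P)$; one tiny correction: an $(n-1)$-face with only one adjacent $n$-simplex lies on $\partial P$, not necessarily on $\partial\I^n$.
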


\begin{proof} Again replace  $\mu_A$ by
 its homeomorphic copy given by the rational
 polyhedron
$E=\{x\in \I^n\mid x=\hat t(x)\}
=\mathsf{R}_{\hat t}$. 
Compute a   rational triangulation  $\Delta$ of $E$.
Verify the closed domain hypothesis by checking that
all maximal simplexes of $\Delta$  are $n$-dimensional.
Call $\Delta^{(n)}$ the collection of all these $n$-simplexes,
ordered lexicographically.
Inductively, letting $X_k$ be the set of  the first $k$
simplexes of $\Delta^{(n)}$, add to  $X_k$  the first simplex of $\Delta^{(n)}$ which shares
a facet with some  simplex of $X_k$.   Denote by
$X_{k+1}$   the new set of $n$-simplexes thus obtained.
 Note that
$X_{k+1}$ has a connected interior if so does $X_{k}$.
After  $u$ steps no more  
$n$-simplexes can be added to $X_u$.  
Then check that
$X_u$ equals $\Delta^{(n)}$.    
\end{proof}


\begin{proposition}
\label{proposition:confluence}
The  following problem is decidable:

\smallskip
\noindent
${\mathsf{INSTANCE}:}$  MV-terms  
$s_1,\dots,s_n$ and $t_1,\ldots,t_n$ providing  $\mathbb Z$-retractions
$\hat s, \hat t$  of
$\I^n$  (a decidable hypothesis,
  by Proposition \ref{proposition:idempotence}).

\smallskip
\noindent
${\mathsf{QUESTION}:}$  Do these two  
$\mathbb Z$-retractions have the same range?
\end{proposition}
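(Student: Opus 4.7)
The plan is to reduce the equality of ranges to finitely many tautology checks in infinite-valued \luk\ logic, which is decidable by \cite[Corollary 4.5.3]{cigdotmun}, exactly as in the proof of Proposition \ref{proposition:idempotence}.

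First I would observe that for any $\mathbb Z$-retraction $\eta$ of $\I^n$, the range $\mathsf R_\eta$ coincides with the fixed-point set of $\eta$. Consequently, for the two given $\mathbb Z$-retractions $\hat s$ and $\hat t$, the equality $\mathsf R_{\hat s}=\mathsf R_{\hat t}$ is equivalent to the pair of identities
\[
\hat t\circ\hat s=\hat s\quad\text{and}\quad\hat s\circ\hat t=\hat t.
\]
Indeed, $\mathsf R_{\hat s}\subseteq\mathsf R_{\hat t}$ means that every point in the image of $\hat s$ is fixed by $\hat t$, i.e., $\hat t(\hat s(x))=\hat s(x)$ for all $x\in\I^n$; the reverse inclusion is symmetric.

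Next I would translate each of these functional identities into coordinate-wise equalities of McNaughton functions. The $i$-th coordinate of $\hat t\circ\hat s$ is the McNaughton function associated with the MV-term $t_i(s_1,\dots,s_n)$, so $\hat t\circ\hat s=\hat s$ is equivalent to the conjunction, over $i=1,\dots,n$, that the MV-term
\[
t_i(s_1,\dots,s_n)\leftrightarrow s_i
\]
is a tautology of infinite-valued \luk\ logic. Symmetrically, $\hat s\circ\hat t=\hat t$ is equivalent to each $s_i(t_1,\dots,t_n)\leftrightarrow t_i$ being a tautology.

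Finally, the algorithm is: for each $i=1,\dots,n$, run a decision procedure for \luk\ tautologousness on the $2n$ MV-terms above, and answer $\mathsf R_{\hat s}=\mathsf R_{\hat t}$ if and only if all $2n$ checks succeed. Each check terminates by \cite[Corollary 4.5.3]{cigdotmun}, and the whole procedure performs finitely many such calls. There is no real obstacle here; the only subtlety is the initial reformulation of the geometric equality $\mathsf R_{\hat s}=\mathsf R_{\hat t}$ as the algebraic identities $\hat t\circ\hat s=\hat s$ and $\hat s\circ\hat t=\hat t$, which sidesteps any direct polyhedral computation of the two ranges.
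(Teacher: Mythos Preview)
Your argument is correct. The key observation that $\mathsf R_{\hat s}=\mathsf R_{\hat t}$ is equivalent to the two identities $\hat t\circ\hat s=\hat s$ and $\hat s\circ\hat t=\hat t$ is valid precisely because the range of a $\mathbb Z$-retraction coincides with its fixed-point set (as recorded in Section~\ref{section:introductory}), and the reduction to $2n$ tautology checks then goes through exactly as in Proposition~\ref{proposition:idempotence}.

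Your route differs from the paper's. The paper proceeds geometrically: it observes that the rational polyhedra $\mathsf R_{\hat s}$ and $\mathsf R_{\hat t}$ can be effectively computed from the input terms, and then invokes \cite[Corollary 18.4]{mun11} to decide whether two rational polyhedra in $\I^n$ coincide. Your approach sidesteps the polyhedral computation entirely and stays purely within the \luk\ tautology framework, using only \cite[Corollary 4.5.3]{cigdotmun}. This is more self-contained and reuses the exact machinery of Proposition~\ref{proposition:idempotence}; the paper's approach, by contrast, makes the geometric content explicit and fits the pattern of later results in the section (e.g., Theorem~\ref{theorem:computable-multiplicity}) that genuinely require computing and manipulating the polyhedra.
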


\begin{proof} The ranges of $\hat s$  and $\hat t$
are computable from the input terms
$s_1,\dots,s_n$ and  
$t_1,\ldots,t_n$.
 It is decidable whether the two rational polyhedra
$\mathsf{R}_{\hat s}$ and $\mathsf{R}_{\hat t}$  coincide,
\cite[Corollary 18.4]{mun11}.
\end{proof}

%

%

\smallskip
\begin{proposition}
\label{proposition:biconfluent}
The  following problem is decidable:

\medskip
\noindent
${\mathsf{INSTANCE}:}  \,\,$MV-terms  $s_1,\ldots,s_n$, and
$t_1,\dots,t_n$   
yielding  $\mathbb Z$-retractions  $\hat{s}$
and $\hat{t}$ of
$\I^n$ with the same range, (both assumption
being decidable, 
by Propositions \ref{proposition:idempotence} 
and \ref{proposition:confluence}).

\smallskip
\noindent
${\mathsf{QUESTION}:}$  Does   the  MV-algebra   
 generated by $ \hat s_1,\ldots,\hat s_n$
coincide with  the MV-algebra  generated by 
$ \hat t_1,\ldots,\hat t_n$?
\end{proposition}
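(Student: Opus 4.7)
The plan is to reduce the equality question for subalgebras to a pointwise equality test for the defining $\mathbb{Z}$-maps. The key observation is that Proposition~\ref{proposition:incomparable} already settles the nontrivial direction: if two distinct $\mathbb{Z}$-retractions $\sigma\neq \tau$ of $\I^n$ share the same range, then $\gen(\sigma_1,\ldots,\sigma_n)$ and $\gen(\tau_1,\ldots,\tau_n)$ are incomparable, hence in particular unequal. Combining this with the trivial converse (coinciding maps generate coinciding subalgebras), under the standing hypothesis $\mathsf{R}_{\hat s}=\mathsf{R}_{\hat t}$ we obtain the equivalence
\[
\gen(\hat s_1,\ldots,\hat s_n)=\gen(\hat t_1,\ldots,\hat t_n)\iff \hat s=\hat t.
\]

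The decision procedure therefore reduces to checking, for each $i=1,\ldots,n$, whether $\hat s_i=\hat t_i$ as McNaughton functions on $\I^n$. This is equivalent to deciding whether the MV-formula $s_i\leftrightarrow t_i$ is a tautology of infinite-valued \luk\ logic, a decidable problem by \cite[Corollary 4.5.3]{cigdotmun} (already invoked in the proof of Proposition~\ref{proposition:idempotence}). The algorithm outputs $\mathsf{YES}$ iff all $n$ tautology tests succeed. Alternatively, one can effectively compute the graphs of $\hat s_i$ and $\hat t_i$ as rational polyhedra via \cite[Lemma 18.1]{mun11} and compare them directly, but this is not needed.

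I expect no genuine obstacle here: the substantive work has already been carried out in Proposition~\ref{proposition:incomparable}, which collapses an a priori delicate comparison of two finitely generated subalgebras into a direct equality check on generators. The rest is a single application of the decidability of \luk\ tautologies, with no recourse to the triangulation or polyhedral machinery used in the preceding propositions.
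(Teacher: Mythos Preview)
Your proposal is correct and follows essentially the same approach as the paper's own proof: invoke Proposition~\ref{proposition:incomparable} to reduce the subalgebra equality to the pointwise identity $\hat s=\hat t$, then decide the latter by checking that each $s_i\leftrightarrow t_i$ is a \luk\ tautology via \cite[Corollary 4.5.3]{cigdotmun}.
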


\begin{proof}
By Proposition \ref{proposition:incomparable}, the answer is
positive answer iff 
   $\hat{s}=\hat{t}$. This in turn is equivalent to
checking whether the MV-term 
$s_i\leftrightarrow t_i$ is a tautology
 for all $i=1,\ldots,n$,  which is
a decidable problem, \cite[Corollary 4.5.3]{cigdotmun}. 
\end{proof}

Dropping the  hypothesis  that $\hat s$ and $\hat t$
have the same range, the problem remains decidable,
yet  with a much subtler proof:

\smallskip
\begin{theorem}
The  following problem is decidable:

\medskip
\noindent
${\mathsf{INSTANCE}:}$
  MV-terms  $s_1,\ldots,s_n$, and
$t_1,\dots,t_n$   
determining $\mathbb Z$-retractions
of  $\I^n$
$\hat{s}=(\hat s_1,\ldots,\hat s_n)$
and $\hat{t}=( \hat t_1,\ldots,\hat t_n)$, (a 
decidable condition, by
Proposition  \ref{proposition:idempotence}).

\medskip
\noindent
${\mathsf{QUESTION}:}$  Does   the  MV-algebra   
 $A$ generated by $ \hat s_1,\ldots,\hat s_n$
coincide with  the MV-algebra $B$  generated by 
$ \hat t_1,\ldots,\hat t_n$?
\end{theorem}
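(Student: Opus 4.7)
The plan is to reduce the equality $A=B$ to a conjunction of $2n$ tautology checks in infinite-valued \luk\ logic, each of which is decidable by \cite[Corollary 4.5.3]{cigdotmun}. Since $A$ is the MV-subalgebra of $\McNn$ generated by $\hat s_1,\ldots,\hat s_n$, we have $A\subseteq B$ iff each generator $\hat s_i$ belongs to $B$, and symmetrically for $B\subseteq A$. Hence it suffices to decide, for a given McNaughton function $f\in\McNn$ and the given $\mathbb Z$-retraction $\hat t=(\hat t_1,\ldots,\hat t_n)$, whether $f\in B=\gen(\hat t_1,\ldots,\hat t_n)$.

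The crucial algebraic observation is the following. Every element of $B$ has the form $h(\hat t_1,\ldots,\hat t_n)=h\circ\hat t$ for some $h\in\McNn$, so $B=\{h\circ\hat t\mid h\in\McNn\}$. Using the idempotency $\hat t\circ\hat t=\hat t$ one then obtains the clean membership criterion
\[
f\in B \quad\Longleftrightarrow\quad f=f\circ\hat t \qquad (f\in\McNn).
\]
Indeed, if $f=h\circ\hat t$ then $f\circ\hat t=h\circ\hat t\circ\hat t=h\circ\hat t=f$; conversely, if $f=f\circ\hat t$ take $h=f$ to exhibit $f$ as an element of $B$. Specializing to $f=\hat s_i$, membership $\hat s_i\in B$ is equivalent to the pointwise identity $\hat s_i=\hat s_i\circ\hat t$ on $\I^n$.

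Now $\hat s_i\circ\hat t$ coincides with the McNaughton function associated with the MV-term $s_i(t_1,\ldots,t_n)$ obtained by substituting $t_j$ for the variable $X_j$ throughout $s_i$; this is a straightforward induction on the structure of $s_i$. Therefore $\hat s_i=\hat s_i\circ\hat t$ iff the MV-formula $s_i\leftrightarrow s_i(t_1,\ldots,t_n)$ is a tautology of infinite-valued \luk\ logic, a decidable condition by \cite[Corollary 4.5.3]{cigdotmun}. The decision procedure runs the $n$ tautology checks $s_i\leftrightarrow s_i(t_1,\ldots,t_n)$ (certifying $A\subseteq B$) together with the $n$ tautology checks $t_j\leftrightarrow t_j(s_1,\ldots,s_n)$ (certifying $B\subseteq A$), returning yes iff all $2n$ of them succeed.

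The only nontrivial step is the membership criterion $f\in B\iff f=f\circ\hat t$, whose proof rests squarely on the idempotency of $\hat t$ guaranteed by Proposition~\ref{proposition:idempotence}. Once this is in hand, no use of the polyhedral correspondence of Theorem~\ref{Theorem:bella}, nor of the geometric machinery developed in Propositions \ref{proposition:confluence}--\ref{proposition:biconfluent}, is required; the problem collapses to $2n$ applications of the decidability of \luk\ tautologies.
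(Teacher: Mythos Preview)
Your argument is correct, and it is genuinely different from---and considerably simpler than---the paper's own proof. The key step is your fixed-point characterization of the retract: since $\McN_{\hat t}=-\circ\hat t$ is a retraction of $\McNn$ onto $B=\gen(\hat t_1,\ldots,\hat t_n)$ (as recorded at the beginning of Section~\ref{section:introductory}), one has $f\in B\iff f=f\circ\hat t$. This reduces $A=B$ to $2n$ tautology checks and bypasses all geometry.

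By contrast, the paper proceeds geometrically: it first decides whether the ranges $P=\mathsf R_{\hat s}$ and $Q=\mathsf R_{\hat t}$ coincide (Proposition~\ref{proposition:confluence}); in the equal-range case it invokes Proposition~\ref{proposition:incomparable} to reduce to $\hat s=\hat t$; in the unequal-range case it appeals to Theorem~\ref{Theorem:bella} to show $A=B$ iff $\hat s\restrict Q$ is a $\mathbb Z$-homeomorphism of $Q$ onto $P$ with $\hat t=(\hat s\restrict Q)^{-1}\circ\hat s$, and then checks these conditions via regular triangulations, denominator preservation, and rational measures. Your route avoids all of this machinery. What the paper's approach buys is continuity with the polyhedral duality that drives the rest of the article (multiplicities, $\mathbb Z$-homeomorphism domains, Theorem~\ref{theorem:computable-multiplicity}); what your approach buys is a two-line proof that works uniformly without the $P=Q$ case split.
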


\begin{proof} 
Let $P=\mathsf R_{\hat s}$  be the range of $\hat s$ and 
$Q=\mathsf R_{\hat t}$ be the range of $\hat t$.
If $P$ coincides with $Q$ (a decidable condition, by
Proposition \ref{proposition:confluence}) then Proposition
\ref{proposition:biconfluent} shows that the problem
$A=B$ is decidable.  So it is sufficient to argue in case  $P\not=Q.$
We have 
\begin{equation}
\label{equation:iff}
\mbox{$A=B$ iff  
$\hat{s}\restrict Q$ is a $\mathbb Z$-homeomorphism 
of $Q$ onto $P$, and 
$\hat{t}=(\hat{s}\restrict Q)^{-1}\circ\hat{s}$.} 
\end{equation}

The $(\Rightarrow)$-direction is proved in
Theorem \ref{Theorem:bella}.
For the $(\Leftarrow)$-direction,  the hypothesis shows that
$(\hat{s}\restrict Q)\circ\hat{t}=\hat{s}$, whence
$A=\gen(\hat{s}_1,\dots,\hat{s}_n)=\gen(\rho_1,\dots,\rho_n)=B.$

Next, in order to check the right-hand side of \eqref{equation:iff} we proceed
as follows:

\begin{itemize}
\item[(i)]  Using the effective procedure
of  \cite[Corollary 2.9]{mun11}, we
compute a regular triangulation $\Lambda$
  of $Q$ such that  $\hat{s}\restrict Q$ is linear
  over each simplex of $\Lambda$. 
  In the light of the characterization of $\mathbb Z$-homeomorphisms,
  \cite[Proposition 3.15]{mun11}, we  then
 check whether 
 \begin{itemize}
 \item[---]
  each maximal simplex  $M$ of  $\Lambda$ is sent
by $\hat{s}$ onto a  regular simplex $\Lambda(M)\subseteq
P$  with preservation of the denominators of the vertices of $M$;
 \item[---]
  the relative interiors
of any two distinct simplexes $M',M''$ of  $\Lambda$
are sent to disjoint simplexes $\Lambda(M'),\Lambda(M'')$;

 \item[---] the $i$-dimensional rational measure 
\cite{mun-cpc} of  $\Lambda(Q)$ coincides with
the  $i$-dimensional rational measure of $Q$,
for each  $i=0,1,\dots,n.$
 \end{itemize}

\item[(ii)]  The  three conditions above
are necessary and sufficient for 
$\hat{s}\restrict Q$ to be a $\mathbb Z$-homeomorphism 
of $Q$ onto $P$.

\item[(iii)]
Using the    extension
argument, \cite[Theorem 5.8(ii)]{mun11} it is easy to 
compute  MV-terms  $r_1,\dots,r_n$
such that the $\mathbb Z$-map $\hat r=(\hat r_1,\dots,\hat r_n)$ coincides
with 
$(\hat{s}\restrict Q)^{-1}$ over  $P$.

\item[(iv)] The verification of the identity
$\hat{t}=(\hat{s}\restrict Q)^{-1}\circ\hat{s}$ now amounts
to checking whether the MV-term
$t_i\leftrightarrow r_i\circ(s_1,\dots,s_n)$ is a tautology in \luk\ logic for
all $i=1,\dots,n$, which, as we have seen, is decidable.
\end{itemize}
The proof is complete.
\end{proof}
%
%
%
%

\medskip
Replacing identity by isomorphism in the foregoing theorem
 we have an open problem:

\smallskip
\begin{problem} 
The  following problem is open:

\medskip
\noindent
${\mathsf{INSTANCE}:}$  
MV-terms  $s_1,\ldots,s_n$, and
$t_1,\dots,t_n$   
yielding  $\mathbb Z$-retractions
$\hat{s}$
and $\hat{t}$  of
$\I^n$,  (a decidable condition,
  by Proposition \ref{proposition:idempotence}).

\medskip
\noindent
${\mathsf{QUESTION}:}$  Is the subalgebra     of
$\McNn$ generated by   
  $\hat s_1,\ldots,\hat s_n$
isomorphic to  the subalgebra   of
$\McNn$ generated by
  $\hat t_1,\ldots,\hat t_n$?
\end{problem}

\bibliographystyle{plain}

 \end{document}